\newtheorem{algorithm}[subsection]{Algorithm}
\newtheorem{observation}[subsection]{Observation}
\newtheorem{theorem}[subsection]{Theorem}
\newtheorem*{theorem*}{Theorem}
\newtheorem{lemma}[subsection]{Lemma}
\newtheorem{corollary}[subsection]{Corollary}
\newtheorem*{conjecture*}{Conjecture}
\newtheorem*{question*}{Question}
\theoremstyle{remark}
\newtheorem{example}[subsection]{Example}
\theoremstyle{definition}
\newtheorem{definition}[subsection]{Definition}
\newcommand{\prdim}{\opname{pr.dim}\nolimits}
\newcommand{\Aus}{\opname{Aus}\nolimits}
\newcommand{\MCM}{\opname{MCM}\nolimits}
\newcommand{\krdim}{\opname{kr.dim}\nolimits}
\newcommand{\brutegeq}[1]{\beta_{\geq #1}}
\newcommand{\ie}{{\em i.e.~}\ }
\newcommand{\confer}{{\em cf.~}\ }
\newcommand{\eg}{{\em e.g.~}\ }
\newcommand{\ko}{\: , \;}
\newcommand{\ul}[1]{\underline{#1}}
\newcommand{\ol}[1]{\overline{#1}}
\newcommand{\opname}[1]{\operatorname{\mathsf{#1}}}
\newcommand{\opnamestar}[1]{\operatorname*{\mathsf{#1}}}
\renewcommand{\mod}{\opname{mod}\nolimits}
\newcommand{\fdmod}{\opname{fdmod}\nolimits}
\newcommand{\grmod}{\opname{grmod}\nolimits}
\newcommand{\proj}{\opname{proj}\nolimits}
\newcommand{\Proj}{\opname{Proj}\nolimits}
\newcommand{\Inj}{\opname{Inj}\nolimits}
\newcommand{\Mod}{\opname{Mod}\nolimits}
\newcommand{\Pcm}{\opname{Pcm}\nolimits}
\newcommand{\Lex}{\opname{Lex}\nolimits}
\newcommand{\Grmod}{\opname{Grmod}\nolimits}
\newcommand{\add}{\opname{add}\nolimits}
\newcommand{\op}{^{op}}
\newcommand{\der}{\cd}
\newcommand{\hfd}[1]{\ch_{fd}(#1)/\ac_{fd}(#1)}
\newcommand{\Gr}{\opname{Gr}\nolimits}
\newcommand{\dimv}{\underline{\dim}\,}
\renewcommand{\Im}{\opname{Im}\nolimits}
\newcommand{\Ker}{\opname{Ker}\nolimits}
\newcommand{\inprod}[1]{\langle #1 \rangle}
\newcommand{\rank}{\opname{rank}\nolimits}
\newcommand{\fpr}{\opname{fpr}\nolimits}
\newcommand{\rad}{\opname{rad}\nolimits}
\newcommand{\grproj}{\opname{grproj}}
\newcommand{\GrProj}{\opname{GrProj}}
\newcommand{\ind}{\opname{ind}}
\newcommand{\coind}{\opname{coind}}
\newcommand{\bp}{\mathbf{p}}
\newcommand{\bi}{\mathbf{i}}
\newcommand{\dgcat}{\opname{dgcat}\nolimits}
\newcommand{\Ho}{\opname{Ho}\nolimits}
\newcommand{\rep}{\opname{rep}\nolimits}
\newcommand{\Rep}{\opname{Rep}\nolimits}
\newcommand{\enh}{\opname{enh}\nolimits}
\newcommand{\eff}{\opname{eff}\nolimits}
\newcommand{\cyd}[1]{#1\mbox{-}\opname{cyd}\nolimits}
\newcommand{\pretr}{\opname{pretr}\nolimits}
\newcommand{\ex}{\opname{ex}\nolimits}
\newcommand{\ac}{\mathcal{A}c}
\newcommand{\colim}{\opname{colim}}
\newcommand{\cok}{\opname{cok}\nolimits}
\newcommand{\im}{\opname{im}\nolimits}
\renewcommand{\ker}{\opname{ker}\nolimits}
\newcommand{\obj}{\opname{obj}\nolimits}
\newcommand{\Trs}{\opname{Trs}\nolimits}
\newcommand{\tria}{\opname{tria}\nolimits}
\newcommand{\thick}{\opname{thick}\nolimits}
\newcommand{\Tria}{\opname{Tria}\nolimits}
\newcommand{\per}{\opname{per}\nolimits}
\newcommand{\StPer}{\opname{StPer}\nolimits}
\DeclareMathOperator{\Coh}{\mathsf{Coh}}
\newcommand{\Z}{\mathbb{Z}}
\newcommand{\N}{\mathbb{N}}
\newcommand{\Q}{\mathbb{Q}}
\newcommand{\C}{\mathbb{C}}
\newcommand{\fq}{{\mathbb F}_q}
\renewcommand{\P}{\mathbb{P}}
\newcommand{\T}{\mathbb{T}}
\newcommand{\F}{\mathbb{F}}
\newcommand{\G}{\mathbb{G}}
\newcommand{\I}{\mathbb{I}}
\newcommand{\ra}{\rightarrow}
\newcommand{\iso}{\stackrel{_\sim}{\rightarrow}}
\newcommand{\id}{\mathbf{1}}
\newcommand{\sgn}{\opname{sgn}}
\newcommand{\Hom}{\opname{Hom}}
\newcommand{\End}{\opname{End}}
\newcommand{\Aut}{\opname{Aut}}
\newcommand{\go}{\opname{G_0}}
\newcommand{\RHom}{\opname{RHom}}
\newcommand{\REnd}{\opname{REnd}}
\newcommand{\cHom}{\mathcal{H}\it{om}}
\newcommand{\cEnd}{\mathcal{E}\it{nd}}
\newcommand{\cGrEnd}{\mathcal{G}\it{r}\mathcal{E}nd}
\newcommand{\HOM}{\opname{Hom^\bullet}}
\newcommand{\Ext}{\opname{Ext}}
\newcommand{\Hominf}{\raisebox{0ex}[2ex][0ex]{$\overset{\,\infty}{
                              \raisebox{0ex}[1ex][0ex]{$\mathsf{Hom}$}
                                                                 }$}}
\newcommand{\sHominf}{\raisebox{0ex}[2ex][0ex]{$\scriptsize
\overset{\scriptsize \,\infty}{\raisebox{0ex}[1ex][0ex]{$\scriptsize
\mathsf{Hom}$}}$}}
\newcommand{\GL}{\opname{GL}}
\newcommand{\ten}{\otimes}
\newcommand{\lten}{\overset{\opname{L}}{\ten}}
\newcommand{\tensinf}{\overset{\infty}{\ten}}
\newcommand{\Tor}{\opname{Tor}}
\newcommand{\supp}{\opname{supp}}
\newcommand{\Cone}{\opname{Cone}}
\newcommand{\acyc}{\opname{acyc}}
\newcommand{\Tot}{\opname{Tot}}
\newcommand{\gldim}{\opname{gldim}\nolimits}
\newcommand{\ca}{{\mathcal A}}
\newcommand{\cb}{{\mathcal B}}
\newcommand{\cc}{{\mathcal C}}
\newcommand{\cd}{{\mathcal D}}
\newcommand{\ce}{{\mathcal E}}
\newcommand{\cf}{{\mathcal F}}
\newcommand{\cg}{{\mathcal G}}
\newcommand{\ch}{{\mathcal H}}
\newcommand{\ci}{{\mathcal I}}
\newcommand{\cj}{{\mathcal J}}
\newcommand{\ck}{{\mathcal K}}
\newcommand{\cl}{{\mathcal L}}
\newcommand{\cm}{{\mathcal M}}
\newcommand{\cn}{{\mathcal N}}
\newcommand{\co}{{\mathcal O}}
\newcommand{\cp}{{\mathcal P}}
\newcommand{\cq}{{\mathcal Q}}
\newcommand{\cs}{{\mathcal S}}
\newcommand{\ct}{{\mathcal T}}
\newcommand{\cu}{{\mathcal U}}
\newcommand{\cv}{{\mathcal V}}
\newcommand{\cw}{{\mathcal W}}
\newcommand{\cx}{{\mathcal X}}
\newcommand{\cy}{{\mathcal Y}}
\newcommand{\cz}{{\mathcal Z}}
\newcommand{\wmny}{W_{M,N}^Y}\newcommand{\wnmy}{W_{N,M}^Y}
\newcommand{\eps}{\varepsilon}
\newcommand{\DQ}{{\mathcal D}_Q}
\newcommand{\kos}{K_0^{split}(\cc)}
\newcommand{\tilting}{tilting }
\newcommand{\exceptional}{exceptional }
\renewcommand{\hat}[1]{\widehat{#1}}
\newcommand{\m}{\mathfrak{m}}
\newcommand{\del}{\partial}
\newcommand{\silt}{\opname{silt}}
\renewcommand{\baselinestretch}{1.1}
\numberwithin{equation}{section}
\begin{document}

\title[]{Some examples of silted algebras of Dynkin type}

\author{ Ruo-Yun Xing}
\thanks{MSC2020: 16E35, 16G20}
\thanks{Key words: silted algebra; tilted algebra; 2-term silting complexes; strictly shod algebra}
\address{
}

\email{}

\begin{abstract}

This paper  studies silted algebras, namely, endomorphism algebras of 2-term silting complexes, over path algebras of Dynkin quivers.
We will describe an algorithm to produce all basic 2-term silting complexes over the path algebra of a Dynkin quiver, and use this algorithm to compute some examples.
%
%


\end{abstract}

\maketitle

\section{Introduction}\label{s:introduction}
\medskip


Silting theory  plays an important role in representation theory   because it is closely related to many research fields, such as t-structures (\cite{kv}\cite{kY}\cite{kN}), cluster-tilting theory (\cite{BRT}), and tilting theory (\cite{AI}\cite{AIR}\cite{BZ_2016}).
Silted algebras are defined as endomorphism algebras  of 2-term silting complexes  over hereditary algebras (\cite{BZ}). According to a remarkable result of Buan and Zhou (\cite{BZ}),  if $A$ is a silted algebra, then $A$  either is a tilted algebra, or $A$ is a strictly shod algebra, that is, $A$ has global dimension  3 and any $A$-module has projective or injective dimension no greater than 1.

In this paper, a complete list of  silted algebras is given for path algebras of certain Dynkin quivers. The main result is as follows:

(1) there is 1 silted algebra of the quiver  $\circ$: $\circ$ (tilted algebra of type $\mathbb{A}_1$)

(2) there are 2 silted algebras of the quiver $\circ\rightarrow\circ$:
$\circ\rightarrow\circ$ (tilted algebra of type $\mathbb{A}_2$) and
$\circ ~~~ \circ$ (tilted algebra of type
$\mathbb{A}_1\amalg\mathbb{A}_1$)

(3) there are 5 silted algebras  of the quiver $\circ\longrightarrow \circ \longrightarrow \circ$, forming two families:

\begin{itemize}
\item[(\romannumeral1)]tilted algebras of type $\mathbb{A}_3$:

$\circ\longrightarrow \circ \longrightarrow \circ,$
~~~\text{}~~~
$\circ\longrightarrow \circ \longleftarrow \circ,$
~~~\text{}~~~
$\circ\longleftarrow \circ \longrightarrow \circ,$
~~~\text{}~~~
$\xymatrix@C=5pt{\circ\ar[rr]\ar@/^/@{.}[rrrr]_{}
&&\circ\ar[rr]&&\circ}$

\item[(\romannumeral2)]tilted algebra of type
 $\mathbb{A}_2\amalg\mathbb{A}_1$:
 $\circ\longrightarrow \circ ~~~ \circ$
\end{itemize}

(4) there are 15 silted algebras  of the quiver $\circ\longrightarrow \circ \longrightarrow \circ\longrightarrow\circ$, forming 3 families:

\begin{itemize}
\item[(\romannumeral1)]
tilted algebras of type $\mathbb{A}_4$, for details see Example \ref{A4} (I);
%
%
%
%
%

\item[(\romannumeral2)]tilted algebras of type $\mathbb{A}_3\amalg\mathbb{A}_1$:

$\xymatrix@C=5pt{\circ\ar[rr]&&\circ\ar[rr]
&&\circ~~~\circ},$
~~~\text{}~~~
$\xymatrix@C=5pt{\circ&&\circ\ar[rr]\ar[ll]
&&\circ~~~\circ},$
~~~\text{}~~~
$\xymatrix@C=5pt{\circ\ar[rr]&&\circ
&&\circ\ar[ll]~~~\circ},$
~~~\text{}~~~
$\xymatrix@C=5pt{\circ\ar[rr]\ar@/^/@{.}[rrrr]_{}&&
\circ\ar[rr]&&\circ&~~~\circ}$

\item[(\romannumeral3)]tilted algebras of type $\mathbb{A}_2\amalg\mathbb{A}_2$:
$\xymatrix@C=5pt{\circ\ar[rr]&&\circ\ar[rr]~~~
\circ\ar[rr]&&\circ}$
\end{itemize}

(5) there are 13 silted algebras  of the quiver

 $$\xymatrix@C=8pt{\stackrel{1}\circ\ar[rrd]&&\\
&&\stackrel{3}\circ\ar[rrr]&&&
\stackrel{4}\circ~~~,\\
\stackrel{2}\circ\ar[rru]
}$$
forming 4 families (see Example \ref{D4} for details on (i)(ii)(iii)):

\begin{itemize}
\item[(\romannumeral1)]tilted algebras of type $\mathbb{D}_4$;

%

\item[(\romannumeral2)]tilted algebras of type $\mathbb{A}_3\amalg\mathbb{A}_1$;


\item[(\romannumeral3)]tilted algebras of type $\mathbb{A}_2\amalg\mathbb{A}_1\amalg\mathbb{A}_1$;


\item[(\romannumeral4)]strictly shod algebra:

($s1$) $\xymatrix@C=5pt{\circ\ar[rr]\ar@/^/@{.}[rrrr]_{}
&&\circ\ar[rr]\ar@/^/@{.}[rrrr]_{}&&\circ\ar[rr]&&\circ
}$
\end{itemize}

(6) there are 62 silted algebras of the quiver

$$\xymatrix@C=5pt{\stackrel{1}\circ\ar[rrd]&&\\
&&\stackrel{3}\circ\ar[rrr]&&&
\stackrel{4}\circ\ar[rrr]&&&\stackrel{5}\circ~~~,\\
\stackrel{2}\circ\ar[rru]
}$$
forming 6 families (see Section 3.3.2 for details on (i)-(v)):
\begin{itemize}
\item[(\romannumeral1)]tilted algebras of type $\mathbb{D}_5$;

\item[(\romannumeral2)]tilted algebras of type $\mathbb{D}_4\amalg\mathbb{A}_1$;

%
%
%
%

\item[(\romannumeral3)]tilted algebras of type $\mathbb{A}_4\amalg\mathbb{A}_1$;


\item[(\romannumeral4)]tilted algebras of type $\mathbb{A}_3\amalg\mathbb{A}_2$;


\item[(\romannumeral5)]tilted algebras of type $\mathbb{A}_3\amalg\mathbb{A}_1\amalg\mathbb{A}_1$;

%

\item[(\romannumeral6)]strictly shod algebras:

($s2$) $\xymatrix@C=5pt{\circ\ar[rr]\ar@/^/@{.}[rrrrrr]_{}&&
\circ\ar[rr]&&\circ\ar[rr]\ar@/^/@{.}[rrrr]_{}&&
\circ\ar[rr]&&\circ
}$

($s3$) $\xymatrix@C=5pt{\circ\ar[rr]\ar@/^/@{.}[rrrr]&&
\circ\ar[rr]\ar@/^/@{.}[rrrr]&&\circ\ar[rr]&&
\circ&&\circ\ar[ll]
}$

($s4$) $\xymatrix@C=5pt{\circ\ar[rr]\ar@/^/@{.}[rrrr]&&
\circ\ar[rr]\ar@/^/@{.}[rrrr]&&\circ\ar[rr]&&
\circ\ar[rr]&&\circ
}$

($s5$) $\xymatrix@C=5pt{\circ\ar[rr]\ar@/^/@{.}[rrrr]_{}
\ar@/_/@{.}[rrd]_{}&&\circ\ar[rr]
\ar[d]\ar@/^/@{.}[rrrr]_{}&&
\circ\ar[rr]&&\circ\\
&&\circ
}$

\end{itemize}

 For a brief summary,   all of these type $\mathbb{A}$ silted algebras are tilted algebras, all but one of these type $\mathbb{D}_4$  silted algebras are tilted algebras, and all but four of these type $\mathbb{D}_5$ silted algebras are tilted algebras.  Therefore, from these examples we obtain  five strictly shod algebras $(s1)-(s5)$.

In order to classify these silted algabras, we first classify the 2-term silting complexes. For this purpose, we develop an algorithm  (Algorithm \ref{al2}) based on the algorithm of Happel and Ringel  classifying  tilting modules.  Given a Dynkin quiver $Q$, we can produce all the 2-term silting complexes by using this algorithm. Then  we calculate the endomorphism algebra of each 2-term silting complex. For types $\mathbb{A}_3$, $\mathbb{A}_4$  and $\mathbb{D}_4$, we actually classify silted algebras for all orientations,
but we do not find strictly shod algebras except in (5) and (6) above.

The structure of this paper is as follows.
In Section 2, we recall the definitions of tilting modules, tilted algebras, 2-term silting complexes and silted algebras.
In Section 3, we describe the algorithm producing all 2-term silting complexes and calculate the concrete examples.

Throughout this paper, $K$ denotes an algebraically closed field and $D=\text{Hom}_K(-,K)$ denotes the $K$-dual. All algebras will be finite-dimensional $K$-algebras, and all modules will be finite-dimensional right modules.

\noindent{\it Acknowledgement:} This paper is based on the master thesis of the author. She is deeply grateful to her supervisor Qunhua Liu and Dong Yang for their kind supervision, and she thanks Zongzhen Xie and Houjun Zhang for carefully reading the manuscript and pointing out an error. She acknowledges support by the National Natural Science Foundation of China No. 11671207.

\section{Silted algebras}
In this section, we recall the definitions of tilting modules, tilted algebras, 2-term silting complexes and silted algebras.
\medskip

\subsection*{2.1 Tilted algebras}

\begin{definition}\label{N8}\cite[Chapter 6 Definition 2.1 and Chapter 6 Corollary 4.4]{ASS}
 Let $A$ be an algebra. An $A$-module $T$ is called a \textbf{tilting module} if the following three conditions are satisfied:
\begin{itemize}
\item[(T1)]$\text{pd} T_A\leq 1$.
\item[(T2)]$\text{Ext}_A^1(T,T)=0$.

\item[(T3)]
    $|T|=|A|$.
\end{itemize}
\end{definition}


\begin{definition}\cite[Chapter 8 Definition 3.1]{ASS}
Let $Q$ be an acyclic quiver. An algebra $B$ is said to be \textbf{tilted} of type $Q$ if there exists a tilting module $T$ over the path algebra $A=KQ$ of $Q$ such that $B=\text{End}(T_A)$.
\end{definition}

The global dimension of a tilted algebra is at most 2 \cite[Proposition 3.2]{C}.

Let $Q$ be a Dynkin quiver and $A=KQ$. In this case the condition (T1) is automatic and both $A_A$ and $D({A_A})$ are tilting modules with endomorphism algebra $A$.
According to the proof of \cite[Proposition 2.1]{HR} by Happel and Ringel, we obtain the following algorithm to produce all basic tilting modules over $A$.

\begin{algorithm}\label{al1}
 Perform the following 3 steps for all non-empty subsets $I$ of $Q_0$.

(1) Let $e=\bigoplus\limits_{i\in I}e(i)$, $P(I)=eA$, $A(I)=A/\langle e\rangle$.

(2)  For each basic tilting $A(I)$-module $N$ which, considered as an $A$-module, has no non-trivial injective direct summands, form the $A$-module $M=P(I)\oplus\tau_A^{-1}N_A$.

(3) For each $A$-module M obtained in (2), let $m\in \mathbb{N}\cup \{0\}$ be such that $|\tau_A^{-m}M|=|A|$, and $\tau_A^{-m}M$ has non-trivial injective direct summand. Form the $A$-modules $\tau_A^{-p}M, 0\leq p\leq m$.
\end{algorithm}

\begin{example}\label{A2}
Let $A$ the path algebra  of the quiver
$\stackrel{1}\circ \longrightarrow \stackrel{2}\circ $.
Then  the AR-quiver $\Gamma (\text{mod}A)$ of $\text{mod}A$  is of the form
$$\xymatrix@C=15pt{&{\begin{smallmatrix}  1&1 \end{smallmatrix}}\ar[rd]\\
{\begin{smallmatrix}  0&1 \end{smallmatrix}}\ar@{..}[rr]\ar[ru]&&
{\begin{smallmatrix}  1&0 \end{smallmatrix}}
}$$

We apply Algorithm \ref{al1}.

($1$) $I=\{1\}$. The quiver of $A(I)=A/\langle e\rangle$ has only one vertex $2$. So $\text{mod} A(I)$ has only one tilting module $M_A={\begin{smallmatrix} 0&1 \end{smallmatrix}}$. This yields the tilting module
$$T_A=P(I)\oplus\tau_A^{-1}(M_A)=P(1)\oplus\tau_A^{-1}({\begin{smallmatrix} 0&1 \end{smallmatrix}})=D({A_A})={\begin{smallmatrix} 1&1\end{smallmatrix}}\oplus{\begin{smallmatrix} 1&0 \end{smallmatrix}}.$$
\vspace{-10pt}
\newcommand{\scirc}{{\scriptstyle \circ}}
\newcommand{\sbullet}{{\scriptstyle\bullet}}
\[{\setlength{\unitlength}{0.3pt}
\begin{picture}(650,230)
\put(-20,200){
$\xymatrix@C=15pt{&\boxed{{\begin{smallmatrix}  1&1 \end{smallmatrix}}}\ar[rd]\\
{\begin{smallmatrix}  0&1 \end{smallmatrix}}\ar@{..}[rr]\ar[ru]&&
\boxed{{\begin{smallmatrix}  1&0 \end{smallmatrix}}}
}$}
\put(500,170){$
\begin{array}{*{20}{c@{\hspace{3pt}}}}
&&\sbullet&&\\[-5pt]
&\scirc&&\sbullet\\[-5pt]
\end{array}$}
\end{picture}}\]

($2$) $I=\{2\}$. The quiver of $A(I)=A/\langle e\rangle$ has only one vertex $1$. So $\text{mod} A(I)$ has only one tilting module $M_A={\begin{smallmatrix} 1&0 \end{smallmatrix}}$,  which is injective.

($3$) $I=\{1,2\}$. In this case the tilting module is $$T_A=P(I)=P(1)\oplus P(2)=A_A={\begin{smallmatrix} 1&1 \end{smallmatrix}}\oplus{\begin{smallmatrix} 0&1 \end{smallmatrix}}.$$
\[{\setlength{\unitlength}{0.3pt}
\begin{picture}(650,230)
\put(-20,200){
$\xymatrix@C=15pt{&\boxed{{\begin{smallmatrix}  1&1 \end{smallmatrix}}}\ar[rd]\\
\boxed{{\begin{smallmatrix}  0&1 \end{smallmatrix}}}\ar@{..}[rr]\ar[ru]&&
{\begin{smallmatrix}  1&0 \end{smallmatrix}}
}$}
\put(500,170){$
\begin{array}{*{20}{c@{\hspace{3pt}}}}
&&\sbullet&&\\[-5pt]
&\sbullet&&\scirc\\[-5pt]
\end{array}$}
\end{picture}}\]

To summarise,  $A$ has two basic tilting modules : ${\begin{smallmatrix} 1&1\end{smallmatrix}}\oplus{\begin{smallmatrix} 1&0 \end{smallmatrix}}$ and ${\begin{smallmatrix} 1&1 \end{smallmatrix}}\oplus{\begin{smallmatrix} 0&1 \end{smallmatrix}}$.
Both endomorphism algebras are isomorphism to $A$.
\end{example}

\subsection*{2.2 2-term silting complexes}
\begin{definition}\cite[Page 1]{BZ}
Let $A$ be an algebra.
Let $\textbf{P}$ be a complex in the bounded homotopy category of finitely generated projective $A$-modules $\text{K}^b(\text{proj}A)$. Then $\textbf{P}$ is called \textbf{silting} if $\Hom_{\text{K}^b(\text{proj}A)}(\textbf{P},\textbf{P}[i])=0$ for $i>0$, and if $\textbf{P}$ generates $\text{K}^b(\text{proj}A)$ as a triangulated category. Furthermore, we say that $\textbf{P}$ is \textbf{$2$-term} if $\textbf{P}$ only has non-zero terms in degrees 0 and -1.
\end{definition}

The following result is a corollary of \cite[Theorem 3.2]{AIR}.

\begin{corollary}\label{coro}
 Assume that $A$ is hereditary. Then any basic 2-term silting complex over $A$ is of the form $M\oplus P[1]$, where $P=eA$ for some idempotent e of $A$, and $M$ is a basic tilting module over $A/\langle e\rangle$. Conversely, every complex of this form is a 2-term silting complex.
\end{corollary}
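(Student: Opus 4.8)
The plan is to derive this as a direct specialization of the general silting reduction theorem of Adachi--Iyama--Reiten (\cite[Theorem 3.2]{AIR}). That theorem sets up a bijection between (isomorphism classes of basic) 2-term silting complexes over $A$ and certain combinatorial or module-theoretic data; when $A$ is hereditary, this data should collapse to exactly the description in the statement. So first I would recall the precise form of \cite[Theorem 3.2]{AIR} --- in particular its bijection between support $\tau$-tilting pairs $(M,P)$ and 2-term silting complexes, where $M$ is a support $\tau$-tilting module, $P$ is a projective with $\Hom_A(P,M)=0$, and the associated complex is $M \oplus P[1]$ (with $M$ placed in degree $0$ via its minimal projective presentation). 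The goal is to show that the hereditary hypothesis forces $M$ to be an honest tilting module over $A/\langle e\rangle$ where $P=eA$.

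The key steps, in order, are as follows. First I would fix a basic 2-term silting complex $\textbf{P}$ and invoke \cite[Theorem 3.2]{AIR} to write $\textbf{P}=M\oplus P[1]$ with $(M,P)$ the corresponding support $\tau$-tilting pair and $P=eA$ for an idempotent $e$. Second, I would use that $A$ is hereditary: here $\tau$-rigidity of $M$ should upgrade to honest rigidity, because over a hereditary algebra the Auslander--Reiten formula gives $\Hom_A(M,\tau M)\cong D\Ext_A^1(M,M)$, so $M$ being $\tau$-rigid is equivalent to $\Ext_A^1(M,M)=0$, which together with $\prdim M\le 1$ (automatic over a hereditary algebra) is exactly the tilting conditions (T1)--(T2) from Definition \ref{N8}. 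Third, I would exploit the compatibility condition $\Hom_A(P,M)=0$, i.e. $\Hom_A(eA,M)=Me=0$, to conclude that $M$ is naturally a module over the quotient $A/\langle e\rangle$, and that the number of indecomposable summands matches so that (T3) holds relative to $A/\langle e\rangle$; hence $M$ is a basic tilting module over $A/\langle e\rangle$. The converse direction runs the same equivalences backwards: given such an $M$ and $P=eA$, one checks $M\oplus P[1]$ satisfies the $\Hom$-vanishing and generation conditions in the definition of a 2-term silting complex, again using heredity to pass between $\Ext$-rigidity and $\tau$-rigidity.

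I expect the main obstacle to be the bookkeeping around the idempotent and the passage between $A$-modules and $A/\langle e\rangle$-modules: one must check carefully that $Me=0$ really means $M$ is a module over $A/\langle e\rangle$ and, crucially, that the tilting condition $|M|=|A/\langle e\rangle|$ (counting vertices of the quotient) is the correct reflection of the support $\tau$-tilting completeness condition $|M|+|P|=|A|$. In other words, the subtlety is translating the rank/support count in \cite[Theorem 3.2]{AIR} --- where $M$ together with $P$ accounts for all simple/projective summands of $A$ --- into the statement that $M$ alone is a \emph{complete} tilting object for the smaller algebra $A/\langle e\rangle$. Verifying that the generation condition in the definition of silting corresponds precisely to this completeness, rather than merely to rigidity, is the step I would handle most carefully; everything else is a formal consequence of heredity collapsing $\tau$-tilting theory onto classical tilting theory.
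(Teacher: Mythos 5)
Your proposal is correct and takes essentially the same approach as the paper, which offers no argument beyond citing \cite[Theorem 3.2]{AIR}: specializing that bijection with support $\tau$-tilting pairs $(M,P=eA)$ and using heredity (the Auslander--Reiten formula to turn $\tau$-rigidity into $\Ext^1$-vanishing, the automatic bound $\mathrm{pd}\,M\le 1$, and the count $|M|+|P|=|A|$ matching $|M|=|A/\langle e\rangle|$) is precisely the intended derivation. The bookkeeping subtlety you flag resolves cleanly because $\langle e\rangle$ is an idempotent ideal, so $\mathrm{mod}(A/\langle e\rangle)$ is extension-closed in $\mathrm{mod}A$ and the relevant $\Ext^1$-groups over $A$ and over $A/\langle e\rangle$ coincide, letting rigidity transfer in both directions.
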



\subsection*{2.3 Silted algebras}

\begin{definition}\cite[Definition 0.1]{BZ}
Let $Q$ be an acyclic quiver. We call an algebra $B$ \textbf{silted} of type $Q$ if there exists a 2-term silting complex $M$ over $KQ$ such that $B\cong \text{End}_{\text{K}^b(\mathrm{proj}KQ)}(M)$.
\end{definition}
Tilted algebras  are silted algebras, because (projective resolutions of)
tilting modules are 2-term silting complexes.

\begin{theorem}\label{s}\cite[Theorem 2.13]{BZ}
Let A be a connected  algebra. Then the following are equivalent:
\begin{itemize}
\item[($a$)] A is a silted algebra;

\item[($b$)] A is a tilted algebra or a strictly shod algebra.
\end{itemize}
\end{theorem}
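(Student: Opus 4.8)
The plan is to realize the module category of a silted algebra as the heart of a Happel--Reiten--Smal\o{} (HRS) tilt of a hereditary module category, and to extract from that heart the homological dichotomy that distinguishes tilted from strictly shod algebras. Since the statement is an equivalence, I treat the two implications separately. Fix a basic $2$-term silting complex $\mathbf{P}$ over a connected hereditary algebra $H=KQ$ and put $B=\End(\mathbf{P})$; by Corollary~\ref{coro} I may assume $\mathbf{P}=M\oplus P[1]$ with $P=eH$ and $M$ a tilting $H/\langle e\rangle$-module. To $\mathbf{P}$ I attach the torsion pair $(\mathcal{T},\mathcal{F})$ in $\mod H$ given by $\mathcal{T}=\{X:\Hom_{\mathrm{D}^b(H)}(\mathbf{P},X[1])=0\}$ and $\mathcal{F}=\{X:\Hom_{\mathrm{D}^b(H)}(\mathbf{P},X)=0\}$, together with the tilted heart $\mathcal{H}=\{Y:H^0(Y)\in\mathcal{T},\ H^{-1}(Y)\in\mathcal{F}\}$ inside $\mathrm{D}^b(H)$. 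The first step is the identification $\mathcal{H}\simeq\mod B$ of abelian categories carrying $\mathbf{P}$ to the regular module $B$, which is the two-term instance of the silting/$t$-structure dictionary and makes every $B$-module concrete as an object of $\mathrm{D}^b(H)$ supported in degrees $-1$ and $0$.

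Next I would compute projective and injective dimensions inside $\mathcal{H}$. The delicate point, and the reason the answer is not simply ``$\gldim B\le 2$'', is that for a genuinely silting (non-tilting) $\mathbf{P}$ the groups $\Hom_{\mathrm{D}^b(H)}(\mathbf{P},\mathbf{P}[i])$ may be nonzero for $i<0$; hence the realization functor $\mathrm{D}^b(\mathcal{H})\to\mathrm{D}^b(H)$ is \emph{not} an equivalence, and $\Ext^i_B$ agrees with $\Hom_{\mathrm{D}^b(H)}(-,-[i])$ only up to degree~$1$. I must therefore resolve objects directly in $\mathcal{H}$. Using that $H$ is hereditary, every object of $\mathcal{H}$ fits in a short exact sequence in $\mathcal{H}$ with one end in $\mathcal{T}$ and the other in $\mathcal{F}[1]$; a degree count of $\Ext^\bullet_H$ between the constituents in $\mathcal{T}$ and $\mathcal{F}$ then yields $\gldim B\le 3$ and, crucially, that each indecomposable $B$-module has projective dimension $\le 1$ or injective dimension $\le 1$. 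In other words $B$ is a \emph{shod} algebra.

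It then remains to split according to global dimension. If $\gldim B\le 2$, then $B$ is shod of global dimension at most $2$, i.e.\ quasi-tilted; since its hereditary model is the genuine module category $\mod H$ (and not, say, a category of coherent sheaves), the Happel--Reiten--Smal\o{} characterization of tilted algebras forces $B$ to be tilted. If instead $\gldim B=3$, then $B$ is shod of global dimension exactly $3$, which is the definition of strictly shod. This proves (a)$\Rightarrow$(b). For the converse, a tilted algebra is silted because a projective resolution of its defining tilting module is a $2$-term silting complex, as noted just before the statement; for a strictly shod $B$ one must run the construction backwards, using the structure theory of shod algebras to recover a hereditary $H$ and a torsion pair whose HRS-tilt is $\mod B$ and which is induced by a $2$-term silting complex with endomorphism algebra $B$.

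I expect the technical heart to lie in the homological bookkeeping of the second paragraph together with the strictly shod direction of the converse. Precisely because $\mathrm{D}^b(B)\not\simeq\mathrm{D}^b(H)$ in the genuinely silting case, the global-dimension bound and the pd/id dichotomy cannot be read off from $\mathrm{D}^b(H)$ and must be established through explicit resolutions in $\mathcal{H}$; and reconstructing a hereditary $H$ from an abstract strictly shod algebra, so as to exhibit the required $2$-term silting complex, is the step with the least formal input and hence the main obstacle.
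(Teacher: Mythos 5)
You should first note that the paper contains no proof of this statement at all: it is quoted from Buan and Zhou \cite[Theorem 2.13]{BZ}, so your proposal can only be measured against the argument in that reference. Your outline of (a)$\Rightarrow$(b) — identify $\mod B$ with the heart of the HRS tilt attached to the $2$-term silting complex, deduce that silted algebras are shod with $\gldim B\le 3$, then split according to global dimension — does reproduce the broad shape of that argument. The first genuine gap is your treatment of the case $\gldim B\le 2$. The claim that $B$ is then tilted ``since its hereditary model is the genuine module category $\mod H$'' is a non-sequitur: the Happel--Reiten--Smal\o{} theorem realizes the quasi-tilted algebra $B$ as the endomorphism algebra of a tilting object in \emph{some} hereditary abelian category attached to $B$, and nothing identifies that category with $\mod H$. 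Indeed, as you yourself emphasize, $\mathbf{P}$ is silting but in general not tilting, so there is no derived equivalence $\mathrm{D}^b(B)\simeq\mathrm{D}^b(H)$ along which the module-category model could be transported; your own (correct) observation thus undercuts precisely the step you rest on. Excluding the canonical-type alternative — equivalently, producing an honest tilting module over a hereditary algebra whose endomorphism algebra is $B$ — is a substantive portion of the proof in \cite{BZ}; the present paper's Observation \ref{Ob} shows how such a replacement works only in the easy cases where $P=0$ or $M$ has no projective direct summands, and the general case is exactly what remains to be done.

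The second gap is the implication (b)$\Rightarrow$(a) for strictly shod algebras, which you leave as ``run the construction backwards'' and explicitly flag as the step you cannot supply. This is the hardest implication in \cite{BZ}, and no mechanism is offered: what is needed is the structure theory of shod algebras (Coelho--Lanzilotta, Reiten--Skowro\'nski), producing a suitable torsion pair $(\mathcal{T},\mathcal{F})$ in $\mod B$ with injective dimension at most $1$ on $\mathcal{T}$ and projective dimension at most $1$ on $\mathcal{F}$, an HRS co-tilt of $\mod B$ at this pair yielding a hereditary abelian category containing a $2$-term silting object with endomorphism algebra $B$, and finally an argument that this hereditary category is a module category rather than a category of coherent sheaves — which is where strict shodness (and connectedness of $A$, which your sketch never uses) must enter. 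As written, the proposal therefore establishes neither the tilted case of the forward direction nor the strictly shod case of the converse; both are named as difficulties but neither is closed.
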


Recall from \cite[page 2]{CL} that an algebra $A$ is called \textbf{shod} (for small homological dimension) provided for each indecomposable $A$-module $X$, either $\text{pd} X_A\leq 1$ or $\text{id} X_A\leq 1$. It is known that $\text{gl.dim}A\leq3$ \cite[Proposition 2.2]{C}. We call $A$ \textbf{strictly shod} if it is shod and $\text{gl.dim} A=3$. It is known that tilted algebras are shod \cite[Proposition 3.2]{C}.

\medskip
The following lemma will be useful.

\begin{lemma}\label{s}
 An algebra $A$ is silted of type $Q$ if and only if $\text{A}^{op}$ is silted of type $\text{Q}^{op}$.
\end{lemma}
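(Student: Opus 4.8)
The plan is to exploit the standard duality $D=\Hom_K(-,K)$, which gives an exact contravariant equivalence between $\mod A$ and $\mod A^{op}$, together with the fact that $(KQ)^{op}\cong KQ^{op}$. The key observation is that $D$ induces an equivalence of triangulated categories $\K^b(\proj A)\to \K^b(\proj A^{op})^{op}$, or more usefully the derived duality $\RHom_A(-,A)$ together with $D$ yields a triangle equivalence $\K^b(\proj A)\simeq \K^b(\proj A^{op})$ that reverses the roles of the two algebras. First I would set $A=KQ$ and recall that since $Q$ is acyclic, $A$ is hereditary, so by Corollary \ref{coro} every basic $2$-term silting complex over $A$ has the form $M\oplus P[1]$ with $P=eA$ and $M$ a basic tilting $A/\langle e\rangle$-module.

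The main step is to produce, from a $2$-term silting complex $\mathbf{P}$ over $KQ$ with $\End_{\K^b(\proj KQ)}(\mathbf{P})\cong B$, a $2$-term silting complex $\mathbf{P}'$ over $KQ^{op}$ whose endomorphism algebra is $B^{op}$. I would take $\mathbf{P}'$ to be the image of $\mathbf{P}$ under the contravariant functor $\nu=D\RHom_A(-,A)$, or simply under $D$ applied termwise after passing through the standard identification $\proj A^{op}\simeq (\proj A)^{op}$ sending $eA$ to $Ae=D(eA)^{\,}$ appropriately; concretely, the termwise $K$-dual sends a complex of projective right $A$-modules to a complex of projective right $A^{op}$-modules, reversing the differential's direction and swapping degrees $0$ and $-1$ into $0$ and $1$, after which a shift returns it to a $2$-term complex. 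The crucial points to verify are that this assignment sends silting complexes to silting complexes (the vanishing condition $\Hom(\mathbf{P},\mathbf{P}[i])=0$ for $i>0$ and the generation condition are both self-dual under a contravariant triangle equivalence) and that it reverses composition of morphisms, giving the opposite algebra. For the endomorphism algebra one has the natural isomorphism
\begin{equation*}
\Hom_{\K^b(\proj A^{op})}(\mathbf{P}',\mathbf{P}')\cong \Hom_{\K^b(\proj A)}(\mathbf{P},\mathbf{P})^{op},
\end{equation*}
which is exactly $B^{op}$.

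The hard part, or at least the part requiring genuine care, is pinning down the duality at the level of $\K^b(\proj A)$ so that it genuinely lands in $2$-term complexes over $A^{op}$ and matches the generation and self-orthogonality conditions, rather than merely at the level of module categories. I would phrase this cleanly by invoking that $D$ gives a duality $\mod A\to\mod A^{op}$ carrying projectives to injectives and vice versa, while the functor $\Hom_A(-,A)$ gives a duality $\proj A\to\proj A^{op}$; composing and bookkeeping the shift shows a $2$-term complex concentrated in degrees $-1,0$ goes to a $2$-term complex in the same degrees over $A^{op}$. Once the functor is established as a contravariant triangle equivalence $\K^b(\proj A)\to\K^b(\proj A^{op})$, silting objects correspond to silting objects and endomorphism algebras pass to their opposites, yielding the claim; the final identification $KQ^{op}\cong(KQ)^{op}$ then tells us the type is $Q^{op}$. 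The converse direction is automatic by symmetry, since $(A^{op})^{op}=A$ and $(Q^{op})^{op}=Q$.
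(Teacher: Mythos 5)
Your proposal's viable core is exactly the paper's proof: $\RHom_A(-,A)[1]$ is a triangle anti-equivalence $\text{K}^b(\text{proj}A)\to \text{K}^b(\text{proj}A^{op})$; it sends a complex concentrated in degrees $-1,0$ to one in degrees $0,1$, and the shift returns it to degrees $-1,0$; being an anti-equivalence it preserves the vanishing and generation conditions and satisfies $\End(F\mathbf{P})\cong\End(\mathbf{P})^{op}$; and $(KQ)^{op}\cong KQ^{op}$, the converse following by symmetry since $(A^{op})^{op}=A$. All of these ingredients appear in your write-up and are correct.

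The trouble is that they are interleaved with claims about $D=\Hom_K(-,K)$ that are false, and any version of the argument actually routed through $D$ fails. The termwise $K$-dual of a complex of projective right $A$-modules is a complex of \emph{injective} right $A^{op}$-modules, so it does not live in $\text{K}^b(\text{proj}A^{op})$; accordingly $D(eA)$ is injective and in general not isomorphic to $Ae$ --- the identification you want is $\Hom_A(eA,A)\cong Ae$ --- and since a Dynkin path algebra with at least one arrow is never self-injective, this discrepancy is real (for $Q\colon 1\to 2$ the two modules do not even have the same dimension). Likewise $D\RHom_A(-,A)$, which you call ``the contravariant functor $\nu$'', is covariant, being a composite of two contravariant functors: it is the derived Nakayama functor, landing in $\text{K}^b$ of injectives over $A$, not in $\text{K}^b(\text{proj}A^{op})$; and ``composing'' $D$ with $\Hom_A(-,A)$ in your last paragraph produces this same Nakayama functor rather than the duality you need. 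The repair is simply to delete $D$ from the argument and run everything through $\RHom_A(-,A)[1]$ alone, as the paper does; your appeal to Corollary \ref{coro} is likewise never used and can be dropped.
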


\begin{proof}
This is because $\RHom_{A}(?,A)[1]:\text{K}^b(\text{proj}A)\longrightarrow\text{K}^b(\text{proj}A^{op})$  is a triangle anti-equivalence and induces a bijection between the set of 2-term silting complexes over $KQ$ and that over $KQ^{op}$.
\end{proof}

\section{Examples of silted algebras of Dynkin type}

In this section, we will describe an algorithm to produce all basic 2-term silting complexes over the path algebra of a Dynkin quiver, and use this algorithm to compute some examples.

Let $Q$ be a Dynkin quiver and $A=KQ$. Let $\text{K}^{[-1.0]}(\text{proj}A)$ be the full subcategory of $\text{K}^b(\text{proj}A)$ consisting of complexes concentrated in degrees -1 and 0. We will call the full subquiver of the AR quiver of $\text{K}^b(\text{proj}A)$ whose vertices belong to $\text{K}^{[-1.0]}(\text{proj}A)$ the AR quiver of $\text{K}^{[-1.0]}(\text{proj}A)$. It is obtained from the AR quiver of  $\text{mod}A$  by properly gluing a copy of $Q$ from the right.

\vspace{27pt}
\subsection*{3.1 The algorithm}

\paragraph{\indent Let $Q$ be a Dynkin quiver, and $A=KQ$. Due to Corollary \ref{coro} we have the following algorithm to produce all basic 2-term silting complexes over $A$.}

\begin{algorithm}\label{al2}

We perform the following two steps for any subset $I$ of $Q_0$:

(1) Let $e=\bigoplus\limits_{i\in I}e(i)$  and  $A(I)=A/\langle e\rangle$.

(2) For each basic tilting $A(I)$-module  $M$ produced by Algorithm \ref{al1}, form $T=M\oplus P[1]$ where $P=eA$.

\end{algorithm}

\begin{observation}\label{Ob}
Let $T=M\oplus P[1]$ be a 2-term silting complex over $A$, where $P\in \mathrm{proj}A$ and  $M\in \mathrm{mod}A$. If $P=0$ or $M$ has no non-trivial projective direct summands, then $\mathrm{End}(T)$ is  a tilted algebra of type $Q$.

Indeed, if $P=0$, then $T=M$ is a tilting $A$-module; if $M$ has no no-trivial projective direct summands, then $\tau_A(T)$ belongs to $\mathrm{mod}A$, and  hence is a tilting $A$-module, so $\mathrm{End}(T)\cong \mathrm{End}(\tau_A(T))$ is a tilted algebra.
\end{observation}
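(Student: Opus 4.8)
The plan is to split along the two hypotheses and, in the nontrivial case, to transport $T$ by the Auslander--Reiten translation to an honest tilting module. If $P=0$, then $T=M$, and Corollary \ref{coro} applied with $e=0$ (so that $A/\langle e\rangle=A$) immediately identifies $M$ as a basic tilting $A$-module; by definition $\End(T)=\End(M)$ is then tilted of type $Q$, and there is nothing further to do.

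Now suppose $M$ has no non-trivial projective direct summands. The idea is to apply the Auslander--Reiten translation $\tau=\nu\circ[-1]$, an autoequivalence of $\cd^b(\mod A)\simeq\mathrm{K}^b(\proj A)$, where $\nu$ is the Nakayama functor. I would first compute $\tau(T)=\tau(M)\oplus\tau(P[1])$ one summand at a time. For the shifted projective, $\tau(P[1])=\nu(P)[1][-1]=\nu(P)$, which is an injective module sitting in degree $0$. For the summand $M$, the assumption that $M$ has no projective summands is exactly what forces $\tau(M)$ to coincide with the classical Auslander--Reiten translate, again a module in degree $0$. Hence $\tau(T)$ is concentrated in degree $0$, i.e.\ it is an $A$-module.

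The next step is to promote this module to a tilting module. Since $\tau$ is an autoequivalence, it sends the silting object $T$ to a silting object $\tau(T)$: both the vanishing $\Hom(\tau(T),\tau(T)[i])=0$ for $i>0$ and the generation of $\mathrm{K}^b(\proj A)$ are preserved, and $\tau$ respects the number of indecomposable summands, so $|\tau(T)|=|A|$. Here I would invoke that $A$ is hereditary: condition (T1) is automatic, the silting condition collapses to $\Ext^1_A(\tau(T),\tau(T))=0$, so $\tau(T)$ satisfies (T1), (T2), (T3) and is a tilting $A$-module. Thus $\End(\tau(T))$ is tilted of type $Q$, and because $\tau$ is an equivalence it induces $\End(T)\cong\End(\tau(T))$, completing this case.

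The main obstacle is the degree bookkeeping in the computation of $\tau(T)$: one must fix the convention for $\tau$ on $\cd^b(\mod A)$ and check carefully that $\tau(P[1])$ lands in degree $0$ while $\tau(M)$ remains a module. It is precisely here that the hypothesis on $M$ enters, since a projective summand $P'$ of $M$ would be sent to $\nu(P')[-1]$, which leaves degree $0$ and would prevent $\tau(T)$ from being a module. The remaining ingredients --- preservation of the silting property under an autoequivalence, and the identification of degree-$0$ silting objects with tilting modules over a hereditary algebra --- I expect to be routine.
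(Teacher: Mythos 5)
Your proof is correct and follows essentially the same route as the paper: for $P=0$ the complex is itself a tilting module, and otherwise the derived Auslander--Reiten translation $\tau=\nu[-1]$ carries $T$ to a module, which is then a tilting module with the same endomorphism algebra. You simply spell out the degree bookkeeping and the preservation of the silting property that the paper leaves implicit.
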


 By Observation \ref{Ob}, we will divide silted algebras of type $Q$ into two classes:
\begin{itemize}
\item[(I)] tilted algebras of type $Q$,
\item[(II)]$\text{End}(T)$, where $T=M\oplus P[1]$ is a 2-term silting complex such that $P\neq 0$ and $M$ has a non-zero projective direct summand over $A$. In other words, $T$ has direct summands both on the left border and on the right border of the AR quiver of $\text{K}^{[-1.0]}(\text{proj}A)$.
\end{itemize}

We remark that (I) and (II) may have overlaps. We are mainly interested in the silted algebras which are not tilted of type $Q$,  especially the strictly shod algebras.


\subsection*{3.2 Examples of type $\mathbb{A}$}

\subsection*{3.2.1 Type $\mathbb{A}_1$}\label{1}
\begin{example}
Let $A$ be the path algebra of  the quiver
$\stackrel{1}\circ$. For a 2-term silting complex $M\oplus P[1]$, either $P=0$, or $M=0$.
So $\text{End}_{\text{K}^b(\text{proj}A)}(M\oplus P[1])$ is isomorphic to $A$.
\end{example}

\subsection*{3.2.2 Type  $\mathbb{A}_2$}\label{2}

%
%

\begin{example}
Let $A$ be the path algebra of  the quiver $$\stackrel{1}\circ\longrightarrow \stackrel{2}\circ $$
Tilted algebras were already computed in Example \ref{A2}. Thus
below we apply Algorithm \ref{al2} to all non-empty subsets $I$ of $Q_0$.

The AR-quiver $\Gamma(\text{K}^{[-1.0]}(\text{proj}A))$ is
$$\xymatrix@C=15pt{&{\begin{smallmatrix}  1&1 \end{smallmatrix}}\ar[rd]\ar@{..}[rr]&&{\begin{smallmatrix}  0&1[1] \end{smallmatrix}}\ar[rd]\\
{\begin{smallmatrix}  0&1 \end{smallmatrix}}\ar@{..}[rr]\ar[ru]&&{\begin{smallmatrix}  1&0 \end{smallmatrix}}\ar@{..}[rr]\ar[ru]&&
{\begin{smallmatrix}  1&1[1] \end{smallmatrix}}&
}$$

($1$) $I=\{1\}$. $A(I)=A/\langle e\rangle$ is given by the quiver  $\stackrel{2}\circ $, which has only one tilting module $M={\begin{smallmatrix}  0&1 \end{smallmatrix}}$.
The corresponding silting complex is
\newcommand{\scirc}{{\scriptstyle \circ}}
\newcommand{\sbullet}{{\scriptstyle\bullet}}
\[{\setlength{\unitlength}{0.3pt}
\begin{picture}(650,230)
\put(-20,200){
$\xymatrix@C=15pt{&{\begin{smallmatrix}  1&1 \end{smallmatrix}}\ar[rd]\ar@{..}[rr]&&{\begin{smallmatrix}  0&1[1] \end{smallmatrix}}\ar[rd]\\
\boxed{{\begin{smallmatrix}  0&1 \end{smallmatrix}}}\ar@{..}[rr]\ar[ru]&&{\begin{smallmatrix}  1&0 \end{smallmatrix}}\ar@{..}[rr]\ar[ru]&&
\boxed{{\begin{smallmatrix}  1&1[1] \end{smallmatrix}}}&
}$}
\put(700,170){$
\begin{array}{*{20}{c@{\hspace{3pt}}}}
&&\scirc&&\scirc\\[-5pt]
&\sbullet&&\scirc&&\sbullet\\[-5pt]
\end{array}$}
\end{picture}}\]
\noindent Its endomorphism algebra is given by the quiver
$\circ ~~~ \circ$.

($2$) $I=\{2\}$. $A(I)=A/\langle e\rangle$ is given by the quiver  $\stackrel{1}\circ $, which has only one tilting module $M={\begin{smallmatrix}  1&0 \end{smallmatrix}}$.
The corresponding silting complex is
\[{\setlength{\unitlength}{0.3pt}
\begin{picture}(650,230)
\put(-20,200){
$\xymatrix@C=15pt{&{\begin{smallmatrix}  1&1 \end{smallmatrix}}\ar[rd]\ar@{..}[rr]&&\boxed{{\begin{smallmatrix}  0&1[1] \end{smallmatrix}}}\ar[rd]\\
{\begin{smallmatrix}  0&1 \end{smallmatrix}}\ar@{..}[rr]\ar[ru]&&\boxed{{\begin{smallmatrix}  1&0 \end{smallmatrix}}}\ar@{..}[rr]\ar[ru]&&
{\begin{smallmatrix}  1&1[1] \end{smallmatrix}}&
}$}
\put(700,170){$
\begin{array}{*{20}{c@{\hspace{3pt}}}}
&&\scirc&&\sbullet\\[-5pt]
&\scirc&&\sbullet&&\scirc\\[-5pt]
\end{array}$}
\end{picture}}\]
\noindent It is clear that $\tau T=A_A$.

($3$) $I=\{1,2\}$. Then $T=A_A[1]$, $\tau T=D({}_AA)$ and $\text{End}(T)\cong A$.
\smallskip
\paragraph{\indent To summarise, there are 2 silted algebras of $\circ\longrightarrow \circ $, forming two families:}

\begin{itemize}
\item[(\romannumeral1)]tilted algebra of type $\mathbb{A}_2$:
$\circ\rightarrow\circ$

\item[(\romannumeral2)]tilted algebra of type
$\mathbb{A}_1\amalg\mathbb{A}_1$:
$\circ ~~~ \circ$
\end{itemize}

More precisely, we have the following table:
\[\begin{tabular}{|c|c|c|}
 \hline
silted algebras & 2-term silting complexes & tilted type\\
\hline
\xymatrix{\circ \ar[r] & \circ} & $\begin{array}{*{20}{c@{\hspace{3pt}}}}
&&\sbullet&&\scirc\\[-5pt]
&\scirc&&\sbullet&&\scirc
\end{array} \hspace{5mm}   \begin{array}{*{20}{c@{\hspace{3pt}}}}
&&\sbullet&&\scirc\\[-5pt]
&\sbullet&&\scirc&&\scirc
\end{array}  \hspace{5mm}  \begin{array}{*{20}{c@{\hspace{3pt}}}}
&&\scirc&&\sbullet\\[-5pt]
&\scirc&&\sbullet&&\scirc
\end{array}  \hspace{5mm}  \begin{array}{*{20}{c@{\hspace{3pt}}}}
&&\scirc&&\sbullet\\[-5pt]
&\scirc&&\scirc&&\sbullet
\end{array} $ & $\mathbb{A}_2$ \\
\hline
\xymatrix{\circ & \circ} & $\begin{array}{*{20}{c@{\hspace{3pt}}}}
&&\scirc&&\scirc\\[-5pt]
&\sbullet&&\scirc&&\sbullet
\end{array}$ & $\mathbb{A}_1\amalg\mathbb{A}_1$ \\
\hline
\end{tabular}
\]

\end{example}

\vspace{-10pt}
\bigskip
\subsection*{3.2.3 Type  $\mathbb{A}_3$}\label{3}

\paragraph{\indent According to \cite[Theorem 1]{ONFR}, there are 14 basic 2-term silting complexes, 5 of which are tilting modules.}

 Up to isomorphism there are three quivers of type $\mathbb{A}_3$. Due to Lemma \ref{s}, we classify silted algebras for two of them.

\begin{example}\label{A3} Let $A$ be the path algebra of the quiver $$\stackrel{1}\circ\longrightarrow \stackrel{2}\circ \longrightarrow \stackrel{3}\circ$$

(I) The AR-quiver $\Gamma (\text{mod}A)$ of $\text{mod}A$ is of the form
$$\xymatrix@C=5pt{&&{\begin{smallmatrix} 1&1&1 \end{smallmatrix}}\ar[rd]\\
&{\begin{smallmatrix} 0&1&1 \end{smallmatrix}}\ar@{..}[rr]\ar[ru]\ar[rd]&&
{\begin{smallmatrix} 1&1&0 \end{smallmatrix}}\ar[rd]\\
{\begin{smallmatrix} 0&0&1 \end{smallmatrix}}\ar@{..}[rr]\ar[ru]&&
{\begin{smallmatrix} 0&1&0 \end{smallmatrix}}\ar@{..}[rr]\ar[ru]&&
{\begin{smallmatrix} 1&0&0 \end{smallmatrix}}
}$$
 We apply Algorithm \ref{al1} to obtain the following table of tilted algebras

\newcommand{\scirc}{{\scriptstyle \circ}}
\newcommand{\sbullet}{{\scriptstyle\bullet}}

%
%

\[\begin{tabular}{|c|c|}
 \hline
tilted algebras & tilting modules\\
\hline
\xymatrix{\circ \ar[r] & \circ \ar[r] & \circ} & $\begin{array}{*{20}{c@{\hspace{3pt}}}}
&&\sbullet&&\\[-5pt]
&\scirc&&\sbullet\\[-5pt]
\scirc&&\scirc&&\sbullet
\end{array} \hspace{5mm}
\begin{array}{*{20}{c@{\hspace{3pt}}}}
&&\sbullet&&\\[-5pt]
&\sbullet&&\scirc\\[-5pt]
\sbullet&&\scirc&&\scirc
\end{array}$\\
\hline
\xymatrix{\circ  & \circ \ar[l]\ar[r] & \circ} & $\begin{array}{*{20}{c@{\hspace{3pt}}}}
&&\sbullet&&\\[-5pt]
&\scirc&&\sbullet\\[-5pt]
\scirc&&\sbullet&&\scirc
\end{array}$\\
\hline
\xymatrix@C=8pt{\circ\ar[rr]\ar@/^/@{.}[rrrr]_{}
&&\circ\ar[rr]&&\circ} & $\begin{array}{*{20}{c@{\hspace{3pt}}}}
&&\sbullet&&\\[-5pt]
&\scirc&&\scirc\\[-5pt]
\sbullet&&\scirc&&\sbullet
\end{array}$\\
\hline
\xymatrix{\circ \ar[r] & \circ & \circ \ar[l]} &
$\begin{array}{*{20}{c@{\hspace{3pt}}}}
&&\sbullet&&\\[-5pt]
&\sbullet&&\scirc\\[-5pt]
\scirc&&\sbullet&&\scirc
\end{array}$\\
\hline
\end{tabular}
\]

(II) The AR-quiver $\Gamma(\text{K}^{[-1.0]}(\text{proj}A))$  is
$$\xymatrix@C=5pt{&&{\begin{smallmatrix}  1&1&1 \end{smallmatrix}}\ar[rd]\ar@{..}[rr]&&{\begin{smallmatrix}  0&0&1[1] \end{smallmatrix}}\ar[rd]\\
&{\begin{smallmatrix}  0&1&1 \end{smallmatrix}}\ar@{..}[rr]\ar[ru]\ar[rd]&&
{\begin{smallmatrix}  1&1&0 \end{smallmatrix}}\ar[rd]\ar@{..}[rr]\ar[ru]&&
{\begin{smallmatrix}  0&1&1[1] \end{smallmatrix}}\ar[rd]&\\
{\begin{smallmatrix}  0&0&1 \end{smallmatrix}}\ar@{..}[rr]\ar[ru]&&
{\begin{smallmatrix}  0&1&0 \end{smallmatrix}}\ar@{..}[rr]\ar[ru]&&
{\begin{smallmatrix}  1&0&0 \end{smallmatrix}}\ar@{..}[rr]\ar[ru]&&
{\begin{smallmatrix}  1&1&1[1] \end{smallmatrix}}
}$$
Below we apply Algorithm \ref{al2} to all non-empty subsets $I$ of $Q_0$.

($1$) $I=\{1\}$. $A(I)=A/\langle e\rangle$ is given by the quiver $\stackrel{2}\circ\longrightarrow\stackrel{3}\circ$.
$A(I)$ has two basic tilting modules: $M_1={\begin{smallmatrix} 0&1&0 \end{smallmatrix}}\oplus{\begin{smallmatrix} 0&1&1 \end{smallmatrix}}$ and $M_2={\begin{smallmatrix} 0&0&1 \end{smallmatrix}}\oplus{\begin{smallmatrix} 0&1&1 \end{smallmatrix}}$.

\begin{itemize}
\item[(\romannumeral1)] For $M_1={\begin{smallmatrix} 0&1&0 \end{smallmatrix}}\oplus{\begin{smallmatrix} 0&1&1 \end{smallmatrix}}$, the corresponding silting complex is
\vspace{-10pt}
\[\begin{array}{*{20}{c@{\hspace{3pt}}}}
&&\scirc&&\scirc\\[-5pt]
&\sbullet&&\scirc&&\scirc\\[-5pt]
\scirc&&\sbullet&&\scirc&&\sbullet\\[-5pt]
\end{array}
\]
Its endomorphism algebra is given by the quiver $\circ\longrightarrow\circ~~~\circ.$

\item[(\romannumeral2)] For $M_2={\begin{smallmatrix} 0&0&1 \end{smallmatrix}}\oplus{\begin{smallmatrix} 0&1&1 \end{smallmatrix}}$, the corresponding silting complex is
\vspace{-10pt}
\[\begin{array}{*{20}{c@{\hspace{3pt}}}}
&&\scirc&&\scirc\\[-5pt]
&\sbullet&&\scirc&&\scirc\\[-5pt]
\sbullet&&\scirc&&\scirc&&\sbullet\\[-5pt]
\end{array}
\]
Its endomorphism algebra is given by the quiver $\circ\longrightarrow\circ~~~\circ.$
\end{itemize}

($2$) $I=\{2\}$. $A(I)=A/\langle e\rangle$ is given by the quiver $\stackrel{1}\circ~~~\stackrel{3}\circ$.
$A(I)$ has only one basic tilting module: $M_1={\begin{smallmatrix} 1&0&0 \end{smallmatrix}}\oplus{\begin{smallmatrix} 0&0&1 \end{smallmatrix}}$. The corresponding silting complex is
\vspace{-10pt}
\[\begin{array}{*{20}{c@{\hspace{3pt}}}}
&&\scirc&&\scirc\\[-5pt]
&\scirc&&\scirc&&\sbullet\\[-5pt]
\sbullet&&\scirc&&\sbullet&&\scirc\\[-5pt]
\end{array}
\]
Its endomorphism algebra is given by the quiver
$\circ\longrightarrow\circ~~~\circ.$

($3$) $I=\{1,2\}$. $A(I)=A/\langle e\rangle$ is given by the quiver $\stackrel{3}\circ$.
$A(I)$ has only one  basic tilting module: $M_1={\begin{smallmatrix} 0&0&1 \end{smallmatrix}}$. The corresponding silting complex is
\vspace{-10pt}
\[\begin{array}{*{20}{c@{\hspace{3pt}}}}
&&\scirc&&\scirc\\[-5pt]
&\scirc&&\scirc&&\sbullet\\[-5pt]
\sbullet&&\scirc&&\scirc&&\sbullet\\[-5pt]
\end{array}
\]
Its endomorphism algebra is given by the quiver $\circ\longrightarrow\circ~~~\circ.$

($4$) If $3\in I$, then $\tau T$ is a tilting module. We list all such $T$ below:
\vspace{-10pt}
\[\begin{array}{*{20}{c@{\hspace{3pt}}}}
&&\scirc&&\sbullet\\[-5pt]
&\scirc&&\sbullet&&\scirc\\[-5pt]
\scirc&&\scirc&&\sbullet&&\scirc\\[-5pt]
\end{array}~~~\text{}~~~
\begin{array}{*{20}{c@{\hspace{3pt}}}}
&&\scirc&&\sbullet\\[-5pt]
&\scirc&&\sbullet&&\scirc\\[-5pt]
\scirc&&\sbullet&&\scirc&&\scirc\\[-5pt]
\end{array}~~~\text{}~~~
\begin{array}{*{20}{c@{\hspace{3pt}}}}
&&\scirc&&\sbullet\\[-5pt]
&\scirc&&\scirc&&\scirc\\[-5pt]
\scirc&&\sbullet&&\scirc&&\sbullet\\[-5pt]
\end{array}~~~\text{}~~~
\begin{array}{*{20}{c@{\hspace{3pt}}}}
&&\scirc&&\sbullet\\[-5pt]
&\scirc&&\scirc&&\sbullet\\[-5pt]
\scirc&&\scirc&&\sbullet&&\scirc\\[-5pt]
\end{array}~~~\text{}~~~
\begin{array}{*{20}{c@{\hspace{3pt}}}}
&&\scirc&&\sbullet\\[-5pt]
&\scirc&&\scirc&&\sbullet\\[-5pt]
\scirc&&\scirc&&\scirc&&\sbullet\\[-5pt]
\end{array}
\]
\end{example}

\paragraph{\indent Note that the four silted algebras in the above (1), (2) and (3) are isomorphic, so there are  5 silted algebras  of type $\circ\longrightarrow \circ \longrightarrow \circ$, forming two families:}

\begin{itemize}
\item[(\romannumeral1)]tilted algebras of type $\mathbb{A}_3$:

$\circ\longrightarrow \circ \longrightarrow \circ$
~~~\text{,}~~~
$\circ\longrightarrow \circ \longleftarrow \circ$
~~~\text{,}~~~
$\circ\longleftarrow \circ \longrightarrow \circ$
~~~\text{,}~~~
$\xymatrix@C=5pt{\circ\ar[rr]\ar@/^/@{.}[rrrr]_{}
&&\circ\ar[rr]&&\circ}$

\item[(\romannumeral2)]tilted algebra of type
 $\mathbb{A}_2\amalg\mathbb{A}_1$:
 $\circ\longrightarrow \circ ~~~ \circ$
\end{itemize}

\begin{example}
Let $A$ be the path algebra of the quiver  $$\xymatrix@C=5pt{Q=\stackrel{1}\circ\longrightarrow
\stackrel{3}\circ \longleftarrow \stackrel{2}\circ}$$

 The AR-quiver of $\text{K}^{[-1.0]}(\text{proj}A)$  is
$$\xymatrix@C=5pt{&{\begin{smallmatrix}  0&1&1 \end{smallmatrix}}\ar@{..}[rr]\ar[rd]&&
{\begin{smallmatrix}  1&0&0 \end{smallmatrix}}\ar@{..}[rr]\ar[rd]&&
{\begin{smallmatrix}  1&1&0[1] \end{smallmatrix}}\\
{\begin{smallmatrix}  0&1&0 \end{smallmatrix}}\ar@{..}[rr]\ar[ru]\ar[rd]&&
{\begin{smallmatrix}  1&1&1 \end{smallmatrix}}\ar@{..}[rr]\ar[ru]\ar[rd]&&
{\begin{smallmatrix}  0&1&0[1] \end{smallmatrix}}\ar[ru]\ar[rd]\\
&{\begin{smallmatrix}  1&1&0 \end{smallmatrix}}\ar@{..}[rr]\ar[ru]&&
{\begin{smallmatrix}  0&0&1 \end{smallmatrix}}\ar@{..}[rr]\ar[ru]&&
{\begin{smallmatrix}  0&1&1[1] \end{smallmatrix}}
}$$

(I) We first apply Algorithm \ref{al1} to produce all tilting modules and compute the corresponding tilted algebras. These are the tilted algebras of type $Q$.
\[\begin{tabular}{|c|c|c|}
 \hline
No. & tilted algebras & tilting modules\\
\hline
(1)& \xymatrix{\circ\ar[r]&\circ\ar[r]&\circ} &
\newcommand{\scirc}{{\scriptstyle \circ}}
\newcommand{\sbullet}{{\scriptstyle\bullet}}
$\begin{array}{*{20}{c@{\hspace{3pt}}}}
&\scirc&&\sbullet\\[-5pt]
\scirc&&\sbullet\\[-5pt]
&\sbullet&&\scirc
\end{array}\hspace{5mm}
\begin{array}{*{20}{c@{\hspace{3pt}}}}
&\sbullet&&\scirc\\[-5pt]
\scirc&&\sbullet\\[-5pt]
&\scirc&&\sbullet\\[-5pt]
\end{array}$\\
\hline
(2)& \xymatrix{\circ&\circ\ar[r]\ar[l]&\circ} &
\newcommand{\scirc}{{\scriptstyle \circ}}
\newcommand{\sbullet}{{\scriptstyle\bullet}}
$\begin{array}{*{20}{c@{\hspace{3pt}}}}
&\sbullet&&\scirc\\[-5pt]
\scirc&&\sbullet\\[-5pt]
&\sbullet&&\scirc
\end{array}$\\
\hline
(3)& \xymatrix{\circ\ar[r]&\circ&\circ\ar[l]} &
\newcommand{\scirc}{{\scriptstyle \circ}}
\newcommand{\sbullet}{{\scriptstyle\bullet}}
$\begin{array}{*{20}{c@{\hspace{3pt}}}}
&\sbullet&&\scirc\\[-5pt]
\sbullet&&\scirc\\[-5pt]
&\sbullet&&\scirc
\end{array} \hspace{5mm}   \begin{array}{*{20}{c@{\hspace{3pt}}}}
&\scirc&&\sbullet\\[-5pt]
\scirc&&\sbullet\\[-5pt]
&\scirc&&\sbullet
\end{array}$\\
\hline
\end{tabular}
\]

(II) We apply Algorithm \ref{al2} to all non-empty subsets $I$ of $Q_0$. Due to Observation \ref{Ob}, we only list below the endomorphism algebras   $\text{End}_{\text{K}^b(\text{proj}A)}(T)$, where $T=M\oplus P[1]$ is a 2-term silting complex with $M\neq 0$ and $P\neq 0$ (\emph{i.e.} T has direct summands both on the left border and the right border of the AR quiver)
\[\begin{tabular}{|c|c|c|c|}
 \hline
No.& silted algebras & 2-term silting complexes &  tilted type\\
\hline
(4)&\xymatrix{\circ\ar[r]  & \circ ~~~ \circ} &
\newcommand{\scirc}{{\scriptstyle \circ}}
\newcommand{\sbullet}{{\scriptstyle\bullet}}
$\begin{array}{*{20}{c@{\hspace{3pt}}}}
&\scirc&&\scirc&&\scirc\\[-5pt]
\sbullet&&\scirc&&\scirc\\[-5pt]
&\sbullet&&\scirc&&\sbullet
\end{array}\hspace{5mm} \begin{array}{*{20}{c@{\hspace{3pt}}}}
&\sbullet&&\scirc&&\sbullet\\[-5pt]
\sbullet&&\scirc&&\scirc\\[-5pt]
&\scirc&&\scirc&&\scirc
\end{array}$&$\mathbb{A}_2
\amalg\mathbb{A}_1$\\
\hline
(5)&\xymatrix{\circ~~~ \circ ~~~ \circ} &
\newcommand{\scirc}{{\scriptstyle \circ}}
\newcommand{\sbullet}{{\scriptstyle\bullet}}
$\begin{array}{*{20}{c@{\hspace{3pt}}}}
&\scirc&&\scirc&&\sbullet\\[-5pt]
\sbullet&&\scirc&&\scirc\\[-5pt]
&\scirc&&\scirc&&\sbullet
\end{array}$&$\mathbb{A}_1
\amalg\mathbb{A}_1\amalg\mathbb{A}_1$\\
\hline
(6)&\xymatrix{\circ\ar[r]\ar@/^/@{.}[rr]_{}
&\circ\ar[r]&\circ}&
\newcommand{\scirc}{{\scriptstyle \circ}}
\newcommand{\sbullet}{{\scriptstyle\bullet}}
$\begin{array}{*{20}{c@{\hspace{3pt}}}}
&\scirc&&\sbullet&&\scirc\\[-5pt]
\scirc&&\scirc&&\scirc\\[-5pt]
&\sbullet&&\scirc&&\sbullet
\end{array}\hspace{5mm}\begin{array}{*{20}{c@{\hspace{3pt}}}}
&\sbullet&&\scirc&&\sbullet\\[-5pt]
\scirc&&\scirc&&\scirc\\[-5pt]
&\scirc&&\sbullet&&\scirc
\end{array} $& $\mathbb{A}_3$\\
\hline
\end{tabular}
\]

To summarise, there are 6 silted algebras of type $Q$, forming 3 families:
\begin{itemize}
\item[(\romannumeral1)]tilted algebra of type $\mathbb{A}_3$:
$(1)-(3), (6)$;

\item[(\romannumeral2)]tilted algebra of type
$\mathbb{A}_2\amalg\mathbb{A}_1$:
$(4)$;

\item[(\romannumeral3)]tilted algebra of type
$\mathbb{A}_1\amalg\mathbb{A}_1\amalg\mathbb{A}_1$:
$(5)$.
\end{itemize}

\end{example}

\subsection*{3.2.4 Type  $\mathbb{A}_4$}\label{4}

\paragraph{\indent According to \cite[Theorem 1]{ONFR}, there are 42 basic 2-term silting complexes, 14 of which are tilting modules. Up to isomorphism there are four quivers of type $\mathbb{A}_4$. Due to Lemma \ref{s}, we classify silted algebras for three of them.}

\begin{example}\label{A4}
Let $A$ be the path algebra of the quiver 
$$\xymatrix@C=5pt{Q= \stackrel{1}\circ\ar[rr]&&\stackrel{2}\circ\ar[rr]&&
\stackrel{3}\circ\ar[rr]&&\stackrel{4}\circ}$$
The AR-quiver of $\text{K}^{[-1.0]}(\text{proj}A)$  is
$$\xymatrix@C=2pt{&&&{\begin{smallmatrix}  1&1&1&1 \end{smallmatrix}}\ar[rd]\ar@{..}[rr] &&{\begin{smallmatrix}  0&0&0&1[1] \end{smallmatrix}}\ar[rd]\\
&& {\begin{smallmatrix}  0&1&1&1 \end{smallmatrix}}\ar@{..}[rr]\ar[ru]\ar[rd]&&
{\begin{smallmatrix}  1&1&1&0 \end{smallmatrix}}\ar[rd]\ar[ru]\ar@{..}[rr]&&{\begin{smallmatrix}  0&0&1&1[1] \end{smallmatrix}}\ar[rd]& \\
&{\begin{smallmatrix}  0&0&1&1 \end{smallmatrix}}\ar@{..}[rr]\ar[ru]\ar[rd]&&
{\begin{smallmatrix}  0&1&1&0 \end{smallmatrix}}\ar@{..}[rr]\ar[ru]\ar[rd]&&
{\begin{smallmatrix}  1&1&0&0 \end{smallmatrix}}\ar[rd]\ar@{..}[rr]\ar[ru]&&{\begin{smallmatrix}  0&1&1&1[1] \end{smallmatrix}}\ar[rd]\\
{\begin{smallmatrix}  0&0&0&1 \end{smallmatrix}}\ar@{..}[rr]\ar[ru]&&
{\begin{smallmatrix}  0&0&1&0 \end{smallmatrix}}\ar@{..}[rr]\ar[ru]&&
{\begin{smallmatrix}  0&1&0&0 \end{smallmatrix}}\ar@{..}[rr]\ar[ru]&&
{\begin{smallmatrix}  1&0&0&0 \end{smallmatrix}}\ar@{..}[rr]\ar[ru]&&{\begin{smallmatrix}  1&1&1&1[1] \end{smallmatrix}}
}$$


(I) Tilted algebras of type $Q$ are

\[\begin{tabular}{|c|c|c|}
 \hline
No. & tilted algebras & tilting modules\\
\hline
(1)&
\xymatrix{\circ \ar[r] & \circ\ar[r] & \circ\ar[r] & \circ} &\newcommand{\scirc}{{\scriptstyle \circ}}
\newcommand{\sbullet}{{\scriptstyle\bullet}} $\begin{array}{*{10}{c@{\hspace{1pt}}}}
&&&\sbullet\\[-9pt]
&&\sbullet&&\scirc\\[-9pt]
&\sbullet&&\scirc&&\scirc\\[-9pt]
\sbullet&&\scirc&&\scirc&&\scirc
\end{array} \hspace{5mm} \begin{array}{*{10}{c@{\hspace{1pt}}}}
&&&\sbullet\\[-9pt]
&&\scirc&&\sbullet\\[-9pt]
&\scirc&&\scirc&&\sbullet\\[-9pt]
\scirc&&\scirc&&\scirc&&\sbullet
\end{array}$\\
\hline
(2)&
\xymatrix{\circ\ar[r]&\circ\ar[r]&
\circ&\circ\ar[l]} &
\newcommand{\scirc}{{\scriptstyle \circ}}
\newcommand{\sbullet}{{\scriptstyle\bullet}} $\begin{array}{*{10}{c@{\hspace{1pt}}}}
&&&\sbullet\\[-9pt]
&&\sbullet&&\scirc\\[-9pt]
&\scirc&&\sbullet&&\scirc\\[-9pt]
\scirc&&\scirc&&\sbullet&&\scirc
\end{array}\hspace{5mm} \begin{array}{*{10}{c@{\hspace{1pt}}}}
&&&\sbullet\\[-9pt]
&&\sbullet&&\scirc\\[-9pt]
&\sbullet&&\scirc&&\scirc\\[-9pt]
\scirc&&\sbullet&&\scirc&&\scirc
\end{array}$\\
\hline
(3)&
\xymatrix{\circ\ar[r]&\circ&\circ
\ar[l]\ar[r]&\circ} &
\newcommand{\scirc}{{\scriptstyle \circ}}
\newcommand{\sbullet}{{\scriptstyle\bullet}}
$\begin{array}{*{10}{c@{\hspace{1pt}}}}
&&&\sbullet\\[-9pt]
&&\scirc&&\sbullet\\[-9pt]
&\scirc&&\sbullet&&\scirc\\[-9pt]
\scirc&&\scirc&&\sbullet&&\scirc
\end{array} \hspace{5mm} \begin{array}{*{10}{c@{\hspace{1pt}}}}
&&&\sbullet\\[-9pt]
&&\sbullet&&\scirc\\[-9pt]
&\scirc&&\sbullet&&\scirc\\[-9pt]
\scirc&&\sbullet&&\scirc&&\scirc
\end{array}$\\
\hline
(4)&
\xymatrix{\circ&\circ\ar[l]\ar[r]&
\circ\ar[r]&\circ} &\newcommand{\scirc}{{\scriptstyle \circ}}
\newcommand{\sbullet}{{\scriptstyle\bullet}} $\begin{array}{*{10}{c@{\hspace{1pt}}}}
&&&\sbullet\\[-9pt]
&&\scirc&&\sbullet\\[-9pt]
&\scirc&&\scirc&&\sbullet\\[-9pt]
\scirc&&\scirc&&\sbullet&&\scirc
\end{array} \hspace{5mm} \begin{array}{*{10}{c@{\hspace{1pt}}}}
&&&\sbullet\\[-9pt]
&&\scirc&&\sbullet\\[-9pt]
&\scirc&&\sbullet&&\scirc\\[-9pt]
\scirc&&\sbullet&&\scirc&&\scirc
\end{array}$\\
\hline
(5)&
\xymatrix{\circ\ar[r]\ar@/^/@{.}[rr]_{}&
\circ\ar[r]&\circ\ar[r]&\circ} &
\newcommand{\scirc}{{\scriptstyle \circ}}
\newcommand{\sbullet}{{\scriptstyle\bullet}}
$\begin{array}{*{10}{c@{\hspace{1pt}}}}
&&&\sbullet\\[-9pt]
&&\scirc&&\scirc\\[-9pt]
&\sbullet&&\scirc&&\scirc\\[-9pt]
\sbullet&&\scirc&&\scirc&&\sbullet
\end{array}$\\
\hline
(6)&
\xymatrix{\circ\ar[r]&
\circ\ar[r]\ar@/^/@{.}[rr]_{}&\circ\ar[r]&\circ}&
\newcommand{\scirc}{{\scriptstyle \circ}}
\newcommand{\sbullet}{{\scriptstyle\bullet}}
$\begin{array}{*{10}{c@{\hspace{1pt}}}}
&&&\sbullet\\[-9pt]
&&\scirc&&\scirc\\[-9pt]
&\scirc&&\scirc&&\sbullet\\[-9pt]
\sbullet&&\scirc&&\scirc&&\sbullet
\end{array}$\\
\hline
(7)&
\xymatrix{\circ\ar[r]&\circ&\circ\ar[l]
&\circ\ar[l]\ar@/_/@{.}[ll]_{}}&
\newcommand{\scirc}{{\scriptstyle \circ}}
\newcommand{\sbullet}{{\scriptstyle\bullet}}
$\begin{array}{*{10}{c@{\hspace{1pt}}}}
&&&\sbullet\\[-9pt]
&&\scirc&&\scirc\\[-9pt]
&\sbullet&&\scirc&&\scirc\\[-9pt]
\scirc&&\sbullet&&\scirc&&\sbullet
\end{array}$\\
\hline
(8)&
\xymatrix{\circ&\circ\ar[l]&\circ
\ar[l]\ar[r]\ar@/_/@{.}[ll]_{}&\circ}&
\newcommand{\scirc}{{\scriptstyle \circ}}
\newcommand{\sbullet}{{\scriptstyle\bullet}}
$\begin{array}{*{10}{c@{\hspace{1pt}}}}
&&&\sbullet\\[-9pt]
&&\scirc&&\scirc\\[-9pt]
&\scirc&&\scirc&&\sbullet\\[-9pt]
\sbullet&&\scirc&&\sbullet&&\scirc
\end{array}$\\
\hline
(9)&
\makecell[c]{
\xymatrix{\circ\ar[r]&\circ\ar[r]&\circ\\
&\circ\ar[u]\ar@/_/@{.}[ru]_{}
}}& \makecell[c]{
\newcommand{\scirc}{{\scriptstyle \circ}}
\newcommand{\sbullet}{{\scriptstyle\bullet}}
$\begin{array}{*{10}{c@{\hspace{1pt}}}}
&&&\sbullet\\[-9pt]
&&\sbullet&&\scirc\\[-9pt]
&\scirc&&\scirc&&\scirc\\[-9pt]
\sbullet&&\scirc&&\sbullet&&\scirc
\end{array}$}
\\
\hline
(10)&
\makecell[c]{
\xymatrix{\circ\ar[r]\ar@/_/@{.}[rd]_{}
&\circ\ar[d]\ar[r]&
\circ\\
&\circ
}}& \makecell[c]{
\newcommand{\scirc}{{\scriptstyle \circ}}
\newcommand{\sbullet}{{\scriptstyle\bullet}}
$\begin{array}{*{10}{c@{\hspace{1pt}}}}
&&&\sbullet\\[-9pt]
&&\scirc&&\sbullet\\[-9pt]
&\scirc&&\scirc&&\scirc\\[-9pt]
\scirc&&\sbullet&&\scirc&&\sbullet
\end{array}$}
\\
\hline
\end{tabular}
\]

\bigskip
(II) Silted algebras of the form $\text{End}_{\text{K}^b(\text{proj}A)}(M\oplus P[1])$ ($P\neq 0,~M\neq 0$) are

\newcommand{\scirc}{{\scriptstyle \circ}}
\newcommand{\sbullet}{{\scriptstyle\bullet}}

\[\begin{tabular}{|c|c|c|c|}
 \hline
No. &
silted algebras & 2-term silting complexes & tilted type\\
\hline
(11)&
\xymatrix{\circ\ar[r]&\circ\ar[r]
&\circ~~~\circ} & $\begin{array}{*{10}{c@{\hspace{1pt}}}}
&&&\scirc&&\scirc\\[-10pt]
&&\sbullet&&\scirc&&\scirc\\[-10pt]
&\scirc&&\sbullet&&\scirc&&\scirc\\[-10pt]
\scirc&&\scirc&&\sbullet&&\scirc&&\sbullet
\end{array}  \hspace{3mm} \begin{array}{*{10}{c@{\hspace{1pt}}}}
&&&\scirc&&\scirc\\[-10pt]
&&\sbullet&&\scirc&&\scirc\\[-10pt]
&\sbullet&&\scirc&&\scirc&&\scirc\\[-10pt]
\sbullet&&\scirc&&\scirc&&\scirc&&\sbullet
\end{array}   \hspace{3mm}  \begin{array}{*{10}{c@{\hspace{1pt}}}}
&&&\scirc&&\scirc\\[-10pt]
&&\scirc&&\scirc&&\sbullet\\[-10pt]
&\scirc&&\scirc&&\sbullet&&\scirc\\[-10pt]
\sbullet&&\scirc&&\sbullet&&\scirc&&\scirc
\end{array}   \hspace{3mm}  \begin{array}{*{10}{c@{\hspace{1pt}}}}
&&&\scirc&&\scirc\\[-10pt]
&&\scirc&&\scirc&&\sbullet\\[-10pt]
&\scirc&&\scirc&&\scirc&&\sbullet\\[-10pt]
\sbullet&&\scirc&&\scirc&&\scirc&&\sbullet
\end{array} $ & $\mathbb{A}_3\amalg\mathbb{A}_1$ \\
\hline
(12)&
\xymatrix{\circ&\circ\ar[r]\ar[l]
&\circ~~~\circ} & $
\begin{array}{*{10}{c@{\hspace{1pt}}}}
&&&\scirc&&\scirc\\[-10pt]
&&\sbullet&&\scirc&&\scirc\\[-10pt]
&\scirc&&\sbullet&&\scirc&&\scirc\\[-10pt]
\scirc&&\sbullet&&\scirc&&\scirc&&\sbullet
\end{array}   \hspace{3mm}  \begin{array}{*{10}{c@{\hspace{1pt}}}}
&&&\scirc&&\scirc\\[-10pt]
&&\scirc&&\scirc&&\sbullet\\[-10pt]
&\scirc&&\scirc&&\scirc&&\sbullet\\[-10pt]
\sbullet&&\scirc&&\scirc&&\sbullet&&\scirc
\end{array}$ & $\mathbb{A}_3\amalg\mathbb{A}_1$ \\
\hline
(13)&
\xymatrix{\circ\ar[r]&\circ
&\circ\ar[l]~~~\circ} & $
\begin{array}{*{10}{c@{\hspace{1pt}}}}
&&&\scirc&&\scirc\\[-10pt]
&&\sbullet&&\scirc&&\scirc\\[-10pt]
&\sbullet&&\scirc&&\scirc&&\scirc\\[-10pt]
\scirc&&\sbullet&&\scirc&&\scirc&&\sbullet
\end{array} \hspace{3mm}  \begin{array}{*{10}{c@{\hspace{1pt}}}}
&&&\scirc&&\scirc\\[-10pt]
&&\scirc&&\scirc&&\sbullet\\[-10pt]
&\scirc&&\scirc&&\sbullet&&\scirc\\[-10pt]
\sbullet&&\scirc&&\scirc&&\sbullet&&\scirc
\end{array}$ & $\mathbb{A}_3\amalg\mathbb{A}_1$ \\
\hline
(14)&
\xymatrix{\circ\ar[r]&\circ\ar[r]~~~
\circ\ar[r]&\circ} & $
\begin{array}{*{10}{c@{\hspace{1pt}}}}
&&&\scirc&&\scirc\\[-10pt]
&&\scirc&&\scirc&&\scirc\\[-10pt]
&\sbullet&&\scirc&&\scirc&&\sbullet\\[-10pt]
\scirc&&\sbullet&&\scirc&&\sbullet&&\scirc
\end{array}  \hspace{3mm} \begin{array}{*{10}{c@{\hspace{1pt}}}}
&&&\scirc&&\scirc\\[-10pt]
&&\scirc&&\scirc&&\scirc\\[-10pt]
&\sbullet&&\scirc&&\scirc&&\sbullet\\[-10pt]
\sbullet&&\scirc&&\scirc&&\sbullet&&\scirc
\end{array}  \hspace{3mm} \begin{array}{*{10}{c@{\hspace{1pt}}}}
&&&\scirc&&\scirc\\[-10pt]
&&\scirc&&\scirc&&\scirc\\[-10pt]
&\sbullet&&\scirc&&\scirc&&\sbullet\\[-10pt]
\scirc&&\sbullet&&\scirc&&\scirc&&\sbullet
\end{array}  \hspace{3mm}   \begin{array}{*{10}{c@{\hspace{1pt}}}}
&&&\scirc&&\scirc\\[-10pt]
&&\scirc&&\scirc&&\scirc\\[-10pt]
&\sbullet&&\scirc&&\scirc&&\sbullet\\[-10pt]
\sbullet&&\scirc&&\scirc&&\scirc&&\sbullet
\end{array} $ & $\mathbb{A}_2\amalg\mathbb{A}_2$ \\
\hline
(15)&

\xymatrix{\circ\ar[r]\ar@/^/@{.}[rr]_{}&\circ\ar[r]&
\circ~~~\circ} & $
\begin{array}{*{10}{c@{\hspace{1pt}}}}
&&&\scirc&&\scirc\\[-10pt]
&&\sbullet&&\scirc&&\scirc\\[-10pt]
&\scirc&&\scirc&&\scirc&&\scirc\\[-10pt]
\sbullet&&\scirc&&\sbullet&&\scirc&&\sbullet
\end{array}   \hspace{3mm} \begin{array}{*{10}{c@{\hspace{1pt}}}}
&&&\scirc&&\scirc\\[-10pt]
&&\scirc&&\scirc&&\sbullet\\[-10pt]
&\scirc&&\scirc&&\scirc&&\scirc\\[-10pt]
\sbullet&&\scirc&&\sbullet&&\scirc&&\sbullet
\end{array}$ & $\mathbb{A}_3\amalg\mathbb{A}_1$ \\
\hline
%
\end{tabular}
\]

To summarise, there are 15 silted algebras  of type $Q$, forming 3 families:
\begin{itemize}
\item[(\romannumeral1)]tilted algebras of type $\mathbb{A}_4$: $(1)-(10)$;
\item[(\romannumeral2)]tilted algebras of type $\mathbb{A}_3\amalg\mathbb{A}_1$: $(11)-(13)$, $(15)$;
\item[(\romannumeral3)]tilted algebras of type $\mathbb{A}_2\amalg\mathbb{A}_2$: $(14)$.
\end{itemize}

\end{example}

\begin{example}
Let $A$ be the path algebra of the quiver
$$\xymatrix@C=5pt{Q= \stackrel{1}\circ\ar[rr]&&\stackrel{2}\circ&&
\stackrel{3}\circ\ar[rr]\ar[ll]&&\stackrel{4}\circ}$$
The AR-quiver of  $\text{K}^{[-1.0]}(\text{proj}A)$  is

$$\xymatrix@C=3pt{&{\begin{smallmatrix}  1&1&0&0 \end{smallmatrix}}\ar@{..}[rr]\ar[rd]&&
{\begin{smallmatrix}  0&0&1&1 \end{smallmatrix}}\ar@{..}[rr]\ar[rd]&&
{\begin{smallmatrix}  0&0&0&1[1] \end{smallmatrix}}\ar[rd]\\
{\begin{smallmatrix}  0&1&0&0 \end{smallmatrix}}\ar@{..}[rr]\ar[ru]\ar[rd]&&
{\begin{smallmatrix}  1&1&1&1 \end{smallmatrix}}\ar@{..}[rr]\ar[ru]\ar[rd]&&
{\begin{smallmatrix}  0&0&1&0 \end{smallmatrix}}\ar@{..}[rr]\ar[ru]\ar[rd]&&
{\begin{smallmatrix}  0&1&1&1[1] \end{smallmatrix}}\\
&{\begin{smallmatrix}  0&1&1&1 \end{smallmatrix}}\ar@{..}[rr]\ar[ru]\ar[rd]&&
{\begin{smallmatrix}  1&1&1&0 \end{smallmatrix}}\ar@{..}[rr]\ar[ru]\ar[rd]&&
{\begin{smallmatrix}  0&1&0&0[1] \end{smallmatrix}}\ar[ru]\ar[rd]\\
{\begin{smallmatrix}  0&0&0&1 \end{smallmatrix}}\ar@{..}[rr]\ar[ru]&&
{\begin{smallmatrix}  0&1&1&0 \end{smallmatrix}}\ar@{..}[rr]\ar[ru]&&
{\begin{smallmatrix}  1&0&0&0 \end{smallmatrix}}\ar@{..}[rr]\ar[ru]&&
{\begin{smallmatrix}  1&1&0&0[1] \end{smallmatrix}}
}$$

(I) Tilted algebras of type $Q$ are

\[\begin{tabular}{|c|c|c|}
 \hline
No. & tilted algebras & tilting modules\\
\hline
(1)& \xymatrix{\circ \ar[r] & \circ\ar[r] & \circ\ar[r] & \circ} &
\newcommand{\scirc}{{\scriptstyle \circ}}
\newcommand{\sbullet}{{\scriptstyle\bullet}}
$\begin{array}{*{10}{c@{\hspace{1pt}}}}
&\sbullet&&\scirc\\[-9pt]
\scirc&&\sbullet&&\scirc\\[-9pt]
&\scirc&&\sbullet\\[-9pt]
\scirc&&\scirc&&\sbullet
\end{array}   \hspace{5mm} \begin{array}{*{10}{c@{\hspace{1pt}}}}
&\scirc&&\sbullet\\[-9pt]
\scirc&&\sbullet&&\scirc\\[-9pt]
&\sbullet&&\scirc\\[-9pt]
\sbullet&&\scirc&&\scirc
\end{array}$\\
\hline
(2)& \xymatrix{\circ \ar[r] & \circ\ar[r] & \circ & \circ\ar[l]} &
\newcommand{\scirc}{{\scriptstyle \circ}}
\newcommand{\sbullet}{{\scriptstyle\bullet}}
$\begin{array}{*{10}{c@{\hspace{1pt}}}}
&\scirc&&\sbullet\\[-9pt]
\scirc&&\sbullet&&\scirc\\[-9pt]
&\sbullet&&\scirc\\[-9pt]
\scirc&&\sbullet&&\scirc
\end{array}   \hspace{5mm}   \begin{array}{*{10}{c@{\hspace{1pt}}}}
&\sbullet&&\scirc\\[-9pt]
\sbullet&&\scirc&&\scirc\\[-9pt]
&\sbullet&&\scirc\\[-9pt]
\scirc&&\sbullet&&\scirc
\end{array} \hspace{5mm}   \begin{array}{*{10}{c@{\hspace{1pt}}}}
&\scirc&&\sbullet\\[-9pt]
\scirc&&\sbullet&&\scirc\\[-9pt]
&\scirc&&\sbullet\\[-9pt]
\scirc&&\scirc&&\sbullet
\end{array}$\\
\hline
(3)& \xymatrix{\circ \ar[r] & \circ & \circ\ar[l]\ar[r] & \circ} &
\newcommand{\scirc}{{\scriptstyle \circ}}
\newcommand{\sbullet}{{\scriptstyle\bullet}}
$\begin{array}{*{10}{c@{\hspace{1pt}}}}
&\sbullet&&\scirc\\[-9pt]
\scirc&&\sbullet&&\scirc\\[-9pt]
&\sbullet&&\scirc\\[-9pt]
\scirc&&\sbullet&&\scirc
\end{array} \hspace{5mm}   \begin{array}{*{10}{c@{\hspace{1pt}}}}
&\sbullet&&\scirc\\[-9pt]
\sbullet&&\scirc&&\scirc\\[-9pt]
&\sbullet&&\scirc\\[-9pt]
\sbullet&&\scirc&&\scirc
\end{array}\hspace{5mm} \begin{array}{*{10}{c@{\hspace{1pt}}}}
&\scirc&&\sbullet\\[-9pt]
\scirc&&\scirc&&\sbullet\\[-9pt]
&\scirc&&\sbullet\\[-9pt]
\scirc&&\scirc&&\sbullet
\end{array}
 \hspace{5mm}
 \begin{array}{*{10}{c@{\hspace{1pt}}}}
&\scirc&&\sbullet\\[-9pt]
\scirc&&\sbullet&&\scirc\\[-9pt]
&\scirc&&\sbullet\\[-9pt]
\scirc&&\sbullet&&\scirc
\end{array}$ \\
\hline
(4)& \xymatrix{\circ  & \circ\ar[r]\ar[l] & \circ\ar[r] & \circ} &
\newcommand{\scirc}{{\scriptstyle \circ}}
\newcommand{\sbullet}{{\scriptstyle\bullet}}
$\begin{array}{*{10}{c@{\hspace{1pt}}}}
&\sbullet&&\scirc\\[-9pt]
\scirc&&\sbullet&&\scirc\\[-9pt]
&\scirc&&\sbullet\\[-9pt]
\scirc&&\sbullet&&\scirc
\end{array}   \hspace{5mm}   \begin{array}{*{10}{c@{\hspace{1pt}}}}
&\sbullet&&\scirc\\[-9pt]
\scirc&&\sbullet&&\scirc\\[-9pt]
&\sbullet&&\scirc\\[-9pt]
\sbullet&&\scirc&&\scirc
\end{array} \hspace{5mm}\begin{array}{*{10}{c@{\hspace{1pt}}}}
&\scirc&&\sbullet\\[-9pt]
\scirc&&\scirc&&\sbullet\\[-9pt]
&\scirc&&\sbullet\\[-9pt]
\scirc&&\sbullet&&\scirc
\end{array} $\\
\hline
(5)& \makecell[c]{\xymatrix{\circ\ar[r]&\circ\ar[r]&\circ\\
&\circ\ar[u]\ar@/_/@{.}[ru]_{}}} &
\newcommand{\scirc}{{\scriptstyle \circ}}
\newcommand{\sbullet}{{\scriptstyle\bullet}}
$\begin{array}{*{10}{c@{\hspace{1pt}}}}
&\scirc&&\sbullet\\[-9pt]
\scirc&&\sbullet&&\scirc\\[-9pt]
&\scirc&&\scirc\\[-9pt]
\sbullet&&\scirc&&\sbullet
\end{array}$  \\
\hline
(6) &  \makecell[c]{\xymatrix{\circ\ar[r]\ar@/_/@{.}[rd]_{}&\circ\ar[d]
\ar[r]&\circ\\&\circ}}&
\newcommand{\scirc}{{\scriptstyle \circ}}
\newcommand{\sbullet}{{\scriptstyle\bullet}}
$\begin{array}{*{10}{c@{\hspace{1pt}}}}
&\sbullet&&\scirc\\[-9pt]
\scirc&&\sbullet&&\scirc\\[-9pt]
&\scirc&&\scirc\\[-9pt]
\sbullet&&\scirc&&\sbullet
\end{array}$\\
\hline
\end{tabular}
\]

(II) Silted algebras of the form $\text{End}_{\text{K}^b(\text{proj}A)}(M\oplus P[1])$ ($P\neq 0,~M\neq 0$) are
\[\begin{tabular}{|c|c|c|c|}
 \hline
No.& silted algebras & 2-term silting complexes &  tilted type\\
\hline
(7)&\xymatrix{\circ\ar[r]\ar@/^/@{.}[rr]_{}&
\circ\ar[r]&\circ\ar[r]&\circ}&
\newcommand{\scirc}{{\scriptstyle \circ}}
\newcommand{\sbullet}{{\scriptstyle\bullet}}
$\begin{array}{*{10}{c@{\hspace{1pt}}}}
&\scirc&&\sbullet&&\scirc\\[-7pt]
\scirc&&\scirc&&\scirc&&\scirc\\[-7pt]
&\sbullet&&\scirc&&\scirc\\[-7pt]
\sbullet&&\scirc&&\scirc&&\sbullet
\end{array}$&$\mathbb{A}_4$\\
\hline
(8)&\xymatrix{\circ\ar[r]&
\circ\ar[r]\ar@/^/@{.}[rr]_{}&\circ\ar[r]&\circ}&
\newcommand{\scirc}{{\scriptstyle \circ}}
\newcommand{\sbullet}{{\scriptstyle\bullet}}
$\begin{array}{*{10}{c@{\hspace{1pt}}}}
&\scirc&&\sbullet&&\scirc\\[-7pt]
\scirc&&\scirc&&\scirc&&\scirc\\[-7pt]
&\scirc&&\scirc&&\sbullet\\[-7pt]
\sbullet&&\scirc&&\scirc&&\sbullet
\end{array}$&$\mathbb{A}_4$\\
\hline
(9)&\xymatrix{\circ\ar[r]&\circ&\circ\ar[l]
&\circ\ar[l]\ar@/_/@{.}[ll]_{}}&
\newcommand{\scirc}{{\scriptstyle \circ}}
\newcommand{\sbullet}{{\scriptstyle\bullet}}
$\begin{array}{*{10}{c@{\hspace{1pt}}}}
&\scirc&&\sbullet&&\scirc\\[-7pt]
\scirc&&\scirc&&\scirc&&\scirc\\[-7pt]
&\sbullet&&\scirc&&\scirc\\[-7pt]
\scirc&&\sbullet&&\scirc&&\sbullet
\end{array}\hspace{5mm} \begin{array}{*{10}{c@{\hspace{1pt}}}}
&\sbullet&&\scirc&&\sbullet\\[-7pt]
\sbullet&&\scirc&&\scirc&&\scirc\\[-7pt]
&\scirc&&\scirc&&\scirc\\[-7pt]
\scirc&&\sbullet&&\scirc&&\scirc
\end{array}$& $\mathbb{A}_4$\\
\hline

(10)&\xymatrix{\circ&\circ\ar[l]&\circ
\ar[l]\ar[r]\ar@/_/@{.}[ll]_{}&\circ}&
\newcommand{\scirc}{{\scriptstyle \circ}}
\newcommand{\sbullet}{{\scriptstyle\bullet}}
$\begin{array}{*{10}{c@{\hspace{1pt}}}}
&\scirc&&\sbullet&&\scirc\\[-7pt]
\scirc&&\scirc&&\scirc&&\scirc\\[-7pt]
&\scirc&&\scirc&&\sbullet\\[-7pt]
\sbullet&&\scirc&&\sbullet&&\scirc
\end{array}  \hspace{5mm} \begin{array}{*{10}{c@{\hspace{1pt}}}}
&\sbullet&&\scirc&&\sbullet\\[-7pt]
\scirc&&\scirc&&\scirc&&\sbullet\\[-7pt]
&\scirc&&\scirc&&\scirc\\[-7pt]
\scirc&&\scirc&&\sbullet&&\scirc
\end{array}$&$\mathbb{A}_4$\\
\hline

(5)&\makecell[c]{\xymatrix{\circ\ar[r]&\circ\ar[r]&\circ\\
&\circ\ar[u]\ar@/_/@{.}[ru]_{}
}}&
\newcommand{\scirc}{{\scriptstyle \circ}}
\newcommand{\sbullet}{{\scriptstyle\bullet}}
$\begin{array}{*{10}{c@{\hspace{1pt}}}}
&\sbullet&&\scirc&&\sbullet\\[-7pt]
\scirc&&\scirc&&\scirc&&\scirc\\[-7pt]
&\scirc&&\sbullet&&\scirc\\[-7pt]
\scirc&&\scirc&&\sbullet&&\scirc
\end{array}$
&$\mathbb{A}_4$\\
\hline
(6)&\makecell[c]{\xymatrix{\circ\ar[r]\ar@/_/@{.}[rd]_{}
&\circ\ar[d]\ar[r]&
\circ\\
&\circ
}}&
\newcommand{\scirc}{{\scriptstyle \circ}}
\newcommand{\sbullet}{{\scriptstyle\bullet}}
$\begin{array}{*{10}{c@{\hspace{1pt}}}}
&\sbullet&&\scirc&&\sbullet\\[-7pt]
\scirc&&\scirc&&\scirc&&\scirc\\[-7pt]
&\scirc&&\sbullet&&\scirc\\[-7pt]
\scirc&&\sbullet&&\scirc&&\scirc
\end{array}$&$\mathbb{A}_4$\\
\hline

(11)&\xymatrix{\circ\ar[r]&\circ\ar[r]
&\circ~~~\circ}&
\newcommand{\scirc}{{\scriptstyle \circ}}
\newcommand{\sbullet}{{\scriptstyle\bullet}}
$\begin{array}{*{10}{c@{\hspace{1pt}}}}
&\scirc&&\scirc&&\scirc\\[-7pt]
\sbullet&&\scirc&&\scirc&&\scirc\\[-7pt]
&\sbullet&&\scirc&&\scirc\\[-7pt]
\scirc&&\sbullet&&\scirc&&\sbullet
\end{array}   \hspace{5mm}  \begin{array}{*{10}{c@{\hspace{1pt}}}}
&\scirc&&\scirc&&\scirc\\[-7pt]
\scirc&&\scirc&&\scirc&&\sbullet\\[-7pt]
&\scirc&&\scirc&&\sbullet\\[-7pt]
\sbullet&&\scirc&&\sbullet&&\scirc
\end{array}$& $\mathbb{A}_3\amalg\mathbb{A}_1$\\
\hline
(12)&\xymatrix{\circ&\circ\ar[r]\ar[l]
&\circ~~~\circ}&
\newcommand{\scirc}{{\scriptstyle \circ}}
\newcommand{\sbullet}{{\scriptstyle\bullet}}
$\begin{array}{*{10}{c@{\hspace{1pt}}}}
&\scirc&&\scirc&&\scirc\\[-7pt]
\sbullet&&\scirc&&\scirc&&\scirc\\[-7pt]
&\sbullet&&\scirc&&\scirc\\[-7pt]
\sbullet&&\scirc&&\scirc&&\sbullet
\end{array}$&$\mathbb{A}_3\amalg\mathbb{A}_1$\\
\hline

(13)&\xymatrix{\circ\ar[r]\ar@/^/@{.}[rr]_{}&
\circ\ar[r]&\circ~~~\circ}&
\newcommand{\scirc}{{\scriptstyle \circ}}
\newcommand{\sbullet}{{\scriptstyle\bullet}}
$\begin{array}{*{10}{c@{\hspace{1pt}}}}
&\sbullet&&\scirc&&\scirc\\[-7pt]
\scirc&&\scirc&&\scirc&&\sbullet\\[-7pt]
&\scirc&&\scirc&&\scirc\\[-7pt]
\sbullet&&\scirc&&\sbullet&&\scirc
\end{array} \hspace{5mm} \begin{array}{*{10}{c@{\hspace{1pt}}}}
&\scirc&&\scirc&&\sbullet\\[-7pt]
\sbullet&&\scirc&&\scirc&&\scirc\\[-7pt]
&\scirc&&\scirc&&\scirc\\[-7pt]
\scirc&&\sbullet&&\scirc&&\sbullet
\end{array}$&$\mathbb{A}_3\amalg\mathbb{A}_1$\\
\hline
(14)&\xymatrix{\circ\ar[r]&\circ
&\circ\ar[l]~~~\circ}&
\newcommand{\scirc}{{\scriptstyle \circ}}
\newcommand{\sbullet}{{\scriptstyle\bullet}}
$\begin{array}{*{10}{c@{\hspace{1pt}}}}
&\scirc&&\scirc&&\scirc\\[-7pt]
\scirc&&\scirc&&\scirc&&\sbullet\\[-7pt]
&\scirc&&\scirc&&\sbullet\\[-7pt]
\sbullet&&\scirc&&\scirc&&\sbullet
\end{array}$&$\mathbb{A}_3\amalg\mathbb{A}_1$\\
\hline

(15)&\xymatrix{\circ\ar[r]&\circ\ar[r]~~~
\circ\ar[r]&\circ}&
\newcommand{\scirc}{{\scriptstyle \circ}}
\newcommand{\sbullet}{{\scriptstyle\bullet}}
$\begin{array}{*{10}{c@{\hspace{1pt}}}}
&\sbullet&&\scirc&&\sbullet\\[-7pt]
\sbullet&&\scirc&&\scirc&&\sbullet\\[-7pt]
&\scirc&&\scirc&&\scirc\\[-7pt]
\scirc&&\scirc&&\scirc&&\scirc
\end{array}$&$\mathbb{A}_2\amalg\mathbb{A}_2$\\
\hline
(16)&\xymatrix{\circ\ar[r]&\circ~~~
\circ~~~\circ}&
\newcommand{\scirc}{{\scriptstyle \circ}}
\newcommand{\sbullet}{{\scriptstyle\bullet}}
$\begin{array}{*{10}{c@{\hspace{1pt}}}}
&\sbullet&&\scirc&&\scirc\\[-7pt]
\sbullet&&\scirc&&\scirc&&\sbullet\\[-7pt]
&\scirc&&\scirc&&\scirc\\[-7pt]
\sbullet&&\scirc&&\scirc&&\scirc
\end{array} \hspace{5mm}   \begin{array}{*{10}{c@{\hspace{1pt}}}}
&\scirc&&\scirc&&\sbullet\\[-7pt]
\sbullet&&\scirc&&\scirc&&\sbullet\\[-7pt]
&\scirc&&\scirc&&\scirc\\[-7pt]
\scirc&&\scirc&&\scirc&&\sbullet
\end{array}$&$\mathbb{A}_2\amalg\mathbb{A}_1\amalg\mathbb{A}_1$\\
\hline
(17)&\xymatrix{\circ~~~
\circ~~~\circ~~~\circ}&
\newcommand{\scirc}{{\scriptstyle \circ}}
\newcommand{\sbullet}{{\scriptstyle\bullet}}
$\begin{array}{*{10}{c@{\hspace{1pt}}}}
&\scirc&&\scirc&&\scirc\\[-7pt]
\sbullet&&\scirc&&\scirc&&\sbullet\\[-7pt]
&\scirc&&\scirc&&\scirc\\[-7pt]
\sbullet&&\scirc&&\scirc&&\sbullet
\end{array}$&$\mathbb{A}_1\amalg\mathbb{A}_1\amalg\mathbb{A}_1\amalg\mathbb{A}_1$\\
\hline
\end{tabular}
\]

 To summarise, there are 17 silted algebras  of type $Q$, forming 5 families:
\begin{itemize}
\item[(\romannumeral1)]tilted algebras of type $\mathbb{A}_4$: $(1)-(10)$;
\item[(\romannumeral2)]tilted algebras of type $\mathbb{A}_3\amalg\mathbb{A}_1$: $(11)-(14)$;
\item[(\romannumeral3)]tilted algebras of type $\mathbb{A}_2\amalg\mathbb{A}_2$: $(15)$;
\item[(\romannumeral4)]tilted algebras of type $\mathbb{A}_2\amalg\mathbb{A}_1\amalg\mathbb{A}_1$: $(16)$;
\item[(\romannumeral5)]tilted algebras of type $\mathbb{A}_1\amalg\mathbb{A}_1\amalg\mathbb{A}_1\amalg\mathbb{A}_1$: $(17)$.
\end{itemize}

\end{example}
%

\begin{example}
Let $A$ be the path algebra of the quiver
$$\xymatrix@C=5pt{Q= \stackrel{1}\circ\ar[rr]&&\stackrel{2}\circ\ar[rr]&&
\stackrel{3}\circ&&\stackrel{4}\circ\ar[ll]}$$
The AR-quiver of  $\text{K}^{[-1.0]}(\text{proj}A)$  is

$$\xymatrix@C=0.1pt{&&{\begin{smallmatrix}  1&1&1&0 \end{smallmatrix}}\ar[rd]\ar@{..}[rr] &&{\begin{smallmatrix}  0&0&0&1 \end{smallmatrix}}\ar[rd]\ar@{..}[rr]
&&{\begin{smallmatrix}  0&0&1&1[1] \end{smallmatrix}}\\
& {\begin{smallmatrix}  0&1&1&0 \end{smallmatrix}}\ar@{..}[rr]\ar[ru]\ar[rd]&&
{\begin{smallmatrix}  1&1&1&1 \end{smallmatrix}}\ar[rd]\ar[ru]\ar@{..}[rr]&&{\begin{smallmatrix}  0&0&1&0[1] \end{smallmatrix}}\ar[rd]\ar[ru] \\
{\begin{smallmatrix}  0&0&1&0 \end{smallmatrix}}\ar@{..}[rr]\ar[ru]\ar[rd]&&
{\begin{smallmatrix}  0&1&1&1 \end{smallmatrix}}\ar@{..}[rr]\ar[ru]\ar[rd]&&
{\begin{smallmatrix}  1&1&0&0 \end{smallmatrix}}\ar[rd]\ar@{..}[rr]\ar[ru]&&{\begin{smallmatrix}  0&1&1&0[1] \end{smallmatrix}}\ar[rd]\\
&{\begin{smallmatrix}  0&0&1&1 \end{smallmatrix}}\ar@{..}[rr]\ar[ru]&&
{\begin{smallmatrix}  0&1&0&0 \end{smallmatrix}}\ar@{..}[rr]\ar[ru]&&
{\begin{smallmatrix}  1&0&0&0 \end{smallmatrix}}\ar@{..}[rr]\ar[ru]&&
{\begin{smallmatrix}  1&1&1&0[1] \end{smallmatrix}}
}$$

(I) Tilted algebras of type $Q$ are

\[\begin{tabular}{|c|c|c|}
 \hline
No. & tilted algebras & tilting modules\\
\hline
(1)& \xymatrix{\circ \ar[r] & \circ\ar[r] & \circ\ar[r] & \circ} &
\newcommand{\scirc}{{\scriptstyle \circ}}
\newcommand{\sbullet}{{\scriptstyle\bullet}}
$\begin{array}{*{10}{c@{\hspace{1pt}}}}
&&\sbullet&&\scirc\\[-9pt]
&\scirc&&\sbullet\\[-9pt]
\scirc&&\scirc&&\sbullet\\[-9pt]
&\scirc&&\scirc&&\sbullet\\
\end{array}  \hspace{3mm} \begin{array}{*{10}{c@{\hspace{1pt}}}}
&&\scirc&&\sbullet\\[-9pt]
&\scirc&&\sbullet\\[-9pt]
\scirc&&\sbullet&&\scirc\\[-9pt]
&\sbullet&&\scirc&&\scirc
\end{array}$\\
\hline
(2)& \xymatrix{\circ \ar[r] & \circ\ar[r] & \circ & \circ\ar[l]} &
\newcommand{\scirc}{{\scriptstyle \circ}}
\newcommand{\sbullet}{{\scriptstyle\bullet}}
$\begin{array}{*{10}{c@{\hspace{1pt}}}}
&&\sbullet&&\scirc\\[-9pt]
&\sbullet&&\scirc\\[-9pt]
\scirc&&\sbullet&&\scirc\\[-9pt]
&\scirc&&\sbullet&&\scirc
\end{array}\hspace{3mm}\begin{array}{*{10}{c@{\hspace{1pt}}}}
&&\sbullet&&\scirc\\[-9pt]
&\sbullet&&\scirc\\[-9pt]
\sbullet&&\scirc&&\scirc\\[-8pt]
&\sbullet&&\scirc&&\scirc
\end{array}\hspace{3mm}\begin{array}{*{10}{c@{\hspace{1pt}}}}
&&\scirc&&\sbullet\\[-9pt]
&\scirc&&\sbullet\\[-9pt]
\scirc&&\scirc&&\sbullet\\[-9pt]
&\scirc&&\scirc&&\sbullet
\end{array}\hspace{3mm}\begin{array}{*{10}{c@{\hspace{1pt}}}}
&&\scirc&&\sbullet\\[-9pt]
&\scirc&&\sbullet\\[-9pt]
\scirc&&\sbullet&&\scirc\\[-9pt]
&\scirc&&\sbullet&&\scirc
\end{array}$\\
\hline

(3)& \xymatrix{\circ \ar[r] & \circ & \circ\ar[l]\ar[r] & \circ} &
\newcommand{\scirc}{{\scriptstyle \circ}}
\newcommand{\sbullet}{{\scriptstyle\bullet}}
$\begin{array}{*{10}{c@{\hspace{1pt}}}}
&&\sbullet&&\scirc\\[-9pt]
&\scirc&&\sbullet\\[-9pt]
\scirc&&\sbullet&&\scirc\\[-9pt]
&\scirc&&\sbullet&&\scirc
\end{array}\hspace{3mm}\begin{array}{*{10}{c@{\hspace{1pt}}}}
&&\sbullet&&\scirc\\[-9pt]
&\sbullet&&\scirc\\[-9pt]
\scirc&&\sbullet&&\scirc\\[-9pt]
&\sbullet&&\scirc&&\scirc
\end{array}\hspace{3mm}\begin{array}{*{10}{c@{\hspace{1pt}}}}
&&\scirc&&\sbullet\\[-9pt]
&\scirc&&\sbullet\\[-9pt]
\scirc&&\scirc&&\sbullet\\[-9pt]
&\scirc&&\sbullet&&\scirc
\end{array}$\\
\hline
(4)& \xymatrix{\circ  & \circ\ar[r]\ar[l] & \circ\ar[r] & \circ} &
\newcommand{\scirc}{{\scriptstyle \circ}}
\newcommand{\sbullet}{{\scriptstyle\bullet}}
$\begin{array}{*{10}{c@{\hspace{1pt}}}}
&&\sbullet&&\scirc\\[-9pt]
&\scirc&&\sbullet\\[-9pt]
\scirc&&\scirc&&\sbullet\\[-9pt]
&\scirc&&\sbullet&&\scirc
\end{array}\hspace{3mm} \begin{array}{*{10}{c@{\hspace{1pt}}}}
&&\sbullet&&\scirc\\[-9pt]
&\scirc&&\sbullet\\[-9pt]
\scirc&&\sbullet&&\scirc\\[-9pt]
&\sbullet&&\scirc&&\scirc
\end{array}$\\
\hline
(5)&\xymatrix{\circ\ar[r]&\circ&\circ\ar[l]
&\circ\ar[l]\ar@/_/@{.}[ll]_{}}&
\newcommand{\scirc}{{\scriptstyle \circ}}
\newcommand{\sbullet}{{\scriptstyle\bullet}}
$\begin{array}{*{10}{c@{\hspace{1pt}}}}
&&\sbullet&&\scirc\\[-9pt]
&\scirc&&\scirc\\[-9pt]
\sbullet&&\scirc&&\scirc\\[-9pt]
&\sbullet&&\scirc&&\sbullet
\end{array}$\\
\hline
(6)&\makecell[c]{\xymatrix{\circ\ar[r]&\circ\ar[r]&\circ\\
&\circ\ar[u]\ar@/_/@{.}[ru]_{}
}}&
\newcommand{\scirc}{{\scriptstyle \circ}}
\newcommand{\sbullet}{{\scriptstyle\bullet}}
$\begin{array}{*{10}{c@{\hspace{1pt}}}}
&&\scirc&&\sbullet\\[-9pt]
&\scirc&&\sbullet\\[-9pt]
\scirc&&\scirc&&\scirc\\[-9pt]
&\sbullet&&\scirc&&\sbullet
\end{array}$\\
\hline
(7)&\makecell[c]{\xymatrix{\circ\ar[r]\ar@/_/@{.}[rd]_{}
&\circ\ar[d]\ar[r]&
\circ\\
&\circ
}}&
\newcommand{\scirc}{{\scriptstyle \circ}}
\newcommand{\sbullet}{{\scriptstyle\bullet}}
$\begin{array}{*{10}{c@{\hspace{1pt}}}}
&&\sbullet&&\scirc\\[-9pt]
&\scirc&&\sbullet\\[-9pt]
\scirc&&\scirc&&\scirc\\[-9pt]
&\sbullet&&\scirc&&\sbullet
\end{array}$\\
\hline

\end{tabular}
\]

(II) Silted algebras of the form $\text{End}_{\text{K}^b(\text{proj}A)}(M\oplus P[1])$ ($P\neq 0,~M\neq 0$) are
\[\begin{tabular}{|c|c|c|c|}
 \hline

No.& silted algebras & 2-term silting complexes &  tilted type\\
\hline

(8)&\xymatrix{\circ\ar[r]&\circ\ar[r]
&\circ~~~\circ}&
\newcommand{\scirc}{{\scriptstyle \circ}}
\newcommand{\sbullet}{{\scriptstyle\bullet}}
$\begin{array}{*{10}{c@{\hspace{1pt}}}}
&&\scirc&&\scirc&&\scirc\\[-8pt]
&\sbullet&&\scirc&&\scirc\\[-8pt]
\scirc&&\sbullet&&\scirc&&\scirc\\[-8pt]
&\scirc&&\sbullet&&\scirc&&\sbullet
\end{array}\hspace{3mm}\begin{array}{*{10}{c@{\hspace{1pt}}}}
&&\sbullet&&\scirc&&\sbullet\\[-8pt]
&\sbullet&&\scirc&&\scirc\\[-8pt]
\sbullet&&\scirc&&\scirc&&\scirc\\[-8pt]
&\scirc&&\scirc&&\scirc&&\scirc
\end{array}$
&$\mathbb{A}_3\amalg\mathbb{A}_1$\\
\hline

(9)&\xymatrix{\circ&\circ\ar[r]\ar[l]
&\circ~~~\circ}&
\newcommand{\scirc}{{\scriptstyle \circ}}
\newcommand{\sbullet}{{\scriptstyle\bullet}}
$\begin{array}{*{10}{c@{\hspace{1pt}}}}
&&\scirc&&\scirc&&\scirc\\[-8pt]
&\sbullet&&\scirc&&\scirc\\[-8pt]
\scirc&&\sbullet&&\scirc&&\scirc\\[-8pt]
&\sbullet&&\scirc&&\scirc&&\sbullet
\end{array}$&$\mathbb{A}_3\amalg\mathbb{A}_1$\\
\hline
(10)&\xymatrix{\circ\ar[r]&\circ
&\circ\ar[l]~~~\circ}&
\newcommand{\scirc}{{\scriptstyle \circ}}
\newcommand{\sbullet}{{\scriptstyle\bullet}}
$\begin{array}{*{10}{c@{\hspace{1pt}}}}
&&\scirc&&\scirc&&\scirc\\[-8pt]
&\sbullet&&\scirc&&\scirc\\[-8pt]
\sbullet&&\scirc&&\scirc&&\scirc\\[-8pt]
&\sbullet&&\scirc&&\scirc&&\sbullet
\end{array}$&$\mathbb{A}_3\amalg\mathbb{A}_1$\\
\hline

(11)&\xymatrix{\circ\ar[r]&\circ\ar[r]~~~
\circ\ar[r]&\circ}&
\newcommand{\scirc}{{\scriptstyle \circ}}
\newcommand{\sbullet}{{\scriptstyle\bullet}}
$\begin{array}{*{10}{c@{\hspace{1pt}}}}
&&\scirc&&\scirc&&\scirc\\[-8pt]
&\scirc&&\scirc&&\scirc\\[-8pt]
\sbullet&&\scirc&&\scirc&&\sbullet\\[-8pt]
&\sbullet&&\scirc&&\sbullet&&\scirc
\end{array}  \hspace{3mm}\begin{array}{*{10}{c@{\hspace{1pt}}}}
&&\scirc&&\scirc&&\scirc\\[-8pt]
&\scirc&&\scirc&&\scirc\\[-8pt]
\sbullet&&\scirc&&\scirc&&\sbullet\\[-8pt]
&\sbullet&&\scirc&&\scirc&&\sbullet
\end{array}$&$\mathbb{A}_2\amalg\mathbb{A}_2$\\
\hline
(12)&\xymatrix{\circ\ar[r]&\circ~~~
\circ~~~\circ}&
\newcommand{\scirc}{{\scriptstyle \circ}}
\newcommand{\sbullet}{{\scriptstyle\bullet}}
$\begin{array}{*{10}{c@{\hspace{1pt}}}}
&&\scirc&&\scirc&&\sbullet\\[-8pt]
&\sbullet&&\scirc&&\scirc\\[-8pt]
\sbullet&&\scirc&&\scirc&&\scirc\\[-8pt]
&\scirc&&\scirc&&\scirc&&\sbullet
\end{array}\hspace{3mm}\begin{array}{*{10}{c@{\hspace{1pt}}}}
&&\scirc&&\scirc&&\sbullet\\[-8pt]
&\scirc&&\scirc&&\scirc\\[-8pt]
\sbullet&&\scirc&&\scirc&&\sbullet\\[-8pt]
&\scirc&&\scirc&&\sbullet&&\scirc
\end{array}\hspace{3mm}\begin{array}{*{10}{c@{\hspace{1pt}}}}
&&\scirc&&\scirc&&\sbullet\\[-8pt]
&\scirc&&\scirc&&\scirc\\[-8pt]
\sbullet&&\scirc&&\scirc&&\sbullet\\[-8pt]
&\scirc&&\scirc&&\scirc&&\sbullet
\end{array}$&$\mathbb{A}_2\amalg\mathbb{A}_1\amalg\mathbb{A}_1$\\
\hline

(13)&\xymatrix{\circ\ar[r]\ar@/^/@{.}[rr]_{}&
\circ\ar[r]&\circ~~~\circ}&
\newcommand{\scirc}{{\scriptstyle \circ}}
\newcommand{\sbullet}{{\scriptstyle\bullet}}
$\begin{array}{*{10}{c@{\hspace{1pt}}}}
&&\sbullet&&\scirc&&\sbullet\\[-8pt]
&\scirc&&\scirc&&\scirc\\[-8pt]
\sbullet&&\scirc&&\scirc&&\scirc\\[-8pt]
&\scirc&&\scirc&&\sbullet&&\scirc
\end{array} \hspace{3mm}\begin{array}{*{10}{c@{\hspace{1pt}}}}
&&\scirc&&\scirc&&\sbullet\\[-8pt]
&\sbullet&&\scirc&&\scirc\\[-8pt]
\scirc&&\scirc&&\scirc&&\scirc\\[-8pt]
&\scirc&&\sbullet&&\scirc&&\sbullet
\end{array}$&$\mathbb{A}_3\amalg\mathbb{A}_1$\\
\hline
(14)&\xymatrix{\circ\ar[r]\ar@/^/@{.}[rr]_{}&
\circ\ar[r]&\circ\ar[r]&\circ}&
\newcommand{\scirc}{{\scriptstyle \circ}}
\newcommand{\sbullet}{{\scriptstyle\bullet}}
$\begin{array}{*{10}{c@{\hspace{1pt}}}}
&&\scirc&&\sbullet&&\scirc\\[-8pt]
&\scirc&&\scirc&&\scirc\\[-8pt]
\scirc&&\sbullet&&\scirc&&\scirc\\[-8pt]
&\sbullet&&\scirc&&\scirc&&\sbullet
\end{array}$&$\mathbb{A}_4$\\
\hline
(15)&\xymatrix{\circ\ar[r]&
\circ\ar[r]\ar@/^/@{.}[rr]_{}&\circ\ar[r]&\circ}&
\newcommand{\scirc}{{\scriptstyle \circ}}
\newcommand{\sbullet}{{\scriptstyle\bullet}}
$\begin{array}{*{10}{c@{\hspace{1pt}}}}
&&\scirc&&\sbullet&&\scirc\\[-8pt]
&\scirc&&\scirc&&\scirc\\[-8pt]
\scirc&&\scirc&&\scirc&&\sbullet\\[-8pt]
&\sbullet&&\scirc&&\scirc&&\sbullet
\end{array}$&$\mathbb{A}_4$\\
\hline
(5)&\xymatrix{\circ\ar[r]&\circ&\circ\ar[l]
&\circ\ar[l]\ar@/_/@{.}[ll]_{}}&
\newcommand{\scirc}{{\scriptstyle \circ}}
\newcommand{\sbullet}{{\scriptstyle\bullet}}
$\begin{array}{*{10}{c@{\hspace{1pt}}}}
&&\sbullet&&\scirc&&\sbullet\\[-8pt]
&\sbullet&&\scirc&&\scirc\\[-8pt]
\scirc&&\scirc&&\scirc&&\scirc\\[-8pt]
&\scirc&&\sbullet&&\scirc&&\scirc
\end{array}$&$\mathbb{A}_4$\\
\hline
(16)&\xymatrix{\circ&\circ\ar[l]&\circ
\ar[l]\ar[r]\ar@/_/@{.}[ll]_{}&\circ}&
\newcommand{\scirc}{{\scriptstyle \circ}}
\newcommand{\sbullet}{{\scriptstyle\bullet}}
$\begin{array}{*{10}{c@{\hspace{1pt}}}}
&&\scirc&&\sbullet&&\scirc\\[-8pt]
&\scirc&&\scirc&&\scirc\\[-8pt]
\scirc&&\scirc&&\scirc&&\sbullet\\[-8pt]
&\sbullet&&\scirc&&\sbullet&&\scirc
\end{array}$&$\mathbb{A}_4$\\
\hline
(6)&\makecell[c]{\xymatrix{\circ\ar[r]&\circ\ar[r]&\circ\\
&\circ\ar[u]\ar@/_/@{.}[ru]_{}
}}&
\newcommand{\scirc}{{\scriptstyle \circ}}
\newcommand{\sbullet}{{\scriptstyle\bullet}}
$\begin{array}{*{10}{c@{\hspace{1pt}}}}
&&\sbullet&&\scirc&&\sbullet\\[-8pt]
&\scirc&&\scirc&&\scirc\\[-8pt]
\scirc&&\scirc&&\sbullet&&\scirc\\[-8pt]
&\scirc&&\scirc&&\sbullet&&\scirc
\end{array}$
&$\mathbb{A}_4$\\
\hline
(7)&\makecell[c]{\xymatrix{\circ\ar[r]\ar@/_/@{.}[rd]_{}
&\circ\ar[d]\ar[r]&
\circ\\
&\circ
}}&
\newcommand{\scirc}{{\scriptstyle \circ}}
\newcommand{\sbullet}{{\scriptstyle\bullet}}
$\begin{array}{*{10}{c@{\hspace{1pt}}}}
&&\sbullet&&\scirc&&\sbullet\\[-8pt]
&\scirc&&\scirc&&\scirc\\[-8pt]
\scirc&&\scirc&&\sbullet&&\scirc\\[-8pt]
&\scirc&&\sbullet&&\scirc&&\scirc
\end{array}$&$\mathbb{A}_4$\\
\hline
\end{tabular}
\]

%

To summarise, there are 16 silted algebras  of type $Q$, forming 4 families:
\begin{itemize}
\item[(\romannumeral1)]tilted algebras of type $\mathbb{A}_4$: $(1)-(7)$,$(14)-(16)$;
\item[(\romannumeral2)]tilted algebras of type $\mathbb{A}_3\amalg\mathbb{A}_1$: $(8)-(10)$,$(13)$;
\item[(\romannumeral3)]tilted algebras of type $\mathbb{A}_2\amalg\mathbb{A}_2$: $(11)$;
\item[(\romannumeral4)]tilted algebras of type $\mathbb{A}_2\amalg\mathbb{A}_1\amalg\mathbb{A}_1$: $(12)$.

\end{itemize}
\end{example}

\vspace{13pt}
\subsection*{3.3 Examples of type  $\mathbb{D}$}
\subsection*{3.3.1 Type  $\mathbb{D}_4$}

\paragraph{\indent According to \cite[Theorem 1]{ONFR}, there are 50 basic 2-term silting complexes, 20 of which are tilting modules. Up to isomorphism there are four quivers of type $\mathbb{D}_4$. Due to Lemma \ref{s}, we classify silted algebras for two of them.}

\begin{example}\label{D4}
Let $A$ be the path algebra of the quiver
$$\xymatrix@C=15pt{\stackrel{1}\circ\ar[rrd]&&\\
&&\stackrel{3}\circ\ar[rrr]&&&
\stackrel{4}\circ\\
\stackrel{2}\circ\ar[rru]
}$$
 The AR-quiver of $\text{K}^{[-1.0]}(\text{proj}A)$  is
$$\xymatrix@C=5pt{&&{\begin{smallmatrix} 1&&\\&1&1\\0&& \end{smallmatrix}}\ar@{..}[rr]\ar[rdd]&&
{\begin{smallmatrix} 0&&\\&1&0\\1&& \end{smallmatrix}}\ar@{..}[rr]\ar[rdd]&&
{\begin{smallmatrix} 1&&\\&0&0\\0&& \end{smallmatrix}}\ar@{..}[rr]\ar[rdd]&&
{\begin{smallmatrix} 1&&\\&1&1[1]\\0&& \end{smallmatrix}}\\
&&{\begin{smallmatrix} 0&&\\&1&1\\1&& \end{smallmatrix}}\ar@{..}[rr]\ar[rd]&&
{\begin{smallmatrix} 1&&\\&1&0\\0&& \end{smallmatrix}}\ar@{..}[rr]\ar[rd]&&
{\begin{smallmatrix} 0&&\\&0&0\\1&& \end{smallmatrix}}\ar@{..}[rr]\ar[rd]&&
{\begin{smallmatrix} 0&&\\&1&1[1]\\1&& \end{smallmatrix}}\\
&{\begin{smallmatrix} 0&&\\&1&1\\0&& \end{smallmatrix}}\ar@{..}[rr]\ar[ru]\ar[ruu]\ar[rd]&&
{\begin{smallmatrix} 1&&\\&2&1\\1&& \end{smallmatrix}}\ar@{..}[rr]\ar[ru]\ar[ruu]\ar[rd]&&
{\begin{smallmatrix} 1&&\\&1&0\\1&& \end{smallmatrix}}\ar@{..}[rr]\ar[ru]\ar[ruu]\ar[rd]&&
{\begin{smallmatrix} 0&&\\&1&1[1]\\0&& \end{smallmatrix}}\ar[ru]\ar[ruu]\\
{\begin{smallmatrix} 0&&\\&0&1\\0&& \end{smallmatrix}}\ar@{..}[rr]\ar[ru]&&
{\begin{smallmatrix} 0&&\\&1&0\\0&& \end{smallmatrix}}\ar@{..}[rr]\ar[ru]&&
{\begin{smallmatrix} 1&&\\&1&1\\1&& \end{smallmatrix}}\ar@{..}[rr]\ar[ru]&&
{\begin{smallmatrix} 0&&\\&0&1[1]\\0&& \end{smallmatrix}}\ar[ru]
}$$


(I)  Tilted algebras of type $Q$ are

\newcommand{\scirc}{{\scriptstyle \circ}}
\newcommand{\sbullet}{{\scriptstyle\bullet}}

\[\begin{tabular}{|c|c|c|}
 \hline
 No. &
tilted algebras & tilting modules\\
\hline
(1)&
\makecell[c]{
\xymatrix{\circ\ar[r]&\circ\ar[r]&\circ\\&\circ\ar[u]
}} & \makecell[c]{$\begin{array}{*{8}{c@{\hspace{3pt}}}}
&&\sbullet&&\scirc&&\scirc\\[-8pt]
&&\scirc&&\sbullet&&\scirc\\[-8pt]
&\scirc&&\sbullet&&\scirc\\[-8pt]
\scirc&&\scirc&&\sbullet
\end{array}
\hspace{2mm}
\begin{array}{*{8}{c@{\hspace{3pt}}}}
&&\scirc&&\sbullet&&\scirc\\[-8pt]
&&\sbullet&&\scirc&&\scirc\\[-8pt]
&\scirc&&\sbullet&&\scirc\\[-8pt]
\scirc&&\scirc&&\sbullet
\end{array}
\hspace{2mm}
\begin{array}{*{8}{c@{\hspace{3pt}}}}
&&\sbullet&&\scirc&&\scirc\\[-8pt]
&&\sbullet&&\scirc&&\scirc\\[-8pt]
&\sbullet&&\scirc&&\scirc\\[-8pt]
\sbullet&&\scirc&&\scirc
\end{array}
\hspace{2mm}
\begin{array}{*{8}{c@{\hspace{3pt}}}}
&&\scirc&&\sbullet&&\scirc\\[-8pt]
&&\scirc&&\sbullet&&\scirc\\[-8pt]
&\scirc&&\sbullet&&\scirc\\[-8pt]
\scirc&&\sbullet&&\scirc
\end{array}
\hspace{2mm}
\begin{array}{*{8}{c@{\hspace{3pt}}}}
&&\scirc&&\scirc&&\sbullet\\[-8pt]
&&\scirc&&\scirc&&\sbullet\\[-8pt]
&\scirc&&\scirc&&\sbullet\\[-8pt]
\scirc&&\scirc&&\sbullet
\end{array}$}\\
\hline
(2)&
\makecell[c]{\xymatrix{\circ\ar[r]&\circ\ar[r]\ar[d]&\circ\\&\circ
}} & \makecell[c]{$\begin{array}{*{8}{c@{\hspace{3pt}}}}
&&\sbullet&&\scirc&&\scirc\\[-8pt]
&&\scirc&&\sbullet&&\scirc\\[-8pt]
&\scirc&&\sbullet&&\scirc\\[-8pt]
\scirc&&\sbullet&&\scirc
\end{array}
\hspace{2mm}
\begin{array}{*{8}{c@{\hspace{3pt}}}}
&&\scirc&&\sbullet&&\scirc\\[-8pt]
&&\sbullet&&\scirc&&\scirc\\[-8pt]
&\scirc&&\sbullet&&\scirc\\[-8pt]
\scirc&&\sbullet&&\scirc
\end{array}
\hspace{2mm}
\begin{array}{*{8}{c@{\hspace{3pt}}}}
&&\sbullet&&\scirc&&\scirc\\[-8pt]
&&\sbullet&&\scirc&&\scirc\\[-8pt]
&\scirc&&\sbullet&&\scirc\\[-8pt]
\scirc&&\scirc&&\sbullet
\end{array}
\hspace{2mm}
\begin{array}{*{8}{c@{\hspace{3pt}}}}
&&\scirc&&\sbullet&&\scirc\\[-8pt]
&&\scirc&&\scirc&&\sbullet\\[-8pt]
&\scirc&&\scirc&&\sbullet\\[-8pt]
\scirc&&\scirc&&\sbullet
\end{array}
\hspace{2mm}
\begin{array}{*{8}{c@{\hspace{3pt}}}}
&&\scirc&&\scirc&&\sbullet\\[-8pt]
&&\scirc&&\sbullet&&\scirc\\[-8pt]
&\scirc&&\scirc&&\sbullet\\[-8pt]
\scirc&&\scirc&&\sbullet
\end{array}$}\\
\hline

\end{tabular}
\]

\[\begin{tabular}{|c|c|c|}
 \hline
 No. &
tilted algebras & tilting modules\\
\hline
(3)&
\makecell[c]{\xymatrix{\circ&\circ\ar[l]\ar[d]\ar[r]&\circ\\&\circ
}} &
\makecell[c]{$\begin{array}{*{8}{c@{\hspace{3pt}}}}
&&\sbullet&&\scirc&&\scirc\\[-8pt]
&&\sbullet&&\scirc&&\scirc\\[-8pt]
&\scirc&&\sbullet&&\scirc\\[-8pt]
\scirc&&\sbullet&&\scirc
\end{array}
\hspace{3mm}
\begin{array}{*{8}{c@{\hspace{3pt}}}}
&&\scirc&&\sbullet&&\scirc\\[-8pt]
&&\scirc&&\sbullet&&\scirc\\[-8pt]
&\scirc&&\scirc&&\sbullet\\[-8pt]
\scirc&&\scirc&&\sbullet
\end{array}$}\\
\hline
(4)& \makecell[c]{\xymatrix{\circ\ar[r]&\circ&\circ\ar[l]\\&\circ\ar[u]
}} &
\makecell[c]{$\begin{array}{*{8}{c@{\hspace{3pt}}}}
&&\sbullet&&\scirc&&\scirc\\[-8pt]
&&\sbullet&&\scirc&&\scirc\\[-8pt]
&\sbullet&&\scirc&&\scirc\\[-8pt]
\scirc&&\sbullet&&\scirc
\end{array}
\hspace{3mm}
\begin{array}{*{8}{c@{\hspace{3pt}}}}
&&\scirc&&\sbullet&&\scirc\\[-8pt]
&&\scirc&&\sbullet&&\scirc\\[-8pt]
&\scirc&&\sbullet&&\scirc\\[-8pt]
\scirc&&\scirc&&\sbullet
\end{array}$}\\
\hline
(5)&
\makecell[c]{\xymatrix{\circ\ar[r]&\circ\\\circ\ar[u]
\ar[r]\ar@//@{.}[ru]_{}&\circ\ar[u]}}&
\makecell[c]{$\begin{array}{*{8}{c@{\hspace{3pt}}}}
&&\sbullet&&\scirc&&\sbullet\\[-8pt]
&&\scirc&&\sbullet&&\scirc\\[-8pt]
&\scirc&&\scirc&&\scirc\\[-8pt]
\scirc&&\scirc&&\sbullet
\end{array}
\hspace{3mm}
\begin{array}{*{8}{c@{\hspace{3pt}}}}
&&\scirc&&\sbullet&&\scirc\\[-8pt]
&&\sbullet&&\scirc&&\sbullet\\[-8pt]
&\scirc&&\scirc&&\scirc\\[-8pt]
\scirc&&\scirc&&\sbullet
\end{array}
\hspace{3mm}
\begin{array}{*{8}{c@{\hspace{3pt}}}}
&&\sbullet&&\scirc&&\scirc\\[-8pt]
&&\sbullet&&\scirc&&\scirc\\[-8pt]
&\scirc&&\scirc&&\scirc\\[-8pt]
\sbullet&&\scirc&&\sbullet
\end{array}$}\\
\hline
(6)&
\makecell[c]{\xymatrix{\circ\ar[r]\ar@/^/@{.}[rr]_{}&\circ\ar[r]&\circ\\
&\circ\ar[u]\ar@/_/@{.}[ru]_{}
}}& \makecell[c]{$\begin{array}{*{8}{c@{\hspace{3pt}}}}
&&\scirc&&\scirc&&\sbullet\\[-8pt]
&&\scirc&&\scirc&&\sbullet\\[-8pt]
&\scirc&&\scirc&&\scirc\\[-8pt]
\sbullet&&\scirc&&\sbullet
\end{array}$}\\
\hline
(7)&
\xymatrix{\circ\ar[r]\ar@/^/@{.}[rrr]_{}
&\circ\ar[r]&\circ\ar[r]&\circ
} & $\begin{array}{*{8}{c@{\hspace{3pt}}}}
&&\sbullet&&\scirc&&\sbullet\\[-8pt]
&&\scirc&&\scirc&&\scirc\\[-8pt]
&\scirc&&\scirc&&\scirc\\[-8pt]
\sbullet&&\scirc&&\sbullet
\end{array}
\hspace{3mm}
\begin{array}{*{8}{c@{\hspace{3pt}}}}
&&\scirc&&\scirc&&\scirc\\[-8pt]
&&\sbullet&&\scirc&&\sbullet\\[-8pt]
&\scirc&&\scirc&&\scirc\\[-8pt]
\sbullet&&\scirc&&\sbullet
\end{array}$\\
\hline
\end{tabular}
\]


\smallskip
(II) Silted algebras of the form $\text{End}_{\text{K}^b(\text{proj}A)}(M\oplus P[1])$ ($P\neq 0,~M\neq 0$) are

\[\begin{tabular}{|c|c|c|c|}
 \hline
No. &
silted algebras & 2-term silting complexes & tilted type\\
\hline
(8)&
\xymatrix{\circ\ar[r]&\circ&\circ\ar[l]~~~\circ} & $\begin{array}{*{10}{c@{\hspace{1pt}}}}
&&\scirc&&\scirc&&\scirc&&\sbullet\\[-10pt]
&&\sbullet&&\scirc&&\scirc&&\scirc\\[-10pt]
&\sbullet&&\scirc&&\scirc&&\scirc\\[-10pt]
\scirc&&\sbullet&&\scirc&&\scirc
\end{array}
\hspace{2mm}
\begin{array}{*{10}{c@{\hspace{1pt}}}}
&&\sbullet&&\scirc&&\scirc&&\scirc\\[-10pt]
&&\scirc&&\scirc&&\scirc&&\sbullet\\[-10pt]
&\sbullet&&\scirc&&\scirc&&\scirc\\[-10pt]
\scirc&&\sbullet&&\scirc&&\scirc
\end{array}
\hspace{2mm}
\begin{array}{*{10}{c@{\hspace{1pt}}}}
&&\scirc&&\scirc&&\scirc&&\sbullet\\[-10pt]
&&\scirc&&\scirc&&\scirc&&\sbullet\\[-10pt]
&\scirc&&\scirc&&\scirc&&\sbullet\\[-10pt]
\sbullet&&\scirc&&\scirc&&\scirc
\end{array}
$ & $\mathbb{A}_3\amalg\mathbb{A}_1$ \\
\hline
(9)&
\xymatrix{\circ\ar[r]&\circ\ar[r]&\circ~~~\circ} & $
\begin{array}{*{10}{c@{\hspace{1pt}}}}
&&\scirc&&\scirc&&\scirc&&\sbullet\\[-10pt]
&&\sbullet&&\scirc&&\scirc&&\scirc\\[-10pt]
&\sbullet&&\scirc&&\scirc&&\scirc\\[-10pt]
\sbullet&&\scirc&&\scirc&&\scirc
\end{array}
\hspace{2mm}
\begin{array}{*{10}{c@{\hspace{1pt}}}}
&&\sbullet&&\scirc&&\scirc&&\scirc\\[-10pt]
&&\scirc&&\scirc&&\scirc&&\sbullet\\[-10pt]
&\sbullet&&\scirc&&\scirc&&\scirc\\[-10pt]
\sbullet&&\scirc&&\scirc&&\scirc
\end{array}
\hspace{2mm}
\begin{array}{*{10}{c@{\hspace{1pt}}}}
&&\scirc&&\scirc&&\scirc&&\sbullet\\[-10pt]
&&\scirc&&\scirc&&\sbullet&&\scirc\\[-10pt]
&\scirc&&\scirc&&\scirc&&\sbullet\\[-10pt]
\sbullet&&\scirc&&\scirc&&\scirc
\end{array}
\hspace{2mm}
\begin{array}{*{10}{c@{\hspace{1pt}}}}
&&\scirc&&\scirc&&\sbullet&&\scirc\\[-10pt]
&&\scirc&&\scirc&&\scirc&&\sbullet\\[-10pt]
&\scirc&&\scirc&&\scirc&&\sbullet\\[-10pt]
\sbullet&&\scirc&&\scirc&&\scirc
\end{array} $ & $\mathbb{A}_3\amalg\mathbb{A}_1$ \\
\hline
(10)&
\xymatrix{\circ&\circ\ar[l]\ar[r]&\circ~~~\circ} & $
\begin{array}{*{10}{c@{\hspace{1pt}}}}
&&\scirc&&\scirc&&\sbullet&&\scirc\\[-10pt]
&&\scirc&&\scirc&&\sbullet&&\scirc\\[-10pt]
&\scirc&&\scirc&&\scirc&&\sbullet\\[-10pt]
\sbullet&&\scirc&&\scirc&&\scirc
\end{array}$ & $\mathbb{A}_3\amalg\mathbb{A}_1$ \\
\hline

(11)&
\xymatrix{\circ\ar[r]&\circ~~~\circ~~~\circ}&
$\begin{array}{*{10}{c@{\hspace{1pt}}}}
&&\scirc&&\scirc&&\scirc&&\sbullet\\[-10pt]
&&\scirc&&\scirc&&\scirc&&\sbullet\\[-10pt]
&\sbullet&&\scirc&&\scirc&&\scirc\\[-10pt]
\scirc&&\sbullet&&\scirc&&\scirc
\end{array}
\hspace{3mm}
\begin{array}{*{10}{c@{\hspace{1pt}}}}
&&\scirc&&\scirc&&\scirc&&\sbullet\\[-10pt]
&&\scirc&&\scirc&&\scirc&&\sbullet\\[-10pt]
&\sbullet&&\scirc&&\scirc&&\scirc\\[-10pt]
\sbullet&&\scirc&&\scirc&&\scirc
\end{array}$ &  $\mathbb{A}_2\amalg\mathbb{A}_1\amalg\mathbb{A}_1$ \\
\hline

(7)&
\xymatrix{\circ\ar[r]\ar@/^/@{.}[rrr]_{}
&\circ\ar[r]&\circ\ar[r]&\circ} & $\begin{array}{*{10}{c@{\hspace{1pt}}}}
&&\scirc&&\sbullet&&\scirc&&\sbullet\\[-10pt]
&&\sbullet&&\scirc&&\sbullet&&\scirc\\[-10pt]
&\scirc&&\scirc&&\scirc&&\scirc\\[-10pt]
\scirc&&\scirc&&\scirc&&\scirc
\end{array}
\hspace{3mm}
\begin{array}{*{10}{c@{\hspace{1pt}}}}
&&\sbullet&&\scirc&&\sbullet&&\scirc\\[-10pt]
&&\scirc&&\sbullet&&\scirc&&\sbullet\\[-10pt]
&\scirc&&\scirc&&\scirc&&\scirc\\[-10pt]
\scirc&&\scirc&&\scirc&&\scirc
\end{array}
 $ & $\mathbb{D}_4$ \\
\hline
(12)&
\makecell[c]{\xymatrix{\circ\ar[r]\ar@/^/@{.}[rr]_{}
\ar@/_/@{.}[rd]_{}&\circ\ar[r]\ar[d]&\circ\\
&\circ}}&
\makecell[c]{$\begin{array}{*{10}{c@{\hspace{1pt}}}}
&&\scirc&&\sbullet&&\scirc&&\sbullet\\[-10pt]
&&\sbullet&&\scirc&&\scirc&&\scirc\\[-10pt]
&\scirc&&\scirc&&\scirc&&\scirc\\[-10pt]
\scirc&&\sbullet&&\scirc&&\scirc
\end{array}
\hspace{3mm}
\begin{array}{*{10}{c@{\hspace{1pt}}}}
&&\sbullet&&\scirc&&\scirc&&\scirc\\[-10pt]
&&\scirc&&\sbullet&&\scirc&&\sbullet\\[-10pt]
&\scirc&&\scirc&&\scirc&&\scirc\\[-10pt]
\scirc&&\sbullet&&\scirc&&\scirc
\end{array}$} &  $\mathbb{D}_4$ \\
\hline
(13)&
\xymatrix{\circ\ar[r]\ar@/^/@{.}[rr]_{}
&\circ\ar[r]\ar@/^/@{.}[rr]_{}&\circ\ar[r]&\circ
} & $
\begin{array}{*{10}{c@{\hspace{1pt}}}}
&&\scirc&&\scirc&&\scirc&&\sbullet\\[-10pt]
&&\sbullet&&\scirc&&\sbullet&&\scirc\\[-10pt]
&\scirc&&\scirc&&\scirc&&\scirc\\[-10pt]
\sbullet&&\scirc&&\scirc&&\scirc
\end{array}
\hspace{3mm}
\begin{array}{*{10}{c@{\hspace{1pt}}}}
&&\sbullet&&\scirc&&\sbullet&&\scirc\\[-10pt]
&&\scirc&&\scirc&&\scirc&&\sbullet\\[-10pt]
&\scirc&&\scirc&&\scirc&&\scirc\\[-10pt]
\sbullet&&\scirc&&\scirc&&\scirc
\end{array}$ & strictly shod  \\
\hline
\end{tabular}
\]


To summarise, there are 13 silted algebras  of type $Q$, forming 4 families:
\begin{itemize}
\item[(\romannumeral1)] tilted algebras of type $\mathbb{D}_4$: $(1)-(7)$, $(12)$;
\item[(\romannumeral2)] tilted algebras of type $\mathbb{A}_3\coprod\mathbb{A}_1$: $(8)-(10)$;
\item[(\romannumeral3)] tilted algebras of type $\mathbb{A}_2\coprod\mathbb{A}_1\coprod\mathbb{A}_1$: $(11)$;
\item[(\romannumeral4)] the strictly shod algebra
$\xymatrix@C=5pt{\circ\ar[rr]\ar@/^/@{.}[rrrr]_{}
&&\circ\ar[rr]\ar@/^/@{.}[rrrr]_{}&&\circ\ar[rr]&&\circ
}$: $(13)$.

\end{itemize}

\end{example}

\begin{example}
Let $A$ be the path algebra of the quiver
$$\xymatrix@C=8pt{\stackrel{1}\circ\ar[rrd]&&\\
&&\stackrel{3}\circ&&&
\stackrel{4}\circ\ar[lll]\\
\stackrel{2}\circ\ar[rru]
}$$
The AR-quiver of  $\text{K}^{[-1.0]}(\text{proj}A)$  is

$$\xymatrix@C=5pt{&{\begin{smallmatrix} 1&&\\&1&0\\0&& \end{smallmatrix}}\ar@{..}[rr]\ar[rdd]&&
{\begin{smallmatrix} 0&&\\&1&1\\1&& \end{smallmatrix}}\ar@{..}[rr]\ar[rdd]&&
{\begin{smallmatrix} 1&&\\&0&0\\0&& \end{smallmatrix}}\ar@{..}[rr]\ar[rdd]&&
{\begin{smallmatrix} 1&&\\&1&0[1]\\0&& \end{smallmatrix}}\\
&{\begin{smallmatrix} 0&&\\&1&0\\1&& \end{smallmatrix}}\ar@{..}[rr]\ar[rd]&&
{\begin{smallmatrix} 1&&\\&1&1\\0&& \end{smallmatrix}}\ar@{..}[rr]\ar[rd]&&
{\begin{smallmatrix} 0&&\\&0&0\\1&& \end{smallmatrix}}\ar@{..}[rr]\ar[rd]&&
{\begin{smallmatrix} 0&&\\&1&0[1]\\1&& \end{smallmatrix}}\\
{\begin{smallmatrix} 0&&\\&1&0\\0&& \end{smallmatrix}}\ar@{..}[rr]\ar[ru]\ar[ruu]\ar[rd]&&
{\begin{smallmatrix} 1&&\\&2&1\\1&& \end{smallmatrix}}\ar@{..}[rr]\ar[ru]\ar[ruu]\ar[rd]&&
{\begin{smallmatrix} 1&&\\&1&1\\1&& \end{smallmatrix}}\ar@{..}[rr]\ar[ru]\ar[ruu]\ar[rd]&&
{\begin{smallmatrix} 0&&\\&1&0[1]\\0&& \end{smallmatrix}}\ar[ru]\ar[ruu]\ar[rd]\\
&{\begin{smallmatrix} 0&&\\&1&1\\0&& \end{smallmatrix}}\ar@{..}[rr]\ar[ru]&&
{\begin{smallmatrix} 1&&\\&1&0\\1&& \end{smallmatrix}}\ar@{..}[rr]\ar[ru]&&
{\begin{smallmatrix} 0&&\\&0&1\\0&& \end{smallmatrix}}\ar@{..}[rr]\ar[ru]&&
{\begin{smallmatrix} 0&&\\&1&1[1]\\0&& \end{smallmatrix}}
}$$

(I)  Tilted algebras of type $Q$ are
\[\begin{tabular}{|c|c|c|}
 \hline
No. & tilted algebras & tilting modules\\
\hline
(1)&\makecell[c]{\xymatrix{\circ\ar[r]&\circ\ar[r]&\circ\\
&\circ\ar[u]}}&
\newcommand{\scirc}{{\scriptstyle \circ}}
\newcommand{\sbullet}{{\scriptstyle\bullet}}
$\begin{array}{*{10}{c@{\hspace{1pt}}}}
&\sbullet&&\scirc&&\scirc\\[-8pt]
&\scirc&&\sbullet&&\scirc\\[-8pt]
\scirc&&\sbullet&&\scirc\\[-8pt]
&\scirc&&\sbullet&&\scirc\\[-5pt]
\end{array}  \hspace{3mm}   \begin{array}{*{10}{c@{\hspace{1pt}}}}
&\scirc&&\sbullet&&\scirc\\[-8pt]
&\sbullet&&\scirc&&\scirc\\[-8pt]
\scirc&&\sbullet&&\scirc\\[-8pt]
&\scirc&&\sbullet&&\scirc\\[-8pt]
\end{array} \hspace{3mm}  \begin{array}{*{10}{c@{\hspace{1pt}}}}
&\scirc&&\sbullet&&\scirc\\[-8pt]
&\scirc&&\sbullet&&\scirc\\[-8pt]
\scirc&&\sbullet&&\scirc\\[-8pt]
&\sbullet&&\scirc&&\scirc\\[-8pt]
\end{array} \hspace{3mm}\begin{array}{*{10}{c@{\hspace{2pt}}}}
&\scirc&&\sbullet&&\scirc\\[-8pt]
&\scirc&&\scirc&&\sbullet\\[-8pt]
\scirc&&\scirc&&\sbullet\\[-8pt]
&\scirc&&\scirc&&\sbullet\\[-5pt]
\end{array}  \hspace{3mm}   \begin{array}{*{10}{c@{\hspace{1pt}}}}
&\scirc&&\scirc&&\sbullet\\[-8pt]
&\scirc&&\sbullet&&\scirc\\[-8pt]
\scirc&&\scirc&&\sbullet\\[-8pt]
&\scirc&&\scirc&&\sbullet\\[-8pt]
\end{array} \hspace{3mm}  \begin{array}{*{10}{c@{\hspace{1pt}}}}
&\scirc&&\scirc&&\sbullet\\[-8pt]
&\scirc&&\scirc&&\sbullet\\[-8pt]
\scirc&&\scirc&&\sbullet\\[-8pt]
&\scirc&&\sbullet&&\scirc\\[-8pt]
\end{array}$\\
\hline
(2)&\makecell[c]{\xymatrix{\circ\ar[r]&\circ\ar[r]\ar[d]&\circ\\
&\circ}}&
\newcommand{\scirc}{{\scriptstyle \circ}}
\newcommand{\sbullet}{{\scriptstyle\bullet}}
$\begin{array}{*{10}{c@{\hspace{1pt}}}}
&\sbullet&&\scirc&&\scirc\\[-8pt]
&\sbullet&&\scirc&&\scirc\\[-8pt]
\scirc&&\sbullet&&\scirc\\[-8pt]
&\scirc&&\sbullet&&\scirc\\[-8pt]
\end{array} \hspace{3mm} \begin{array}{*{10}{c@{\hspace{1pt}}}}
&\sbullet&&\scirc&&\scirc\\[-8pt]
&\scirc&&\sbullet&&\scirc\\[-8pt]
\scirc&&\sbullet&&\scirc\\[-8pt]
&\sbullet&&\scirc&&\scirc\\[-8pt]
\end{array}  \hspace{3mm} \begin{array}{*{10}{c@{\hspace{1pt}}}}
&\scirc&&\sbullet&&\scirc\\[-8pt]
&\sbullet&&\scirc&&\scirc\\[-8pt]
\scirc&&\sbullet&&\scirc\\[-8pt]
&\sbullet&&\scirc&&\scirc\\[-8pt]
\end{array}\hspace{3mm}\begin{array}{*{10}{c@{\hspace{1pt}}}}
&\scirc&&\sbullet&&\scirc\\[-8pt]
&\scirc&&\sbullet&&\scirc\\[-8pt]
\scirc&&\scirc&&\sbullet\\[-8pt]
&\scirc&&\scirc&&\sbullet\\[-8pt]
\end{array} \hspace{3mm} \begin{array}{*{10}{c@{\hspace{1pt}}}}
&\scirc&&\sbullet&&\scirc\\[-8pt]
&\scirc&&\scirc&&\sbullet\\[-8pt]
\scirc&&\scirc&&\sbullet\\[-8pt]
&\scirc&&\sbullet&&\scirc\\[-8pt]
\end{array}  \hspace{3mm} \begin{array}{*{10}{c@{\hspace{1pt}}}}
&\scirc&&\scirc&&\sbullet\\[-8pt]
&\scirc&&\sbullet&&\scirc\\[-8pt]
\scirc&&\scirc&&\sbullet\\[-8pt]
&\scirc&&\sbullet&&\scirc\\[-8pt]
\end{array}$\\
\hline
(3)&\makecell[c]{\xymatrix{\circ&\circ\ar[l]\ar[d]\ar[r]
&\circ\\&\circ}}&
\newcommand{\scirc}{{\scriptstyle \circ}}
\newcommand{\sbullet}{{\scriptstyle\bullet}}
$\begin{array}{*{10}{c@{\hspace{1pt}}}}
&\sbullet&&\scirc&&\scirc\\[-8pt]
&\sbullet&&\scirc&&\scirc\\[-8pt]
\scirc&&\sbullet&&\scirc\\[-8pt]
&\sbullet&&\scirc&&\scirc\\[-5pt]
\end{array}\hspace{5mm}\begin{array}{*{10}{c@{\hspace{1pt}}}}
&\scirc&&\sbullet&&\scirc\\[-8pt]
&\scirc&&\sbullet&&\scirc\\[-8pt]
\scirc&&\scirc&&\sbullet\\[-8pt]
&\scirc&&\sbullet&&\scirc\\[-8pt]
\end{array}$\\
\hline

\end{tabular}
\]

\[\begin{tabular}{|c|c|c|}
 \hline
No. & tilted algebras & tilting modules\\
\hline

(4)&\makecell[c]{\xymatrix{\circ\ar[r]&\circ&\circ\ar[l]\\
&\circ\ar[u]}}&
\newcommand{\scirc}{{\scriptstyle \circ}}
\newcommand{\sbullet}{{\scriptstyle\bullet}}
$\begin{array}{*{10}{c@{\hspace{1pt}}}}
&\sbullet&&\scirc&&\scirc\\[-8pt]
&\sbullet&&\scirc&&\scirc\\[-8pt]
\sbullet&&\scirc&&\scirc\\[-8pt]
&\sbullet&&\scirc&&\scirc\\[-5pt]
\end{array}\hspace{5mm}\begin{array}{*{10}{c@{\hspace{1pt}}}}
&\scirc&&\sbullet&&\scirc\\[-8pt]
&\scirc&&\sbullet&&\scirc\\[-8pt]
\scirc&&\sbullet&&\scirc\\[-8pt]
&\scirc&&\sbullet&&\scirc\\[-5pt]
\end{array}\hspace{5mm}\begin{array}{*{10}{c@{\hspace{1pt}}}}
&\scirc&&\scirc&&\sbullet\\[-8pt]
&\scirc&&\scirc&&\sbullet\\[-8pt]
\scirc&&\scirc&&\sbullet\\[-8pt]
&\scirc&&\scirc&&\sbullet\\[-8pt]
\end{array}$\\
\hline
(5)&\makecell[c]{\xymatrix{\circ\ar[r]
&\circ\\\circ\ar[u]\ar[r]\ar@//@{.}[ru]_{}&\circ\ar[u]}}&
\newcommand{\scirc}{{\scriptstyle \circ}}
\newcommand{\sbullet}{{\scriptstyle\bullet}}
$\begin{array}{*{10}{c@{\hspace{1pt}}}}
&\sbullet&&\scirc&&\sbullet\\[-8pt]
&\scirc&&\sbullet&&\scirc\\[-8pt]
\scirc&&\scirc&&\scirc\\[-8pt]
&\scirc&&\sbullet&&\scirc
\end{array} \hspace{5mm}   \begin{array}{*{10}{c@{\hspace{1pt}}}}
&\scirc&&\sbullet&&\scirc\\[-8pt]
&\sbullet&&\scirc&&\sbullet\\[-8pt]
\scirc&&\scirc&&\scirc\\[-8pt]
&\scirc&&\sbullet&&\scirc
\end{array}  \hspace{5mm}  \begin{array}{*{10}{c@{\hspace{1pt}}}}
&\scirc&&\sbullet&&\scirc\\[-8pt]
&\scirc&&\sbullet&&\scirc\\[-8pt]
\scirc&&\scirc&&\scirc\\[-8pt]
&\sbullet&&\scirc&&\sbullet\\[-8pt]
\end{array} $\\
\hline
\end{tabular}
\]

\smallskip
(II) Silted algebras of the form $\text{End}_{\text{K}^b(\text{proj}A)}(M\oplus P[1])$ ($P\neq 0,~M\neq 0$) are

\newcommand{\tabincell}[2]{\begin{tabular}{@{}#1@{}}#2\end{tabular}}
\begin{table}[htbp]
\begin{tabular}{|c|c|c|c|}
 \hline
No.&silted algebras & 2-term silting complexes &  tilted type\\
\hline

(6)&\makecell[c]{\xymatrix{\circ\ar[r]\ar@/^/@{.}[rr]_{}
\ar@/_/@{.}[rd]_{}&\circ\ar[d]\ar[r]&\circ\\
&\circ
}}&
\newcommand{\scirc}{{\scriptstyle \circ}}
\newcommand{\sbullet}{{\scriptstyle\bullet}}
$\begin{array}{*{12}{c@{\hspace{1pt}}}}
&\scirc&&\sbullet&&\scirc&&\sbullet\\[-6pt]
&\sbullet&&\scirc&&\scirc&&\scirc\\[-6pt]
\scirc&&\scirc&&\scirc&&\scirc\\[-6pt]
&\sbullet&&\scirc&&\scirc&&\scirc
\end{array}\hspace{3mm} \begin{array}{*{12}{c@{\hspace{1pt}}}}
&\sbullet&&\scirc&&\scirc&&\scirc\\[-6pt]
&\scirc&&\sbullet&&\scirc&&\sbullet\\[-6pt]
\scirc&&\scirc&&\scirc&&\scirc\\[-6pt]
&\sbullet&&\scirc&&\scirc&&\scirc
\end{array} \hspace{3mm} \begin{array}{*{12}{c@{\hspace{1pt}}}}
&\sbullet&&\scirc&&\scirc&&\scirc\\[-6pt]
&\sbullet&&\scirc&&\scirc&&\scirc\\[-6pt]
\scirc&&\scirc&&\scirc&&\scirc\\[-6pt]
&\scirc&&\sbullet&&\scirc&&\sbullet
\end{array}$& $\mathbb{D}_4$\\
\hline
(7)&\makecell[c]{\xymatrix{\circ\ar[r]\ar@/^/@{.}[rr]_{}
&\circ\ar[r]&\circ\\
&\circ\ar[u]\ar@/_/@{.}[ru]_{}
}}&\newcommand{\scirc}{{\scriptstyle \circ}}
\newcommand{\sbullet}{{\scriptstyle\bullet}}
$\begin{array}{*{12}{c@{\hspace{1pt}}}}
&\scirc&&\scirc&&\scirc&&\sbullet\\[-6pt]
&\scirc&&\scirc&&\scirc&&\sbullet\\[-6pt]
\scirc&&\scirc&&\scirc&&\scirc\\[-6pt]
&\sbullet&&\scirc&&\sbullet&&\scirc
\end{array}  \hspace{3mm} \begin{array}{*{12}{c@{\hspace{1pt}}}}
&\scirc&&\scirc&&\scirc&&\sbullet\\[-6pt]
&\sbullet&&\scirc&&\sbullet&&\scirc\\[-6pt]
\scirc&&\scirc&&\scirc&&\scirc\\[-6pt]
&\scirc&&\scirc&&\scirc&&\sbullet
\end{array} \hspace{3mm} \begin{array}{*{12}{c@{\hspace{1pt}}}}
&\sbullet&&\scirc&&\sbullet&&\scirc\\[-6pt]
&\scirc&&\scirc&&\scirc&&\sbullet\\[-6pt]
\scirc&&\scirc&&\scirc&&\scirc\\[-6pt]
&\scirc&&\scirc&&\scirc&&\sbullet
\end{array}$&$\mathbb{D}_4$\\
\hline
(8)&\xymatrix{\circ\ar[r]&\circ
&\circ\ar[l]~~~\circ}&
\newcommand{\scirc}{{\scriptstyle \circ}}
\newcommand{\sbullet}{{\scriptstyle\bullet}}
$\begin{array}{*{12}{c@{\hspace{1pt}}}}
&\scirc&&\scirc&&\scirc&&\sbullet\\[-6pt]
&\sbullet&&\scirc&&\scirc&&\scirc\\[-6pt]
\sbullet&&\scirc&&\scirc&&\scirc\\[-6pt]
&\sbullet&&\scirc&&\scirc&&\scirc
\end{array}  \hspace{3mm}  \begin{array}{*{12}{c@{\hspace{1pt}}}}
&\sbullet&&\scirc&&\scirc&&\scirc\\[-6pt]
&\scirc&&\scirc&&\scirc&&\sbullet\\[-6pt]
\sbullet&&\scirc&&\scirc&&\scirc\\[-6pt]
&\sbullet&&\scirc&&\scirc&&\scirc
\end{array} \hspace{3mm}  \begin{array}{*{12}{c@{\hspace{1pt}}}}
&\sbullet&&\scirc&&\scirc&&\scirc\\[-6pt]
&\sbullet&&\scirc&&\scirc&&\scirc\\[-6pt]
\sbullet&&\scirc&&\scirc&&\scirc\\[-6pt]
&\scirc&&\scirc&&\scirc&&\sbullet
\end{array}$& $\mathbb{A}_3\amalg\mathbb{A}_1$\\
\hline
(9)&\xymatrix{\circ\ar[r]\ar@/^/@{.}[rrr]_{}
&\circ\ar[r]&\circ\ar[r]&\circ
}& \tabincell{c}{\newcommand{\scirc}{{\scriptstyle \circ}}
\newcommand{\sbullet}{{\scriptstyle\bullet}}
$\begin{array}{*{10}{c@{\hspace{1pt}}}}
&\scirc&&\sbullet&&\scirc&&\sbullet\\[-5pt]
&\sbullet&&\scirc&&\sbullet&&\scirc\\[-5pt]
\scirc&&\scirc&&\scirc&&\scirc\\[-5pt]
&\scirc&&\scirc&&\scirc&&\scirc
\end{array}$\hspace{1mm}  $\begin{array}{*{10}{c@{\hspace{1pt}}}}
&\scirc&&\sbullet&&\scirc&&\sbullet\\[-5pt]
&\scirc&&\scirc&&\scirc&&\scirc\\[-5pt]
\scirc&&\scirc&&\scirc&&\scirc\\[-5pt]
&\sbullet&&\scirc&&\sbullet&&\scirc
\end{array}$ \hspace{1mm}  $\begin{array}{*{10}{c@{\hspace{1pt}}}}
&\sbullet&&\scirc&&\sbullet&&\scirc\\[-5pt]
&\scirc&&\sbullet&&\scirc&&\sbullet\\[-5pt]
\scirc&&\scirc&&\scirc&&\scirc\\[-5pt]
&\scirc&&\scirc&&\scirc&&\scirc
\end{array}$\\
\newcommand{\scirc}{{\scriptstyle \circ}}
\newcommand{\sbullet}{{\scriptstyle\bullet}}
$\begin{array}{*{10}{c@{\hspace{1pt}}}}
&\scirc&&\scirc&&\scirc&&\scirc\\[-5pt]
&\scirc&&\sbullet&&\scirc&&\sbullet\\[-5pt]
\scirc&&\scirc&&\scirc&&\scirc\\[-5pt]
&\sbullet&&\scirc&&\sbullet&&\scirc
\end{array}$\hspace{1mm} $\begin{array}{*{10}{c@{\hspace{1pt}}}}
&\sbullet&&\scirc&&\sbullet&&\scirc\\[-5pt]
&\scirc&&\scirc&&\scirc&&\scirc\\[-5pt]
\scirc&&\scirc&&\scirc&&\scirc\\[-5pt]
&\scirc&&\sbullet&&\scirc&&\sbullet
\end{array}$\hspace{1mm} $\begin{array}{*{10}{c@{\hspace{1pt}}}}
&\scirc&&\scirc&&\scirc&&\scirc\\[-5pt]
&\sbullet&&\scirc&&\sbullet&&\scirc\\[-5pt]
\scirc&&\scirc&&\scirc&&\scirc\\[-5pt]
&\scirc&&\sbullet&&\scirc&&\sbullet
\end{array}$}&
 $\mathbb{D}_4$ \\
\hline
(10)&\xymatrix{\circ\ar[r]&\circ~~~
\circ~~~\circ}&
\newcommand{\scirc}{{\scriptstyle \circ}}
\newcommand{\sbullet}{{\scriptstyle\bullet}}
$\begin{array}{*{12}{c@{\hspace{1pt}}}}
&\scirc&&\scirc&&\scirc&&\sbullet\\[-6pt]
&\scirc&&\scirc&&\scirc&&\sbullet\\[-6pt]
\sbullet&&\scirc&&\scirc&&\scirc\\[-6pt]
&\sbullet&&\scirc&&\scirc&&\scirc
\end{array} \hspace{3mm} \begin{array}{*{12}{c@{\hspace{1pt}}}}
&\scirc&&\scirc&&\scirc&&\sbullet\\[-6pt]
&\sbullet&&\scirc&&\scirc&&\scirc\\[-6pt]
\sbullet&&\scirc&&\scirc&&\scirc\\[-6pt]
&\scirc&&\scirc&&\scirc&&\sbullet
\end{array}  \hspace{3mm}  \begin{array}{*{12}{c@{\hspace{1pt}}}}
&\sbullet&&\scirc&&\scirc&&\scirc\\[-6pt]
&\scirc&&\scirc&&\scirc&&\sbullet\\[-6pt]
\sbullet&&\scirc&&\scirc&&\scirc\\[-6pt]
&\scirc&&\scirc&&\scirc&&\sbullet
\end{array}$&$\mathbb{A}_2\amalg\mathbb{A}_1\amalg\mathbb{A}_1$\\
\hline

(11)&\xymatrix{\circ~~~
\circ~~~\circ~~~\circ}&
\newcommand{\scirc}{{\scriptstyle \circ}}
\newcommand{\sbullet}{{\scriptstyle\bullet}}
$\begin{array}{*{12}{c@{\hspace{1pt}}}}
&\scirc&&\scirc&&\scirc&&\sbullet\\[-6pt]
&\scirc&&\scirc&&\scirc&&\sbullet\\[-6pt]
\sbullet&&\scirc&&\scirc&&\scirc\\[-6pt]
&\scirc&&\scirc&&\scirc&&\sbullet
\end{array}$&$\mathbb{A}_1\amalg\mathbb{A}_1\amalg\mathbb{A}_1\amalg\mathbb{A}_1$\\
\hline
\end{tabular}
\end{table}

%
%
%
%

To summarise, there are 11 silted algebras  of type $Q$, forming 4 families:
\begin{itemize}
\item[(\romannumeral1)] tilted algebras of type $\mathbb{D}_4$: $(1)-(7)$, $(9)$;
\item[(\romannumeral2)] tilted algebras of type $\mathbb{A}_3\coprod\mathbb{A}_1$: $(8)$;
\item[(\romannumeral3)] tilted algebras of type $\mathbb{A}_2\coprod\mathbb{A}_1\coprod\mathbb{A}_1$: $(10)$;
\item[(\romannumeral4)] $\mathbb{A}_1\coprod\mathbb{A}_1\coprod\mathbb{A}_1\coprod\mathbb{A}_1$:$(11)$.

\end{itemize}
\end{example}

\vspace{13pt}
\subsection*{3.3.2 Type  $\mathbb{D}_5$}

\paragraph{\indent Let $A$ be the path algebra of the quiver}
$$\xymatrix@C=8pt{&\stackrel{1}\circ\ar[rrd]&&\\
Q=&&&\stackrel{3}\circ\ar[rrr]&&&
\stackrel{4}\circ\ar[rrr]&&&\stackrel{5}\circ\\
&\stackrel{2}\circ\ar[rru]
}$$
According to \cite[Theorem 1]{ONFR}, there are 182 basic 2-term silting complexes, 77 of which are tilting modules. 
The AR-quiver of  $\text{K}^{[-1.0]}(\text{proj}A)$  is
$$\xymatrix@C=0.0001pt{&&&{\begin{smallmatrix} 1&&&\\&1&1&1\\0&&& \end{smallmatrix}}\ar@{..}[rr]\ar[rdd]&&
{\begin{smallmatrix} 0&&&\\&1&1&0\\1&&& \end{smallmatrix}}\ar@{..}[rr]\ar[rdd]&&
{\begin{smallmatrix} 1&&&\\&1&0&0\\0&&& \end{smallmatrix}}\ar@{..}[rr]\ar[rdd]&&
{\begin{smallmatrix} 0&&&\\&0&0&0\\1&&&
\end{smallmatrix}}\ar@{..}[rr]\ar[rdd]&&
{\begin{smallmatrix} 0&&&\\&1&1&1[1]\\1&&&
\end{smallmatrix}}\\
&&&{\begin{smallmatrix}0&&&\\&1&1&1\\1&&& \end{smallmatrix}}\ar@{..}[rr]\ar[rd]&&
{\begin{smallmatrix}1&&&\\&1&1&0\\0&&& \end{smallmatrix}}\ar@{..}[rr]\ar[rd]&&
{\begin{smallmatrix}0&&&\\&1&0&0\\1&&& \end{smallmatrix}}\ar@{..}[rr]\ar[rd]&&
{\begin{smallmatrix}1&&&\\&0&0&0\\0&&&
\end{smallmatrix}}\ar@{..}[rr]\ar[rd]&&
{\begin{smallmatrix}1&&&\\&1&1&1[1]\\0&&&
\end{smallmatrix}}\\
&&{\begin{smallmatrix}0&&&\\&1&1&1\\0&&& \end{smallmatrix}}\ar@{..}[rr]\ar[rd]\ar[ru]\ar[ruu]&&
{\begin{smallmatrix}1&&&\\&2&2&1\\1&&& \end{smallmatrix}}\ar@{..}[rr]\ar[rd]\ar[ru]\ar[ruu]&&
{\begin{smallmatrix}1&&&\\&2&1&0\\1&&& \end{smallmatrix}}\ar@{..}[rr]\ar[rd]\ar[ru]\ar[ruu]&&
{\begin{smallmatrix}1&&&\\&1&0&0\\1&&& \end{smallmatrix}}\ar[ru]\ar[ruu]\ar[rd]\ar@{..}[rr]&&
{\begin{smallmatrix}0&&&\\&1&1&1[1]\\0&&& \end{smallmatrix}}\ar[ruu]\ar[ru]\\
&{\begin{smallmatrix}0&&&\\&0&1&1\\0&&& \end{smallmatrix}}\ar@{..}[rr]\ar[rd]\ar[ru]&&
{\begin{smallmatrix}0&&&\\&1&1&0\\0&&& \end{smallmatrix}}\ar@{..}[rr]\ar[rd]\ar[ru]&&
{\begin{smallmatrix}1&&&\\&2&1&1\\1&&& \end{smallmatrix}}\ar@{..}[rr]\ar[rd]\ar[ru]&&
{\begin{smallmatrix}1&&&\\&1&1&0\\1&&& \end{smallmatrix}}\ar@{..}[rr]\ar[ru]\ar[rd]&&
{\begin{smallmatrix}0&&&\\&0&1&1[1]\\0&&& \end{smallmatrix}}\ar[ru]\\
{\begin{smallmatrix}0&&&\\&0&0&1\\0&&& \end{smallmatrix}}\ar@{..}[rr]\ar[ru]&&
{\begin{smallmatrix}0&&&\\&0&1&0\\0&&& \end{smallmatrix}}\ar@{..}[rr]\ar[ru]&&
{\begin{smallmatrix}0&&&\\&1&0&0\\0&&& \end{smallmatrix}}\ar@{..}[rr]\ar[ru]&&
{\begin{smallmatrix}1&&&\\&1&1&1\\1&&& \end{smallmatrix}}\ar[ru]\ar@{..}[rr]&&
{\begin{smallmatrix}0&&&\\&0&0&1[1]\\0&&& \end{smallmatrix}}\ar[ru]
}$$

(I)
Tilted algebras of type $Q$ are:

$\xymatrix@C=5pt{\circ\ar[rr]\ar@/^/@{.}[rrrrrr]&&
\circ\ar[rr]&&\circ\ar[rr]&&\circ&&\circ\ar[ll]
}$~~~\text{}~~~
$\xymatrix@C=5pt{\circ\ar[rr]\ar@/^/@{.}[rrrrrr]_{}&&
\circ\ar[rr]&&\circ\ar[rr]&&
\circ\ar[rr]&&\circ
}$~~~\text{}~~~
$\xymatrix@C=5pt{\circ\ar[rr]&&\circ\ar[rr]\ar[d]&&
\circ\ar[rr]&&\circ\\
&&\circ
}$

$\xymatrix@C=5pt{\circ\ar[rr]&&\circ\ar[rr]&&\circ
\ar[rr]&&\circ\\
&&\circ\ar[u]
}$~~~\text{}~~~
$\xymatrix@C=5pt{\circ\ar[rr]&&\circ\ar[rr]&&
\circ\ar[rr]\ar[d]&&\circ\\
&&&&\circ
}$~~~\text{}~~~
$\xymatrix@C=5pt{\circ\ar[rr]&&\circ\ar[rr]&&
\circ\ar[rr]&&\circ\\
&&&&\circ\ar[u]
}$

$\xymatrix@C=5pt{\circ\ar[rr]&&\circ\ar[rr]\ar[d]&&
\circ&&\circ\ar[ll]\\
&&\circ
}$~~~\text{}~~~
$\xymatrix@C=5pt{\circ\ar[rr]&&\circ\ar[rr]&&\circ
&&\circ\ar[ll]\\
&&\circ\ar[u]
}$~~~\text{}~~~
$\xymatrix@C=5pt{\circ\ar[rr]&&\circ\ar[rr]&&
\circ&&\circ\ar[ll]\\
&&&&\circ\ar[u]
}$

$\xymatrix@C=5pt{\circ\ar[rr]&&\circ\ar[d]&&
\circ\ar[rr]\ar[ll]&&\circ\\
&&\circ
}$~~~\text{}~~~
$\xymatrix@C=5pt{\circ\ar[rr]&&\circ&&\circ
\ar[rr]\ar[ll]&&\circ\\
&&\circ\ar[u]
}$~~~\text{}~~~
$\xymatrix@C=5pt{\circ\ar[rr]&&\circ&&\circ
\ar[ll]\ar[rr]\ar[d]&&\circ\\
&&&&\circ
}$

$\xymatrix@C=5pt{\circ&&\circ\ar[ll]\ar[rr]\ar[d]
&&\circ\ar[rr]&&\circ\\
&&\circ
}$~~~\text{}~~~
$\xymatrix@C=5pt{\circ&&\circ\ar[ll]\ar[rr]&&
\circ\ar[rr]\ar[d]&&\circ\\
&&&&\circ
}$~~~\text{}~~~
$\xymatrix@C=5pt{\circ\ar[rr]\ar@/^/@{.}[rrrrrr]_{}&&
\circ\ar[rr]\ar[d]&&
\circ\ar[rr]&&\circ\\
&&\circ
}$

$\xymatrix@C=5pt{\circ\ar[rr]\ar@/^/@{.}[rrrrrr]_{}&&
\circ\ar[rr]&&
\circ\ar[rr]&&\circ\\
&&\circ\ar[u]
}$~~~\text{}~~~
$\xymatrix@C=5pt{\circ\ar[rr]&&\circ\ar[rr]
\ar[d]\ar@/^/@{.}[rrrr]_{}&&
\circ\ar[rr]&&\circ\\
&&\circ
}$~~~\text{}~~~
$\xymatrix@C=5pt{\circ\ar[rr]&&
\circ\ar[rr]\ar@/^/@{.}[rrrr]_{}&&
\circ\ar[rr]&&\circ\\
&&\circ\ar[u]
}$

$\xymatrix@C=5pt{\circ&&\circ\ar[rr]
\ar[ll]\ar[d]\ar@/^/@{.}[rrrr]_{}&&
\circ\ar[rr]&&\circ\\
&&\circ
}$~~~\text{}~~~
$\xymatrix@C=5pt{\circ\ar[rr]\ar@/^/@{.}[rrrr]_{}&&
\circ\ar[rr]&&
\circ\ar[rr]&&\circ\\
&&\circ\ar[u]\ar@/_/@{.}[rru]_{}
}$~~~\text{}~~~
$\xymatrix@C=5pt{\circ\ar[rr]&&\circ&&
\circ\ar[ll]&&\circ\ar[ll]\ar@/_/@{.}[llll]_{}\\
&&&&\circ\ar[u]\ar@/^/@{.}[llu]_{}
}$

$\xymatrix@C=5pt{&&\circ\\
\circ\ar[rr]\ar@/_/@{.}[rrd]_{}&&\circ\ar[rr]\ar[u]
\ar[d]&&\circ\\
&&\circ
}$~~~\text{}~~~
$\xymatrix@C=5pt{&&\circ\\
\circ\ar[rr]\ar@/^/@{.}[rru]_{}&&\circ\ar[u]\ar[rr]
&&\circ\\
&&\circ\ar[u]
}$~~~\text{}~~~
$\xymatrix@C=5pt{&&\circ\\
\circ\ar[rr]\ar@/^/@{.}[rrrr]_{}&&\circ\ar[u]\ar[rr]
&&\circ\\
&&\circ\ar[u]\ar@/_/@{.}[rru]_{}
}$~~~\text{}~~~
$\xymatrix@C=5pt{&&\circ\ar[d]\ar@/^/@{.}[rrd]_{}\\
\circ\ar[rr]&&\circ\ar[rr]&&\circ\\
&&\circ\ar[u]\ar@/_/@{.}[rru]_{}
}$~~~\text{}~~~
$\xymatrix@C=5pt{&&\circ\ar[d]\\
\circ\ar[rr]&&\circ\ar[rr]&&\circ\\
&&\circ\ar[u]\ar@/_/@{.}[rru]_{}
}$

$\xymatrix@C=5pt{\circ\ar[rr]&&\circ\ar[rr]&&\circ\\
&&\circ\ar[rr]\ar[u]\ar@//@{.}[rru]_{}&&\circ\ar[u]
}$~~~\text{}~~~
$\xymatrix@C=5pt{\circ\ar[rr]&&\circ&&\circ\ar[ll]\\
\circ\ar[u]\ar[rr]\ar@//@{.}[rru]_{}&&\circ\ar[u]
}$
~~~\text{}~~~
$\xymatrix@C=5pt{\circ\ar[rr]&&\circ\ar[rr]&&\circ\\
\circ\ar[u]\ar[rr]\ar@//@{.}[rru]_{}&&\circ\ar[u]
}$~~\text{}~~~
$\xymatrix@C=5pt{&&\circ\ar[rr]&&\circ\\
\circ&&\circ\ar[ll]\ar[u]\ar[rr]\ar@//@{.}[rru]_{}&&\circ\ar[u]
}$~~\text{}~~~
$\xymatrix@C=5pt{\circ\ar[rr]&&\circ\ar[rr]&&\\
\circ\ar[u]\ar[rr]\ar@//@{.}[rru]_{}
&&\circ\ar[u]\ar@/_/@{.}[rru]_{}}$

$\xymatrix@C=5pt{\circ&&\circ\ar[ll]\ar[rr]&&\circ\\
&&\circ\ar[u]\ar[rr]\ar@//@{.}[rru]_{}&&\circ\ar[u]
}$~~~\text{}~~~
$\xymatrix@C=5pt{\circ\ar[rrrr]&&&&\circ\\
\circ\ar[u]\ar[rr]\ar@//@{.}[rrrru]_{}&&\circ\ar[rr]&&\circ\ar[u]
}$
~~~\text{}~~~
$\xymatrix@C=5pt{\circ&&\circ\ar[ll]\ar[rr]&&\circ\\
&&\circ\ar[u]\ar[rr]\ar@/^/@{.}[llu]_{}\ar@//@{.}[rru]_{}&&\circ\ar[u]
}$~~~\text{}~~~
$\xymatrix@C=5pt{&&\circ\ar[rr]&&\circ\\
\circ\ar[rr]&&\circ\ar[u]\ar[rr]
\ar@//@{.}[rru]_{}&&\circ\ar[u]}$

\smallskip
(II) Silted algebras of the form $\text{End}_{\text{K}^b(\text{proj}A)}(M\oplus P[1])$ ($P\neq 0,~M\neq 0$) are

\newcommand{\scirc}{{\scriptstyle \circ}}
\newcommand{\sbullet}{{\scriptstyle\bullet}}
\vspace{-20pt}
\[
\begin{tabular}{|c|c|c|c|}
 \hline
No. &
silted algebras & 2-term silting complexes & tilted type\\
\hline
(1)&
\xymatrix@C=3pt{\circ\ar[rr]\ar@/^/@{.}[rrrrrrrr]_{}&&
\circ\ar[rr]&&\circ\ar[rr]&&
\circ\ar[rr]&&\circ
} & $\begin{array}{*{12}{c@{\hspace{0.3pt}}}}
&&&\scirc&&\sbullet&&\scirc&&\sbullet&&\scirc\\[-5pt]
&&&\sbullet&&\scirc&&\sbullet&&\scirc&&\sbullet\\[-5pt]
&&\scirc&&\scirc&&\scirc&&\scirc&&\scirc\\[-5pt]
&\scirc&&\scirc&&\scirc&&\scirc&&\scirc\\[-5pt]
\scirc&&\scirc&&\scirc&&\scirc&&\scirc
\end{array}
\hspace{0.3mm}
\begin{array}{*{12}{c@{\hspace{0.3pt}}}}
&&&\sbullet&&\scirc&&\sbullet&&\scirc&&\sbullet\\[-5pt]
&&&\scirc&&\sbullet&&\scirc&&\sbullet&&\scirc\\[-5pt]
&&\scirc&&\scirc&&\scirc&&\scirc&&\scirc\\[-5pt]
&\scirc&&\scirc&&\scirc&&\scirc&&\scirc\\[-5pt]
\scirc&&\scirc&&\scirc&&\scirc&&\scirc
\end{array} $ & $\mathbb{D}_5$ \\
\hline
(2)&
\xymatrix@C=3pt{\circ\ar[rr]\ar@/^/@{.}[rrrrrr]_{}&&
\circ\ar[rr]&&\circ\ar[rr]\ar@/^/@{.}[rrrr]_{}&&
\circ\ar[rr]&&\circ
} &
$\begin{array}{*{12}{c@{\hspace{0.3pt}}}}
&&&\scirc&&\scirc&&\scirc&&\sbullet&&\scirc\\[-5pt]
&&&\sbullet&&\scirc&&\sbullet&&\scirc&&\sbullet\\[-5pt]
&&\scirc&&\scirc&&\scirc&&\scirc&&\scirc\\[-5pt]
&\scirc&&\scirc&&\scirc&&\scirc&&\scirc\\[-5pt]
\sbullet&&\scirc&&\scirc&&\scirc&&\scirc
\end{array}
\hspace{0.3mm}
\begin{array}{*{12}{c@{\hspace{0.3pt}}}}
&&&\sbullet&&\scirc&&\sbullet&&\scirc&&\sbullet\\[-5pt]
&&&\scirc&&\scirc&&\scirc&&\sbullet&&\scirc\\[-5pt]
&&\scirc&&\scirc&&\scirc&&\scirc&&\scirc\\[-5pt]
&\scirc&&\scirc&&\scirc&&\scirc&&\scirc\\[-5pt]
\sbullet&&\scirc&&\scirc&&\scirc&&\scirc
\end{array}$ & strictly shod\\
\hline
(3)&
\xymatrix@C=3pt{\circ\ar[rr]\ar@/^/@{.}[rrrr]&&
\circ\ar[rr]\ar@/^/@{.}[rrrr]&&\circ\ar[rr]&&
\circ&&\circ\ar[ll]
} &
$\begin{array}{*{12}{c@{\hspace{0.3pt}}}}
&&&\scirc&&\scirc&&\scirc&&\sbullet&&\scirc\\[-5pt]
&&&\sbullet&&\scirc&&\scirc&&\scirc&&\sbullet\\[-5pt]
&&\scirc&&\scirc&&\scirc&&\scirc&&\scirc\\[-5pt]
&\sbullet&&\scirc&&\scirc&&\scirc&&\scirc\\[-5pt]
\scirc&&\sbullet&&\scirc&&\scirc&&\scirc
\end{array}
\hspace{0.3mm}
\begin{array}{*{12}{c@{\hspace{0.3pt}}}}
&&&\sbullet&&\scirc&&\scirc&&\scirc&&\sbullet\\[-5pt]
&&&\scirc&&\scirc&&\scirc&&\sbullet&&\scirc\\[-5pt]
&&\scirc&&\scirc&&\scirc&&\scirc&&\scirc\\[-5pt]
&\sbullet&&\scirc&&\scirc&&\scirc&&\scirc\\[-5pt]
\scirc&&\sbullet&&\scirc&&\scirc&&\scirc
\end{array}$ & strictly shod\\
\hline
\end{tabular}
\]

\[
\begin{tabular}{|c|c|c|c|}
 \hline
No. &
silted algebras & 2-term silting complexes & tilted type\\
\hline
(4)&
\xymatrix@C=3pt{\circ\ar[rr]\ar@/^/@{.}[rrrr]&&
\circ\ar[rr]\ar@/^/@{.}[rrrr]&&\circ\ar[rr]&&
\circ\ar[rr]&&\circ
}&$\begin{array}{*{12}{c@{\hspace{0.3pt}}}}
&&&\scirc&&\scirc&&\scirc&&\sbullet&&\scirc\\[-5pt]
&&&\sbullet&&\scirc&&\scirc&&\scirc&&\sbullet\\[-5pt]
&&\scirc&&\scirc&&\scirc&&\scirc&&\scirc\\[-5pt]
&\sbullet&&\scirc&&\scirc&&\scirc&&\scirc\\[-5pt]
\sbullet&&\scirc&&\scirc&&\scirc&&\scirc
\end{array}
\hspace{0.3mm}
\begin{array}{*{12}{c@{\hspace{0.3pt}}}}
&&&\sbullet&&\scirc&&\scirc&&\scirc&&\sbullet\\[-5pt]
&&&\scirc&&\scirc&&\scirc&&\sbullet&&\scirc\\[-5pt]
&&\scirc&&\scirc&&\scirc&&\scirc&&\scirc\\[-5pt]
&\sbullet&&\scirc&&\scirc&&\scirc&&\scirc\\[-5pt]
\sbullet&&\scirc&&\scirc&&\scirc&&\scirc
\end{array}$&  strictly shod\\
\hline
(5)&
\xymatrix@C=3pt{\circ\ar[rr]\ar@/^/@{.}[rrrrrr]_{}&&
\circ\ar[rr]&&\circ\ar[rr]&&
\circ~~~\circ
}&
$\begin{array}{*{12}{c@{\hspace{0.3pt}}}}
&&&\scirc&&\scirc&&\scirc&&\scirc&&\scirc\\[-5pt]
&&&\scirc&&\scirc&&\sbullet&&\scirc&&\sbullet\\[-5pt]
&&\scirc&&\scirc&&\scirc&&\scirc&&\scirc\\[-5pt]
&\scirc&&\scirc&&\scirc&&\scirc&&\sbullet\\[-5pt]
\sbullet&&\scirc&&\sbullet&&\scirc&&\scirc
\end{array}
\hspace{0.3mm}
\begin{array}{*{12}{c@{\hspace{0.3pt}}}}
&&&\scirc&&\scirc&&\sbullet&&\scirc&&\sbullet\\[-5pt]
&&&\scirc&&\scirc&&\scirc&&\scirc&&\scirc\\[-5pt]
&&\scirc&&\scirc&&\scirc&&\scirc&&\scirc\\[-5pt]
&\scirc&&\scirc&&\scirc&&\scirc&&\sbullet\\[-5pt]
\sbullet&&\scirc&&\sbullet&&\scirc&&\scirc
\end{array}$& $\mathbb{D}_4\amalg\mathbb{A}_1$ \\
\hline
(6)&
\xymatrix@C=3pt{\circ\ar[rr]&&\circ\ar[rr]&&
\circ\ar[rr]&&\circ~~~\circ
}&
$\begin{array}{*{12}{c@{\hspace{0.3pt}}}}
&&&\scirc&&\scirc&&\scirc&&\scirc&&\scirc\\[-5pt]
&&&\sbullet&&\scirc&&\scirc&&\scirc&&\sbullet\\[-5pt]
&&\sbullet&&\scirc&&\scirc&&\scirc&&\scirc\\[-5pt]
&\sbullet&&\scirc&&\scirc&&\scirc&&\scirc\\[-5pt]
\sbullet&&\scirc&&\scirc&&\scirc&&\scirc
\end{array}
\hspace{0.3mm}
\begin{array}{*{12}{c@{\hspace{0.3pt}}}}
&&&\sbullet&&\scirc&&\scirc&&\scirc&&\sbullet\\[-5pt]
&&&\scirc&&\scirc&&\scirc&&\scirc&&\scirc\\[-5pt]
&&\sbullet&&\scirc&&\scirc&&\scirc&&\scirc\\[-5pt]
&\sbullet&&\scirc&&\scirc&&\scirc&&\scirc\\[-5pt]
\sbullet&&\scirc&&\scirc&&\scirc&&\scirc
\end{array}$& $\mathbb{A}_4\amalg\mathbb{A}_1$\\
\hline
(7)&
\xymatrix@C=3pt{\circ\ar[rr]&&\circ\ar[rr]&&
\circ&&\circ\ar[ll]~~~\circ
}&
$\begin{array}{*{12}{c@{\hspace{0.3pt}}}}
&&&\scirc&&\scirc&&\scirc&&\scirc&&\scirc\\[-5pt]
&&&\sbullet&&\scirc&&\scirc&&\scirc&&\sbullet\\[-5pt]
&&\sbullet&&\scirc&&\scirc&&\scirc&&\scirc\\[-5pt]
&\scirc&&\sbullet&&\scirc&&\scirc&&\scirc\\[-5pt]
\scirc&&\scirc&&\sbullet&&\scirc&&\scirc
\end{array}
\hspace{0.3mm}
\begin{array}{*{12}{c@{\hspace{0.3pt}}}}
&&&\scirc&&\scirc&&\scirc&&\scirc&&\scirc\\[-5pt]
&&&\sbullet&&\scirc&&\scirc&&\scirc&&\sbullet\\[-5pt]
&&\sbullet&&\scirc&&\scirc&&\scirc&&\scirc\\[-5pt]
&\sbullet&&\scirc&&\scirc&&\scirc&&\scirc\\[-5pt]
\scirc&&\sbullet&&\scirc&&\scirc&&\scirc
\end{array}
\hspace{0.3mm}
\begin{array}{*{12}{c@{\hspace{0.3pt}}}}
&&&\sbullet&&\scirc&&\scirc&&\scirc&&\sbullet\\[-5pt]
&&&\scirc&&\scirc&&\scirc&&\scirc&&\scirc\\[-5pt]
&&\sbullet&&\scirc&&\scirc&&\scirc&&\scirc\\[-5pt]
&\scirc&&\sbullet&&\scirc&&\scirc&&\scirc\\[-5pt]
\scirc&&\scirc&&\sbullet&&\scirc&&\scirc
\end{array}
\hspace{0.3mm}
\begin{array}{*{12}{c@{\hspace{0.3pt}}}}
&&&\sbullet&&\scirc&&\scirc&&\scirc&&\sbullet\\[-5pt]
&&&\scirc&&\scirc&&\scirc&&\scirc&&\scirc\\[-5pt]
&&\sbullet&&\scirc&&\scirc&&\scirc&&\scirc\\[-5pt]
&\sbullet&&\scirc&&\scirc&&\scirc&&\scirc\\[-5pt]
\scirc&&\sbullet&&\scirc&&\scirc&&\scirc
\end{array}$&$\mathbb{A}_4\amalg\mathbb{A}_1$\\
\hline
(8)&
\xymatrix@C=5pt{\circ\ar[rr]&&\circ&&\circ\ar[ll]
\ar[rr]&&\circ~~~\circ
}&
$\begin{array}{*{12}{c@{\hspace{0.3pt}}}}
&&&\scirc&&\scirc&&\scirc&&\scirc&&\scirc\\[-5pt]
&&&\sbullet&&\scirc&&\scirc&&\scirc&&\sbullet\\[-5pt]
&&\sbullet&&\scirc&&\scirc&&\scirc&&\scirc\\[-5pt]
&\scirc&&\sbullet&&\scirc&&\scirc&&\scirc\\[-5pt]
\scirc&&\sbullet&&\scirc&&\scirc&&\scirc
\end{array}
\hspace{0.3mm}
\begin{array}{*{12}{c@{\hspace{0.3pt}}}}
&&&\sbullet&&\scirc&&\scirc&&\scirc&&\sbullet\\[-5pt]
&&&\scirc&&\scirc&&\scirc&&\scirc&&\scirc\\[-5pt]
&&\sbullet&&\scirc&&\scirc&&\scirc&&\scirc\\[-5pt]
&\scirc&&\sbullet&&\scirc&&\scirc&&\scirc\\[-5pt]
\scirc&&\sbullet&&\scirc&&\scirc&&\scirc
\end{array}$&$\mathbb{A}_4\amalg\mathbb{A}_1$\\
\hline
(9)&
\xymatrix@C=5pt{\circ\ar[rr]&&\circ~~~
\circ\ar[rr]&&\circ\ar[rr]&&\circ
}&
$\begin{array}{*{12}{c@{\hspace{0.3pt}}}}
&&&\scirc&&\scirc&&\scirc&&\sbullet&&\scirc\\[-5pt]
&&&\scirc&&\scirc&&\scirc&&\scirc&&\sbullet\\[-5pt]
&&\scirc&&\scirc&&\scirc&&\scirc&&\sbullet\\[-5pt]
&\sbullet&&\scirc&&\scirc&&\scirc&&\scirc\\[-5pt]
\scirc&&\sbullet&&\scirc&&\scirc&&\scirc
\end{array}
\hspace{0.3mm}
\begin{array}{*{12}{c@{\hspace{0.3pt}}}}
&&&\scirc&&\scirc&&\scirc&&\sbullet&&\scirc\\[-5pt]
&&&\scirc&&\scirc&&\scirc&&\scirc&&\sbullet\\[-5pt]
&&\scirc&&\scirc&&\scirc&&\scirc&&\sbullet\\[-5pt]
&\sbullet&&\scirc&&\scirc&&\scirc&&\scirc\\[-5pt]
\sbullet&&\scirc&&\scirc&&\scirc&&\scirc
\end{array}
\hspace{0.3mm}
\begin{array}{*{12}{c@{\hspace{0.3pt}}}}
&&&\scirc&&\scirc&&\scirc&&\scirc&&\sbullet\\[-5pt]
&&&\scirc&&\scirc&&\scirc&&\sbullet&&\scirc\\[-5pt]
&&\scirc&&\scirc&&\scirc&&\scirc&&\sbullet\\[-5pt]
&\sbullet&&\scirc&&\scirc&&\scirc&&\scirc\\[-5pt]
\scirc&&\sbullet&&\scirc&&\scirc&&\scirc
\end{array}
\hspace{0.3mm}
\begin{array}{*{12}{c@{\hspace{0.3pt}}}}
&&&\scirc&&\scirc&&\scirc&&\scirc&&\sbullet\\[-5pt]
&&&\scirc&&\scirc&&\scirc&&\sbullet&&\scirc\\[-5pt]
&&\scirc&&\scirc&&\scirc&&\scirc&&\sbullet\\[-5pt]
&\sbullet&&\scirc&&\scirc&&\scirc&&\scirc\\[-5pt]
\sbullet&&\scirc&&\scirc&&\scirc&&\scirc
\end{array}$&$\mathbb{A}_3\amalg\mathbb{A}_2$\\
\hline
(10)&
\xymatrix@C=5pt{\circ\ar[rr]&&\circ~~~
\circ\ar[rr]&&\circ&&\circ\ar[ll]
}&
$\begin{array}{*{12}{c@{\hspace{0.3pt}}}}
&&&\scirc&&\scirc&&\scirc&&\scirc&&\sbullet\\[-5pt]
&&&\scirc&&\scirc&&\scirc&&\scirc&&\sbullet\\[-5pt]
&&\scirc&&\scirc&&\scirc&&\scirc&&\sbullet\\[-5pt]
&\sbullet&&\scirc&&\scirc&&\scirc&&\scirc\\[-5pt]
\scirc&&\sbullet&&\scirc&&\scirc&&\scirc
\end{array}
\hspace{0.3mm}
\begin{array}{*{12}{c@{\hspace{0.3pt}}}}
&&&\scirc&&\scirc&&\scirc&&\scirc&&\sbullet\\[-5pt]
&&&\scirc&&\scirc&&\scirc&&\scirc&&\sbullet\\[-5pt]
&&\scirc&&\scirc&&\scirc&&\scirc&&\sbullet\\[-5pt]
&\sbullet&&\scirc&&\scirc&&\scirc&&\scirc\\[-5pt]
\sbullet&&\scirc&&\scirc&&\scirc&&\scirc
\end{array}$&$\mathbb{A}_3\amalg\mathbb{A}_2$\\
\hline
(11)&
\xymatrix@C=5pt{\circ\ar[rr]&&\circ~~~
\circ&&\circ\ar[ll]\ar[rr]&&\circ
}&
$\begin{array}{*{12}{c@{\hspace{0.3pt}}}}
&&&\scirc&&\scirc&&\scirc&&\sbullet&&\scirc\\[-5pt]
&&&\scirc&&\scirc&&\scirc&&\sbullet&&\scirc\\[-5pt]
&&\scirc&&\scirc&&\scirc&&\scirc&&\sbullet\\[-5pt]
&\sbullet&&\scirc&&\scirc&&\scirc&&\scirc\\[-5pt]
\scirc&&\sbullet&&\scirc&&\scirc&&\scirc
\end{array}
\hspace{0.3mm}
\begin{array}{*{12}{c@{\hspace{0.3pt}}}}
&&&\scirc&&\scirc&&\scirc&&\sbullet&&\scirc\\[-5pt]
&&&\scirc&&\scirc&&\scirc&&\sbullet&&\scirc\\[-5pt]
&&\scirc&&\scirc&&\scirc&&\scirc&&\sbullet\\[-5pt]
&\sbullet&&\scirc&&\scirc&&\scirc&&\scirc\\[-5pt]
\sbullet&&\scirc&&\scirc&&\scirc&&\scirc
\end{array}$&$\mathbb{A}_3\amalg\mathbb{A}_2$\\
\hline
(12)&
\xymatrix@C=5pt{\circ\ar[rr]&&\circ\ar[rr]
&&\circ~~~\circ~~~\circ
}&
$\begin{array}{*{12}{c@{\hspace{0.3pt}}}}
&&&\scirc&&\scirc&&\scirc&&\scirc&&\sbullet\\[-5pt]
&&&\scirc&&\scirc&&\scirc&&\scirc&&\sbullet\\[-5pt]
&&\sbullet&&\scirc&&\scirc&&\scirc&&\scirc\\[-5pt]
&\scirc&&\sbullet&&\scirc&&\scirc&&\scirc\\[-5pt]
\scirc&&\scirc&&\sbullet&&\scirc&&\scirc
\end{array}
\hspace{0.3mm}
\begin{array}{*{12}{c@{\hspace{0.3pt}}}}
&&&\scirc&&\scirc&&\scirc&&\scirc&&\sbullet\\[-5pt]
&&&\scirc&&\scirc&&\scirc&&\scirc&&\sbullet\\[-5pt]
&&\sbullet&&\scirc&&\scirc&&\scirc&&\scirc\\[-5pt]
&\sbullet&&\scirc&&\scirc&&\scirc&&\scirc\\[-5pt]
\sbullet&&\scirc&&\scirc&&\scirc&&\scirc
\end{array}$&$\mathbb{A}_3\amalg\mathbb{A}_1
\amalg\mathbb{A}_1$\\
\hline
(13)&
\xymatrix@C=5pt{\circ&&\circ\ar[ll]\ar[rr]&&\circ
~~~\circ~~~\circ
}&
$\begin{array}{*{12}{c@{\hspace{0.3pt}}}}
&&&\scirc&&\scirc&&\scirc&&\scirc&&\sbullet\\[-5pt]
&&&\scirc&&\scirc&&\scirc&&\scirc&&\sbullet\\[-5pt]
&&\sbullet&&\scirc&&\scirc&&\scirc&&\scirc\\[-5pt]
&\scirc&&\sbullet&&\scirc&&\scirc&&\scirc\\[-5pt]
\scirc&&\sbullet&&\scirc&&\scirc&&\scirc
\end{array}$&$\mathbb{A}_3\amalg\mathbb{A}_1
\amalg\mathbb{A}_1$\\
\hline
(14)&
\xymatrix@C=5pt{\circ\ar[rr]&&\circ&&\circ\ar[ll]
~~~\circ~~~\circ
}&
$\begin{array}{*{12}{c@{\hspace{0.3pt}}}}
&&&\scirc&&\scirc&&\scirc&&\scirc&&\sbullet\\[-5pt]
&&&\scirc&&\scirc&&\scirc&&\scirc&&\sbullet\\[-5pt]
&&\sbullet&&\scirc&&\scirc&&\scirc&&\scirc\\[-5pt]
&\sbullet&&\scirc&&\scirc&&\scirc&&\scirc\\[-5pt]
\scirc&&\sbullet&&\scirc&&\scirc&&\scirc
\end{array}$&$\mathbb{A}_3\amalg\mathbb{A}_1
\amalg\mathbb{A}_1$\\
\hline
\end{tabular}
\]

\[\begin{tabular}{|c|c|c|c|}
 \hline
No. &
silted algebras & 2-term silting complexes & tilted type\\
\hline
(15)&
\xymatrix@C=5pt{\circ\ar[rr]\ar@/^/@{.}[rrrr]_{}&&
\circ\ar[rr]&&\circ&~~~\circ~~~\circ
}&
$\begin{array}{*{12}{c@{\hspace{0.3pt}}}}
&&&\scirc&&\scirc&&\scirc&&\scirc&&\sbullet\\[-5pt]
&&&\scirc&&\scirc&&\scirc&&\scirc&&\sbullet\\[-5pt]
&&\sbullet&&\scirc&&\scirc&&\scirc&&\scirc\\[-5pt]
&\scirc&&\scirc&&\scirc&&\scirc&&\scirc\\[-5pt]
\sbullet&&\scirc&&\sbullet&&\scirc&&\scirc
\end{array}$&$\mathbb{A}_3\amalg\mathbb{A}_1
\amalg\mathbb{A}_1$\\
\hline
(16)&
\makecell[c]{\xymatrix@C=5pt{\circ\ar[rr]\ar@/_/@{.}[rrd]_{}
\ar@/^/@{.}[rrrrrr]_{}&&
\circ\ar[rr]\ar[d]&&
\circ\ar[rr]&&\circ\\
&&\circ
}}&
\makecell[c]{$\begin{array}{*{12}{c@{\hspace{0.3pt}}}}
&&&\scirc&&\sbullet&&\scirc&&\scirc&&\scirc\\[-5pt]
&&&\sbullet&&\scirc&&\sbullet&&\scirc&&\sbullet\\[-5pt]
&&\scirc&&\scirc&&\scirc&&\scirc&&\scirc\\[-5pt]
&\scirc&&\scirc&&\scirc&&\scirc&&\scirc\\[-5pt]
\scirc&&\scirc&&\sbullet&&\scirc&&\scirc
\end{array}
\hspace{0.3mm}
\begin{array}{*{12}{c@{\hspace{0.3pt}}}}
&&&\sbullet&&\scirc&&\sbullet&&\scirc&&\sbullet\\[-5pt]
&&&\scirc&&\sbullet&&\scirc&&\scirc&&\scirc\\[-5pt]
&&\scirc&&\scirc&&\scirc&&\scirc&&\scirc\\[-5pt]
&\scirc&&\scirc&&\scirc&&\scirc&&\scirc\\[-5pt]
\scirc&&\scirc&&\sbullet&&\scirc&&\scirc
\end{array}$} & $\mathbb{D}_5$ \\
\hline
(17)&
\makecell[c]{\xymatrix@C=5pt{\circ\ar[rr]
\ar@/^/@{.}[rrrrrr]_{}&&
\circ\ar[rr]\ar@/_/@{.}[rrd]_{}&&
\circ\ar[rr]\ar[d]&&\circ\\
&&&&\circ
}}&
\makecell[c]{$\begin{array}{*{12}{c@{\hspace{0.3pt}}}}
&&&\scirc&&\sbullet&&\scirc&&\sbullet&&\scirc\\[-5pt]
&&&\sbullet&&\scirc&&\scirc&&\scirc&&\sbullet\\[-5pt]
&&\scirc&&\scirc&&\scirc&&\scirc&&\scirc\\[-5pt]
&\scirc&&\scirc&&\scirc&&\scirc&&\scirc\\[-5pt]
\scirc&&\sbullet&&\scirc&&\scirc&&\scirc
\end{array}
\hspace{0.3mm}
\begin{array}{*{12}{c@{\hspace{0.3pt}}}}
&&&\sbullet&&\scirc&&\scirc&&\scirc&&\sbullet\\[-5pt]
&&&\scirc&&\sbullet&&\scirc&&\sbullet&&\scirc\\[-5pt]
&&\scirc&&\scirc&&\scirc&&\scirc&&\scirc\\[-5pt]
&\scirc&&\scirc&&\scirc&&\scirc&&\scirc\\[-5pt]
\scirc&&\sbullet&&\scirc&&\scirc&&\scirc
\end{array}$} & $\mathbb{D}_5$ \\
\hline
(18)&
\makecell[c]{\xymatrix@C=5pt{\circ\ar[rr]&&\circ&&
\circ\ar[ll]\ar[rr]&&\circ\\
&&&&\circ\ar[u]\ar@/_/@{.}[rru]_{}
\ar@/^/@{.}[llu]_{}
}}&
\makecell[c]{$\begin{array}{*{12}{c@{\hspace{0.3pt}}}}
&&&\scirc&&\sbullet&&\scirc&&\scirc&&\scirc\\[-5pt]
&&&\sbullet&&\scirc&&\scirc&&\scirc&&\sbullet\\[-5pt]
&&\scirc&&\scirc&&\scirc&&\scirc&&\scirc\\[-5pt]
&\scirc&&\sbullet&&\scirc&&\scirc&&\scirc\\[-5pt]
\scirc&&\scirc&&\sbullet&&\scirc&&\scirc
\end{array}
\hspace{0.3mm}
\begin{array}{*{12}{c@{\hspace{0.3pt}}}}
&&&\sbullet&&\scirc&&\scirc&&\scirc&&\sbullet\\[-5pt]
&&&\scirc&&\sbullet&&\scirc&&\scirc&&\scirc\\[-5pt]
&&\scirc&&\scirc&&\scirc&&\scirc&&\scirc\\[-5pt]
&\scirc&&\sbullet&&\scirc&&\scirc&&\scirc\\[-5pt]
\scirc&&\scirc&&\sbullet&&\scirc&&\scirc
\end{array}$}& $\mathbb{D}_5$ \\
\hline
(19)&
\makecell[c]{\xymatrix@C=5pt{\circ&&\circ\ar[ll]&&
\circ\ar[ll]\ar[rr]&&\circ\\
&&&&\circ\ar[u]\ar@/_/@{.}[rru]_{}
\ar@/^/@{.}[llu]_{}
}}&
$\begin{array}{*{12}{c@{\hspace{0.3pt}}}}
&&&\scirc&&\sbullet&&\scirc&&\scirc&&\scirc\\[-5pt]
&&&\sbullet&&\scirc&&\scirc&&\scirc&&\sbullet\\[-5pt]
&&\scirc&&\scirc&&\scirc&&\scirc&&\scirc\\[-5pt]
&\scirc&&\sbullet&&\scirc&&\scirc&&\scirc\\[-5pt]
\scirc&&\sbullet&&\scirc&&\scirc&&\scirc
\end{array}
\hspace{0.3mm}
\begin{array}{*{12}{c@{\hspace{0.3pt}}}}
&&&\sbullet&&\scirc&&\scirc&&\scirc&&\sbullet\\[-5pt]
&&&\scirc&&\sbullet&&\scirc&&\scirc&&\scirc\\[-5pt]
&&\scirc&&\scirc&&\scirc&&\scirc&&\scirc\\[-5pt]
&\scirc&&\sbullet&&\scirc&&\scirc&&\scirc\\[-5pt]
\scirc&&\sbullet&&\scirc&&\scirc&&\scirc
\end{array}$& $\mathbb{D}_5$ \\
\hline
(20)&
\makecell[c]{\xymatrix@C=5pt{\circ\ar[rr]\ar@/^/@{.}[rrrr]_{}
\ar@/_/@{.}[rrd]_{}&&\circ\ar[rr]
\ar[d]\ar@/^/@{.}[rrrr]_{}&&
\circ\ar[rr]&&\circ\\
&&\circ
}}&
$\begin{array}{*{12}{c@{\hspace{0.3pt}}}}
&&&\scirc&&\scirc&&\scirc&&\scirc&&\scirc\\[-5pt]
&&&\sbullet&&\scirc&&\sbullet&&\scirc&&\sbullet\\[-5pt]
&&\scirc&&\scirc&&\scirc&&\scirc&&\scirc\\[-5pt]
&\scirc&&\scirc&&\scirc&&\scirc&&\scirc\\[-5pt]
\sbullet&&\scirc&&\sbullet&&\scirc&&\scirc
\end{array}
\hspace{0.3mm}
\begin{array}{*{12}{c@{\hspace{0.3pt}}}}
&&&\sbullet&&\scirc&&\sbullet&&\scirc&&\sbullet\\[-5pt]
&&&\scirc&&\scirc&&\scirc&&\scirc&&\scirc\\[-5pt]
&&\scirc&&\scirc&&\scirc&&\scirc&&\scirc\\[-5pt]
&\scirc&&\scirc&&\scirc&&\scirc&&\scirc\\[-5pt]
\sbullet&&\scirc&&\sbullet&&\scirc&&\scirc
\end{array}$ & strictly shod\\
\hline
(21)&
\makecell[c]{\xymatrix@C=5pt{\circ\ar[rr]&&\circ\ar[rr]
\ar[d]&&\circ~~~\circ\\
&&\circ
}}&
$\begin{array}{*{12}{c@{\hspace{0.3pt}}}}
&&&\scirc&&\scirc&&\sbullet&&\scirc&&\scirc\\[-5pt]
&&&\scirc&&\scirc&&\sbullet&&\scirc&&\scirc\\[-5pt]
&&\scirc&&\scirc&&\scirc&&\sbullet&&\scirc\\[-5pt]
&\scirc&&\scirc&&\scirc&&\scirc&&\sbullet\\[-5pt]
\sbullet&&\scirc&&\scirc&&\scirc&&\scirc
\end{array}
\hspace{0.3mm}
\begin{array}{*{12}{c@{\hspace{0.3pt}}}}
&&&\scirc&&\scirc&&\scirc&&\sbullet&&\scirc\\[-5pt]
&&&\scirc&&\scirc&&\scirc&&\scirc&&\sbullet\\[-5pt]
&&\scirc&&\scirc&&\scirc&&\scirc&&\sbullet\\[-5pt]
&\scirc&&\scirc&&\scirc&&\scirc&&\sbullet\\[-5pt]
\sbullet&&\scirc&&\scirc&&\scirc&&\scirc
\end{array}
\hspace{0.3mm}
\begin{array}{*{12}{c@{\hspace{0.3pt}}}}
&&&\scirc&&\scirc&&\scirc&&\scirc&&\sbullet\\[-5pt]
&&&\scirc&&\scirc&&\scirc&&\sbullet&&\scirc\\[-5pt]
&&\scirc&&\scirc&&\scirc&&\scirc&&\sbullet\\[-5pt]
&\scirc&&\scirc&&\scirc&&\scirc&&\sbullet\\[-5pt]
\sbullet&&\scirc&&\scirc&&\scirc&&\scirc
\end{array}$&$\mathbb{D}_4
\amalg\mathbb{A}_1$\\
\hline
(22)&
\makecell[c]{\xymatrix@C=5pt{\circ\ar[rr]&&\circ\ar[rr]
&&\circ~~~\circ\\
&&\circ\ar[u]
}}&
$\begin{array}{*{12}{c@{\hspace{0.3pt}}}}
&&&\scirc&&\scirc&&\sbullet&&\scirc&&\scirc\\[-5pt]
&&&\scirc&&\scirc&&\scirc&&\sbullet&&\scirc\\[-5pt]
&&\scirc&&\scirc&&\scirc&&\sbullet&&\scirc\\[-5pt]
&\scirc&&\scirc&&\scirc&&\scirc&&\sbullet\\[-5pt]
\sbullet&&\scirc&&\scirc&&\scirc&&\scirc
\end{array}
\hspace{0.3mm}
\begin{array}{*{12}{c@{\hspace{0.3pt}}}}
&&&\scirc&&\scirc&&\scirc&&\sbullet&&\scirc\\[-5pt]
&&&\scirc&&\scirc&&\sbullet&&\scirc&&\scirc\\[-5pt]
&&\scirc&&\scirc&&\scirc&&\sbullet&&\scirc\\[-5pt]
&\scirc&&\scirc&&\scirc&&\scirc&&\sbullet\\[-5pt]
\sbullet&&\scirc&&\scirc&&\scirc&&\scirc
\end{array}
\hspace{0.3mm}
\begin{array}{*{12}{c@{\hspace{0.3pt}}}}
&&&\scirc&&\scirc&&\scirc&&\scirc&&\sbullet\\[-5pt]
&&&\scirc&&\scirc&&\scirc&&\scirc&&\sbullet\\[-5pt]
&&\scirc&&\scirc&&\scirc&&\scirc&&\sbullet\\[-5pt]
&\scirc&&\scirc&&\scirc&&\scirc&&\sbullet\\[-5pt]
\sbullet&&\scirc&&\scirc&&\scirc&&\scirc
\end{array}$&$\mathbb{D}_4
\amalg\mathbb{A}_1$\\
\hline
(23)&
\makecell[c]{\xymatrix@C=5pt{\circ\ar[rr]&&\circ&&\circ\ar[ll]
~~~\circ\\
&&\circ\ar[u]
}}&
$\begin{array}{*{12}{c@{\hspace{0.3pt}}}}
&&&\scirc&&\scirc&&\scirc&&\sbullet&&\scirc\\[-5pt]
&&&\scirc&&\scirc&&\scirc&&\sbullet&&\scirc\\[-5pt]
&&\scirc&&\scirc&&\scirc&&\sbullet&&\scirc\\[-5pt]
&\scirc&&\scirc&&\scirc&&\scirc&&\sbullet\\[-5pt]
\sbullet&&\scirc&&\scirc&&\scirc&&\scirc
\end{array}$&$\mathbb{D}_4
\amalg\mathbb{A}_1$\\
\hline
(24)&
\makecell[c]{\xymatrix@C=5pt{\circ&&\circ\ar[ll]\ar[rr]\ar[d]&&
\circ~~~\circ\\
&&\circ
}}&
$\begin{array}{*{12}{c@{\hspace{0.3pt}}}}
&&&\scirc&&\scirc&&\scirc&&\sbullet&&\scirc\\[-5pt]
&&&\scirc&&\scirc&&\scirc&&\sbullet&&\scirc\\[-5pt]
&&\scirc&&\scirc&&\scirc&&\scirc&&\sbullet\\[-5pt]
&\scirc&&\scirc&&\scirc&&\scirc&&\sbullet\\[-5pt]
\sbullet&&\scirc&&\scirc&&\scirc&&\scirc
\end{array}$&$\mathbb{D}_4
\amalg\mathbb{A}_1$\\
\hline
(25)&
\makecell[c]{\xymatrix@C=5pt{\circ\ar[rr]&&\circ\ar[rr]
&&\circ&~~~\circ\\
&&\circ\ar[u]\ar@/_/@{.}[rru]_{}
}}&
$\begin{array}{*{12}{c@{\hspace{0.3pt}}}}
&&&\scirc&&\scirc&&\scirc&&\scirc&&\scirc\\[-5pt]
&&&\sbullet&&\scirc&&\scirc&&\scirc&&\sbullet\\[-5pt]
&&\sbullet&&\scirc&&\scirc&&\scirc&&\scirc\\[-5pt]
&\scirc&&\scirc&&\scirc&&\scirc&&\scirc\\[-5pt]
\sbullet&&\scirc&&\sbullet&&\scirc&&\scirc
\end{array}
\hspace{0.3mm}
\begin{array}{*{12}{c@{\hspace{0.3pt}}}}
&&&\sbullet&&\scirc&&\scirc&&\scirc&&\sbullet\\[-5pt]
&&&\scirc&&\scirc&&\scirc&&\scirc&&\scirc\\[-5pt]
&&\sbullet&&\scirc&&\scirc&&\scirc&&\scirc\\[-5pt]
&\scirc&&\scirc&&\scirc&&\scirc&&\scirc\\[-5pt]
\sbullet&&\scirc&&\sbullet&&\scirc&&\scirc
\end{array}$&$\mathbb{D}_4
\amalg\mathbb{A}_1$\\
\hline
\end{tabular}
\]

\[\begin{tabular}{|c|c|c|c|}
 \hline
No. &
silted algebras & 2-term silting complexes & tilted type\\
\hline

(26)&
\makecell[c]{\xymatrix@C=5pt{\circ\ar[rr]\ar@/^/@{.}[rrrr]_{}
&&\circ\ar[rr]
&&\circ&~~~\circ\\
&&\circ\ar[u]\ar@/_/@{.}[rru]_{}
}}&
$\begin{array}{*{12}{c@{\hspace{0.3pt}}}}
&&&\scirc&&\scirc&&\scirc&&\scirc&&\sbullet\\[-5pt]
&&&\scirc&&\scirc&&\scirc&&\scirc&&\sbullet\\[-5pt]
&&\scirc&&\scirc&&\scirc&&\scirc&&\scirc\\[-5pt]
&\scirc&&\scirc&&\scirc&&\scirc&&\sbullet\\[-5pt]
\sbullet&&\scirc&&\sbullet&&\scirc&&\scirc
\end{array}$&$\mathbb{D}_4
\amalg\mathbb{A}_1$\\
\hline
(27)&
\makecell[c]{\xymatrix@C=5pt{\circ\ar[rr]&&\circ&~~~\circ\\
\circ\ar[u]\ar[rr]\ar@//@{.}[rru]_{}&&\circ\ar[u]
}}&
$\begin{array}{*{12}{c@{\hspace{0.3pt}}}}
&&&\scirc&&\scirc&&\sbullet&&\scirc&&\scirc\\[-5pt]
&&&\scirc&&\scirc&&\sbullet&&\scirc&&\scirc\\[-5pt]
&&\scirc&&\scirc&&\scirc&&\scirc&&\scirc\\[-5pt]
&\scirc&&\scirc&&\scirc&&\scirc&&\sbullet\\[-5pt]
\sbullet&&\scirc&&\sbullet&&\scirc&&\scirc
\end{array}
\hspace{0.3mm}
\begin{array}{*{12}{c@{\hspace{0.3pt}}}}
&&&\scirc&&\scirc&&\scirc&&\sbullet&&\scirc\\[-5pt]
&&&\scirc&&\scirc&&\sbullet&&\scirc&&\sbullet\\[-5pt]
&&\scirc&&\scirc&&\scirc&&\scirc&&\scirc\\[-5pt]
&\scirc&&\scirc&&\scirc&&\scirc&&\sbullet\\[-5pt]
\sbullet&&\scirc&&\scirc&&\scirc&&\scirc
\end{array}
\hspace{0.3mm}
\begin{array}{*{12}{c@{\hspace{0.3pt}}}}
&&&\scirc&&\scirc&&\sbullet&&\scirc&&\sbullet\\[-5pt]
&&&\scirc&&\scirc&&\scirc&&\sbullet&&\scirc\\[-5pt]
&&\scirc&&\scirc&&\scirc&&\scirc&&\scirc\\[-5pt]
&\scirc&&\scirc&&\scirc&&\scirc&&\sbullet\\[-5pt]
\sbullet&&\scirc&&\scirc&&\scirc&&\scirc
\end{array}$&$\mathbb{D}_4
\amalg\mathbb{A}_1$\\
\hline
\end{tabular}
\]

To summarise, there are 62 silted algebras of type $Q$, forming 6 families:
\begin{itemize}
\item[(\romannumeral1)] tilted algebras of type $\mathbb{D}_5$:
$\rm(I)$, $(1)$,
 $(16)-(19)$;

\item[(\romannumeral2)] tilted algebras of type $\mathbb{D}_4\amalg\mathbb{A}_1$: $(5)$, $(21)-(26)$, $(27)$;

\item[(\romannumeral3)] tilted algebras of type $\mathbb{A}_4\amalg\mathbb{A}_1$: $(6)$, $(7)$, $(8)$;

\item[(\romannumeral4)] tilted algebras of type $\mathbb{A}_3\amalg\mathbb{A}_2$: $(9)$, $(10)$, $(11)$;

\item[(\romannumeral5)] tilted algebras of type $\mathbb{A}_3\amalg\mathbb{A}_1\amalg\mathbb{A}_1$: $(12)-(15)$;

\item[(\romannumeral6)]  strictly shod algebras:
    $(2)$, $(3)$, $(4)$, $(20)$.
\end{itemize}




\def\cprime{$'$}
\providecommand{\bysame}{\leavevmode\hbox to3em{\hrulefill}\thinspace}
\providecommand{\MR}{\relax\ifhmode\unskip\space\fi MR }
\providecommand{\MRhref}[2]{%
  \href{http://www.ams.org/mathscinet-getitem?mr=#1}{#2}
}
\providecommand{\href}[2]{#2}

\end{document}